\newcommand{\G}{\mathcal{G}}
\newcommand{\snc}[2]{S_{NC}[#2;#1]}
\newcommand{\intpart}{\zeta}
\newcommand{\cyc}[1]{\# (#1)}
\newcommand{\ip}[1]{ |#1|}
\newcommand{\blocks}[1]{\# #1}
\newcommand{\partlat}{P}
\newcommand{\nc}{NC}
\newcommand{\freec}{\kappa}
\newcommand{\herm}{\hat{H}}
\newcommand{\lag}{\hat{L}}
\newcommand{\p}{r}
\newcommand{\q}{s}
\newcommand{\calP}{\mathcal{P}}
\newcommand{\R}{\mathbb{R}}
\newcommand{\N}{\mathbb{N}}
\newcommand{\rrad}{\mathcal{M}_{\mathcal{R}}}
\newcommand{\monicpols}{\mathbb{P}_{mon}(d)}
\newcommand{\monicpolreal}{\mathcal{R}_{mon}(d)}
\newcommand{\monicpolplus}{\mathcal{R}^+_{mon}(d)}
\newtheorem{theorem}{Theorem}[section]
\newtheorem{lemma}[theorem]{Lemma}
\newtheorem{definition}[theorem]{Definition}
\newtheorem{proposition}[theorem]{Proposition}
\newtheorem{notation}[theorem]{Notation}
\newtheorem{remark}[theorem]{Remark}
\newtheorem{example}[theorem]{Example}
\title{Finite Free Cumulants: Multiplicative Convolutions, Genus Expansion and Infinitesimal Distributions}
\author{\normalsize Octavio Arizmendi \thanks{O.A. gratefully acknowledges financial support by the grants Conacyt A1-S-9764 and SFB TRR 195.} \\ \normalsize octavius@cimat.mx  \\ \normalsize CIMAT, Guanajuato \and \normalsize Jorge Garza-Vargas\thanks{Supported by the NSF grant CCF-2009011.}\\\normalsize jgarzavargas@berkeley.edu\\ \normalsize UC Berkeley \and \normalsize Daniel Perales \thanks{Supported by CONACyT (Mexico) via the scholarship 714236.} \\\normalsize dperales@uwaterloo.ca\\ \normalsize University of Waterloo} 
\date{}
\begin{document}
\maketitle

\begin{abstract}Given two polynomials $p(x), q(x)$ of degree $d$, we give a combinatorial formula for the finite free cumulants of $p(x)\boxtimes_d q(x)$. We show that this formula admits a topological expansion in terms of non-crossing multi-annular permutations on surfaces of different genera. 

This topological expansion, on the one hand,  deepens the connection between the theories of finite free probability and free probability, and in particular   proves that $\boxtimes_d$ converges to $\boxtimes$ as $d$ goes to infinity. On the other hand, borrowing tools from the theory of second order freeness, we use our expansion to study the infinitesimal distribution of certain families of polynomials which include Hermite and Laguerre, and draw some connections with the theory of infinitesimal distributions for real random matrices.   

Finally, building on our results we give a new short and conceptual proof of a recent result  \cite{steinerberger2020, hoskins2020dynamics} that connects root distributions of polynomial derivatives with free  fractional convolution powers. 
\end{abstract}

\iffalse
\begin{itemize}
    \item 
Finite Free Cumulants: Multiplicative Convolutions and Asymptotic Expansion

\item  Finite Free Cumulants: Multiplicative Convolutions, genus expansion and infinitesimal distributions.

\item Genus Expansion of Finite Free Cumulants: Multiplicative Convolutions, Fractional Powers and Infinitesimal Distributions.

\item Finite Free Convolutions Revisited: multiplication, genus expansion and free fractional powers

\item Finite Free Multiplicative Convolution and Free Fractional Powers.

\item Finite free cumulants revisited: multiplicative convolutions, fractional powers and infinitesimal distributions. 

\item Genus expansion of finite free cumulants

\item Genus expansion of finite free cumulants: multiplicative convolution, fractional powers and infinitesimal distributions. 
\end{itemize}
\fi

\section{Introduction}

 The connection between free probability and random matrices, discovered by Voiculescu \cite{voiculescu1991limit}, is nowadays well-known and has found a broad range of applications. One could roughly summarize this connection as follows: large independent randomly rotated matrices behave like free random variables. This means that when the dimension $d$ goes to infinity, we may calculate the distribution of polynomials in these matrices by plugging in free random variables.

The topic we are concerned with here is the connection between free probability and the analytic theory of polynomials, where already for polynomials of fixed degree $d$ interesting parallels between the two theories emerge. This was recently brought to light by Marcus, Spielman and Srivastava \cite{marcus2016polynomial}, where two classical polynomial convolutions \cite{szego1922bemerkungen, walsh1922location} were rediscovered as expected characteristic polynomials of the sum and product of randomly rotated matrices. Since then, the \emph{finite free convolutions} discussed in the aforementioned work, and their relevance in  free probability and the analytic theory of polynomials, were further explored by Marcus \cite{marcus}, and  have been revisited in the now growing literature of \emph{finite free probability} \cite{arizmendi2018cumulants, leake2018further, gribinski2019rectangular,rosenmann2019polynomial,campbell2020finite,mirabelli2021hermitian}.

%In terms of random matrices, we are not directly interested in the eigenvalue distribution of a random matrix but rather in their expected characteristic polynomial. 
%  and their approximation to a limit distribution as the dimension $d$ tends to $\infty$.

%Following \cite{marcus2016polynomial}, where two classical polynomial convolutions were rediscovered as expected characteristic polynomials of the sum and product of randomly rotated matrices. 

%The connection with free probability was further developed by Marcus \cite{marcus}. Here we are mostly interested on the multiplicative case.
%Let $p_n$ and $q_n$, be two real-rooted polynomials the finite free additive convolution of $p_n$ and $q_n$ is given by

%If $p_n$ has positive root then the finite free additive convolution of $p_n$ and $q_n$. 

%Let $A_n$ and $B_n$Explain...

%Given their resemblance with free convolutions, these operations with where 

%a framework in which the free probability was observed in Marcus....

%In particular, one may deduce that finite free additive convolution approaches free additive convolution. The consideration in \cite{marcus} also suggests that finite free multiplicative convolution approaches free multiplicative convolution. We 

Of greatest relevance to this paper is the combinatorial approach, based on cumulants, for the finite free additive convolution that was introduced by Arizmendi and Perales \cite{arizmendi2018cumulants}. These \emph{finite free cumulants} converge to free cumulants as $d$ goes to infinity and  share many of their properties. In the present work we further this approach to also include the finite free multiplicative convolution in the combinatorial description and present  applications of our results to the asymptotic theory of polynomials. Below we give a brief summary of our main results, deferring to Section \ref{sec:allpreliminaries} the precise definitions of some of the notation used in these statements. 
\bigskip

\noindent \textbf{Combinatorial formulas.} Following \cite{marcus2016polynomial}, given two complex monic polynomials
\[
p(x)=\sum_{i=0}^d x^{d-i}(-1)^i a^p_i \qquad \text{and} \qquad  q(x)=\sum_{i=0}^d x^{d-i}(-1)^i  a^q_i
\]
of degree at most $d$, the \emph{finite  free additive convolution}, $p\boxplus_d q$, and the \emph{finite free multiplicative  convolution}, $p\boxtimes_d q$, are defined as:
\begin{align*}
[p\boxplus_d q](x)&:=\sum_{k=0}^d x^{d-k}(-1)^k \sum_{i+j=k}\frac{(d-i)!(d-j)!}{d!(d-k)!}a_i^p a_j^q , 
\end{align*}
and
$$[p\boxtimes_d q](x): =\sum_{k=0}^d x^{d-k}(-1)^k \frac{a_k^p a_k^q }{\binom{d}{k}}.$$
Our first result gives a formula for the finite free cumulants  $\kappa_n^d$ of $p\boxtimes_d q$ in terms of the finite free cumulants of $p$ and $q$ (see Definition \ref{defi.finite.free.cumulants}), and for the moments $m_n$ of the empirical root distribution of $p\boxtimes_d q$ (defined in \eqref{defi.moments}) in terms of the finite free cumulants of $p$ and the moments of $q$. We refer the reader to Section \ref{sec:allpreliminaries} for definitions of the notation related to partitions. 

\begin{theorem}[Primary formulas]
\label{thm.cumulant.of.products}
Let $p$ and $q$ be monic polynomials of degree $d$.  Then, the following formulas hold:
\begin{equation*}
%\label{eq.cumulant.of.products}
\kappa_n^d(p \boxtimes_d q )=\frac{(-1)^{n-1}}{d^{n+1} (n-1)!} \sum_{\substack{ \sigma, \tau\in P(n) \\ \sigma\lor  \tau=1_n }} d^{\blocks{\sigma}+\blocks{\tau}}\mu(0_n,\sigma)\mu(0_n,\tau) \kappa_\sigma^d(p ) \kappa^d_\tau(q )
\end{equation*}
and 
\begin{equation*}
%\label{eq:multiplicativemomentcumulant}
m_n(p \boxtimes_d q )=\frac{(-1)^{n-1}}{d^{n+1} (n-1)!} \sum_{\substack{ \sigma, \tau\in P(n) \\  \sigma\lor \tau=1_n }} d^{\blocks{\sigma}+\blocks{\tau}}\mu(0_n,\sigma)\mu(0_n,\tau) \kappa_\sigma^d(p ) m_\tau(q ).
\end{equation*}

\end{theorem}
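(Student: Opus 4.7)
The plan is to derive both formulas by Möbius inversion on the partition lattice $P(n)$, leveraging the pointwise multiplicativity of normalized coefficients under $\boxtimes_d$. Set $\tilde{a}_k(p) := a_k^p/\binom{d}{k}$; the defining product formula for $p \boxtimes_d q$ immediately gives $\tilde{a}_k(p \boxtimes_d q) = \tilde{a}_k(p)\,\tilde{a}_k(q)$, and extending multiplicatively yields $\tilde{a}_\sigma(p \boxtimes_d q) = \tilde{a}_\sigma(p)\,\tilde{a}_\sigma(q)$ for every $\sigma \in P(n)$, where $\tilde{a}_\sigma := \prod_{V \in \sigma}\tilde{a}_{|V|}$.

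First I would invoke the moment–cumulant relation for finite free cumulants established in \cite{arizmendi2018cumulants}. Because $\kappa^d_n(p)$ is constructed via Möbius inversion on $P(n)$, there is a pair of dual identities
$$\tilde{a}_n(p) = \sum_{\pi \in P(n)} f(\pi,d)\,\kappa^d_\pi(p), \qquad \kappa^d_n(p) = \sum_{\pi \in P(n)} g(\pi,d)\,\tilde{a}_\pi(p),$$
where $g(\pi,d)$ packages the Möbius factor $\mu(0_n,\pi)$ together with explicit powers of $d$ and the factorial normalization $1/(n-1)!$. Applying the second identity to $p \boxtimes_d q$ and substituting the multiplicativity observation above gives
$$\kappa^d_n(p \boxtimes_d q) = \sum_{\sigma \in P(n)} g(\sigma,d)\,\tilde{a}_\sigma(p)\,\tilde{a}_\sigma(q).$$

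Next I would expand each of $\tilde{a}_\sigma(p)$ and $\tilde{a}_\sigma(q)$ block-by-block using the first identity, introducing two further partitions $\pi_1,\pi_2 \in P(n)$ with $\pi_1,\pi_2 \le \sigma$, and then swap the order of summation so that the inner sum runs over $\sigma \geq \pi_1 \vee \pi_2$. The standard Möbius-function identity on $P(n)$ forces this inner sum to vanish unless $\pi_1 \vee \pi_2 = 1_n$, and in that case it collapses to a single constant; a short computation then shows that the surviving factors assemble into $\mu(0_n,\pi_1)\mu(0_n,\pi_2)\,d^{\#\pi_1+\#\pi_2}$ multiplied by the overall prefactor $(-1)^{n-1}/(d^{n+1}(n-1)!)$, which is precisely the first formula. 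The formula for $m_n(p \boxtimes_d q)$ follows by the same route, except that the factor $\tilde{a}_\sigma(q)$ is expanded using the Newton-type identity expressing elementary symmetric functions of the roots of $q$ in terms of their power sums, i.e.\ in terms of the moments $m_\tau(q)$ rather than the cumulants $\kappa^d_\tau(q)$. The main obstacle is the bookkeeping of $d$-powers, signs, and factorials in the resulting triple sum: one must verify that after the Möbius collapse to $\sigma \vee \tau = 1_n$ the surviving constants match exactly the prefactor $(-1)^{n-1}/(d^{n+1}(n-1)!)$ and the inner weight $d^{\#\sigma + \#\tau}\mu(0_n,\sigma)\mu(0_n,\tau)$ stated in the theorem; once the dual identity of Step~1 is written in a normalized form, this is a direct, if delicate, calculation.
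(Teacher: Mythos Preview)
Your argument for the first formula is essentially the paper's proof: start from the cumulant--coefficient formula \eqref{eq:coefftocumulant} applied to $p\boxtimes_d q$, use the multiplicativity of the (normalized) coefficients under $\boxtimes_d$, expand each coefficient factor via \eqref{eq:cumulanttocoefficient}, interchange sums, and collapse the inner sum over $\pi\geq\sigma\vee\tau$ using $\sum_{\pi\geq\rho}\mu(\pi,1_n)=\delta_{\rho,1_n}$. Your normalization $\tilde a_k=a_k/\binom{d}{k}$ is a cosmetic repackaging of the same computation.

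For the second formula your route diverges from the paper's. You propose to repeat the Möbius-inversion argument starting from the moment--coefficient relation \eqref{eq.coeff.cumulant} for $m_n(p\boxtimes_d q)$, expanding $a_\pi^p$ via cumulants and $a_\pi^q$ via moments. This works: the coefficient--moment expansion $a_n^q=\frac{1}{n!}\sum_{\tau}d^{\#\tau}\mu(0_n,\tau)m_\tau(q)$ differs from the coefficient--cumulant expansion \eqref{eq:cumulanttocoefficient} precisely by a factor $(d)_n/d^n$, and this factor is exactly compensated by the missing $(-d)^n$ prefactor when one starts from the moment--coefficient formula rather than \eqref{eq:coefftocumulant}. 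The paper instead proves the auxiliary identity $m_n(r)=\kappa_n^d(r\boxtimes_d\lag)$ (Lemma~\ref{lem:cumprodpoisson}, with $\lag$ the normalized Laguerre polynomial) and then obtains the second formula in one line as $m_n(p\boxtimes_d q)=\kappa_n^d(p\boxtimes_d q\boxtimes_d\lag)$ together with the first formula and $\kappa_\tau^d(q\boxtimes_d\lag)=m_\tau(q)$. Your approach is more direct and avoids introducing $\lag$; the paper's approach is shorter once the first formula is in hand, and it exhibits the conceptual fact that the moment--cumulant relation itself is an instance of the cumulant--product formula. The paper explicitly notes that your direct route is available; both are valid.
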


Note that as $d$ goes to infinity, the leading terms in the right-hand side of the above equations correspond to those pairs of partitions satisfying $\blocks{\sigma}+\blocks{\tau} = n+1$.  Using a formula that rewrites sums over  these pairs of partitions as sums over non-crossing partitions $\pi$ and their Kreweras complement $Kr(\pi)$, we will prove the following. 
 
\begin{theorem}[Terms of order $\Theta(1)$] \label{thm2} For any $p$ and $q$ monic polynomials of degree $d$,
\[
\kappa_n^d(p\boxtimes_d q)= \sum_{\pi\in NC(n)} \kappa_\pi^d(p) \kappa_{Kr(\pi)}^d(q)+ O(1/d)
\]
and 
\[m_n(p\boxtimes_d q)= \sum_{\pi\in NC(n)} \kappa_\pi^d(p) m_{Kr(\pi)}(q)+ O(1/d).\]
\end{theorem}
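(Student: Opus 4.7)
The plan is to start from Theorem \ref{thm.cumulant.of.products} and extract the $d \to \infty$ asymptotics term by term. Each summand carries the factor $d^{\blocks{\sigma} + \blocks{\tau} - n - 1}$, so the contributions of order $\Theta(1)$ come precisely from pairs $(\sigma, \tau)$ with $\blocks{\sigma} + \blocks{\tau} = n+1$, and all others contribute at most $O(1/d)$.

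The first technical step is to verify that $\blocks{\sigma} + \blocks{\tau} \le n+1$ whenever $\sigma \lor \tau = 1_n$, so the above cutoff is meaningful. This is a spanning-forest argument: choosing a spanning tree on each block of $\sigma$ and each block of $\tau$ produces $(n - \blocks{\sigma}) + (n - \blocks{\tau})$ edges whose union is connected on $[n]$, hence contains at least $n-1$ edges. Extremal pairs $\blocks{\sigma}+\blocks{\tau}=n+1$ are exactly those for which this edge set is a tree.

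The main step, and the main obstacle, is the combinatorial identity
\[
\frac{(-1)^{n-1}}{(n-1)!} \sum_{\substack{\sigma, \tau \in P(n) \\ \sigma \lor \tau = 1_n \\ \blocks{\sigma} + \blocks{\tau} = n+1}} \mu(0_n, \sigma)\,\mu(0_n, \tau)\, f(\sigma)\, g(\tau) \;=\; \sum_{\pi \in NC(n)} f(\pi)\, g(Kr(\pi))
\]
for any pair $f, g$ of multiplicative functions on partitions. This is the ``rewriting formula'' alluded to in the statement. My approach is to use the factorization $\mu(0_n, \sigma) = (-1)^{n - \blocks{\sigma}} \prod_B (|B|-1)!$ to lift each pair $(\sigma, \tau)$ to pairs of permutations $(\alpha, \beta)$ whose cycles refine the blocks, with the factorials counting the choices of cyclic ordering inside each block. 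Under the extremal constraint, transitive pairs $(\alpha, \beta)$ with $\cyc{\alpha} + \cyc{\beta} = n+1$ form a planar/genus-zero structure; a careful enumeration, grouping by cycle type and using the type-invariance of $f$ and $g$, then matches the counts on the right-hand side, with the factor $(n-1)!$ absorbing a rotational ambiguity in the planar encoding.

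Once the identity is applied with $f = \kappa^d(p)$ and $g = \kappa^d(q)$, the cumulant statement of Theorem \ref{thm2} follows directly. The moment formula is obtained by repeating the identical argument on the second equality of Theorem \ref{thm.cumulant.of.products}, taking $g$ to be the moment functional $m(q)$ instead of the cumulant functional. No further input is needed, since the multiplicativity properties of $m$ and the $d$-dependence are the same as for $\kappa^d$.
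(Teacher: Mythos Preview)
Your approach is correct and essentially the same as the paper's: the paper also starts from Theorem~\ref{thm.cumulant.of.products}, isolates the top-order terms $\blocks{\sigma}+\blocks{\tau}=n+1$, and then invokes the combinatorial identity you state (this is Lemma~\ref{Cor:NCtoP}), whose proof in Section~\ref{sec:genusformula} proceeds exactly by lifting partitions to permutations via $|\mu(0_n,\pi)|=\prod_V(|V|-1)!$, summing over the product $\gamma=\alpha\beta$, grouping by the cycle type of $\gamma$, and using that the genus-zero case $\zeta=[n]$ contributes the $(n-1)!$ permutations $\gamma$ of type $[n]$ against the $(n-1)!$ in the denominator. Your spanning-tree argument for the bound $\blocks{\sigma}+\blocks{\tau}\le n+1$ is a nice explicit justification that the paper leaves implicit (it falls out of $g\ge 0$ in Theorem~\ref{them:JaquesCori}).
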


This will prove useful in analyzing the asymptotic behavior of $\boxtimes_d$. In particular, the above theorem implies that our cumulant-cumulant formula coincides, up to the first order, with the formula obtained by Nica and Speicher for the cumulants of a product of two free random variables \cite[Theorem 1.4]{nica1996multiplication} (also see \cite{krawczyk2000combinatorics}). 

Moreover, when $q(x)=(x-1)^d$, the second equation in Theorem \ref{thm2} recovers the first-order asymptotics of $\kappa_n^d$ proven in \cite{arizmendi2018cumulants}, namely
$$m_n(p_d) = \sum_{\pi\in NC(n)} \kappa_\pi^d(p)+O(1/d).$$
This formula is helpful when studying the limiting bulk behavior of root distributions.  However, when working with ``fluctuations" of empirical root distributions around their limiting measure, it will also be necessary to have a better understanding of the terms of order $\Theta(1/d)$ in the moment-cumulant formula. In Section \ref{sec:generalformula} we will show that these terms can be written as a sum over the set of  annular non-crossing permutations $S_{NC}(\p, \q)$ introduced in \cite{mingo2004annular} (see Section \ref{sec:annularpermutations} for a definition). In particular, we obtain the following result.

\begin{theorem}[Terms of order $\Theta(1/d)$]
\label{thm:3}
Let $p$ be a monic polynomial of degree $d$ and let $n\leq d$, then 
\begin{equation}\label{eqthm13}m_n(p) - \sum_{\pi\in NC(n)} \kappa_\pi^d(p) = - \, \frac{n}{2 d}  \sum_{ \substack{\p+\q=n\\ \pi \in S_{NC}(\p,\q)}} \frac{\kappa_\pi^d(p)}{\p\q}\, +\,O(1/d^2).\end{equation}
\end{theorem}

Theorems \ref{thm2} and \ref{thm:3} are a consequence of our main combinatorial result, which we state and prove in Section  \ref{sec:genusformula}. In short, for the general case rather than summing over partitions we sum over  permutations and, using the notion of relative genus and some of the theory of maps and surfaces, for every $k\geq 0$, we give a topological expansion for the terms of order $\Theta(1/d^k)$ appearing in the above formulas (see Theorem \ref{thm:generalformula}  for a precise statement).  Interestingly, the terms of genus zero in our expansion for the order $\Theta(1/d^k)$ are precisely the (planar) multi-annular non-crossing permutations with $k+1$ circles. These combinatorial objects were introduced in \cite[Section 8]{mingo2004annular} and appear naturally in the theory of second (and higher) order freeness  \cite{mingo2006second, mingo2007second,  collins2006second}.

\bigskip

\noindent \textbf{Asymptotic root distributions.} Theorem \ref{thm2} given above allows us to obtain new proofs of two facts relating the asymptotic behavior of certain families of polynomials with operations in free probability.  

%Firstly, as the expert reader may predict, we can use our result to show that the finite free multiplicative convolution $\boxtimes_d$ approaches the free multiplicative convolution $\boxtimes$. This phenomenon was discovered by Marcus   in \cite{marcus}, where he showed that  a transform that linearizes $\boxtimes_d$ converges, as $d$ goes to infinity, to the logarithm of Voiculescu's $S$-transform, which is known to linearize $\boxtimes$. In the present paper we give a statement in terms of convergence of measures. 

Firstly, as the expert reader may predict, we can use our result to show that the finite free multiplicative convolution $\boxtimes_d$ converges to the free multiplicative convolution $\boxtimes$. The fact that the finite free multiplicative convolution is related in the limit to the free multiplicative convolution was  discovered by Marcus in \cite{marcus}, where he showed that  a transform that linearizes $\boxtimes_d$ (i.e. the logarithm of the $d$-finite $S$-transform) converges, as $d$ goes to infinity, to the logarithm of Voiculescu's $S$-transform, which is known to linearize $\boxtimes$. In the present paper we give a statement in terms of convergence of measures. Note that we do not need to restrict to the case when both measures are supported on $[0, \infty)$ and we do not require the sequences of polynomials to have a uniformly bounded root distribution.

\begin{theorem}[Weak convergence]
\label{thm:weakconvergence}
Let $\mu$ and $\nu$ be probability measures supported on a compact subset of the real line. Let $(p_d)_{d=1}^\infty$ and $(q_d)_{d=1}^\infty$ be sequences of monic real-rooted polynomials, where $p_d$ and $q_d$ have degree $d$, and assume that the $q_d$ have only non-negative roots. If the empirical root distributions of these sequences of polynomials converge weakly to $\mu$ and $\nu$ respectively, then the empirical root distributions of the sequence $(p_d\boxtimes_d q_d)_{d=1}^\infty$ converge weakly to $\mu\boxtimes \nu$. 
\end{theorem}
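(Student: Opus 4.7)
The plan is to use the method of moments. Because $\mu$ is compactly supported on $\R$ and $\nu$ is compactly supported on $[0,\infty)$, the free multiplicative convolution $\mu\boxtimes\nu$ is also compactly supported, and hence uniquely determined by its moment sequence. It therefore suffices to show that for each fixed $n\in\N$,
\[
m_n(p_d\boxtimes_d q_d)\longrightarrow m_n(\mu\boxtimes\nu) \qquad \text{as } d\to\infty,
\]
since tightness of the resulting sequence follows automatically from moment convergence to a compactly supported target.

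I would first dispose of the model case in which the roots of $p_d$ and $q_d$ lie in a fixed compact set uniformly in $d$. In that situation weak convergence upgrades to moment convergence $m_n(p_d)\to m_n(\mu)$ and $m_n(q_d)\to m_n(\nu)$; and since the moment-cumulant relation in finite free probability is a polynomial change of variables whose coefficients are rational in $d$ and tend to the classical moment-cumulant coefficients as $d\to\infty$, this forces $\kappa_n^d(p_d)\to\kappa_n(\mu)$ for every $n$, and likewise for $q_d$. Theorem \ref{thm2} then yields
\[
m_n(p_d\boxtimes_d q_d)=\sum_{\pi\in NC(n)}\kappa_\pi^d(p_d)\,m_{Kr(\pi)}(q_d)+O(1/d),
\]
whose right-hand side converges term by term to $\sum_{\pi\in NC(n)}\kappa_\pi(\mu)\,m_{Kr(\pi)}(\nu)$, which is exactly $m_n(\mu\boxtimes\nu)$ by the Nica--Speicher cumulant-moment formula for $\boxtimes$.

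To remove the uniform-boundedness assumption I would truncate the roots: fix $R$ large enough that $\mathrm{supp}(\mu)\cup\mathrm{supp}(\nu)\subset[-R,R]$ and replace every root of $p_d$ (resp. $q_d$) of modulus greater than $R$ by $0$, producing polynomials $\tilde p_d,\tilde q_d$. Weak convergence to compactly supported limits guarantees that only $o(d)$ roots are modified, so $\tilde p_d,\tilde q_d$ still converge weakly to $\mu,\nu$, and the previous step yields $m_n(\tilde p_d\boxtimes_d\tilde q_d)\to m_n(\mu\boxtimes\nu)$. The main obstacle I anticipate is the final comparison $m_n(p_d\boxtimes_d q_d)-m_n(\tilde p_d\boxtimes_d\tilde q_d)\to 0$: the outlier roots are few but possibly arbitrarily large, so the naive bound on the normalized elementary symmetric polynomials fails. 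I plan to address this by exploiting the primary formula in Theorem \ref{thm.cumulant.of.products}, in which the normalization $d^{|\sigma|+|\tau|-n-1}$ suppresses any term receiving an outlier contribution from a bounded number of positions, so the difference vanishes after letting $d\to\infty$ and then $R\to\infty$; as a robust backup, one may combine Theorem \ref{thm2} with Marcus's convergence of the finite $S$-transform to Voiculescu's $S$-transform and standard continuity of the $S$-transform under weak convergence of compactly supported measures to identify the limit without any truncation at all.
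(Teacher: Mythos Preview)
Your handling of the uniformly bounded case is correct and coincides with the paper's Proposition \ref{prop:convinmoments}. The genuine gap is in the comparison step after truncation, where both of your proposed routes fail.

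Approach (a) does not work: the normalization $d^{\,|\sigma|+|\tau|-n-1}$ in Theorem \ref{thm.cumulant.of.products} equals $1$ on the leading terms $|\sigma|+|\tau|=n+1$, and those terms involve $\kappa_\sigma^d(p_d)$ and $m_\tau(q_d)$ with no further damping. A single root of $p_d$ placed at $d^2$ (all others at $0$) already forces $m_1(p_d)=d\to\infty$ while $\mu_{p_d}\to\delta_0$ weakly; the finite cumulants inherit this blow-up, so the difference $m_n(p_d\boxtimes_d q_d)-m_n(\tilde p_d\boxtimes_d\tilde q_d)$ need not even be bounded in $d$, and no subsequent $R\to\infty$ can rescue it. Approach (b) is addressed explicitly in the paper (see the remark after the theorem): Marcus's finite $S$-transform linearizes $\boxtimes_d$ only at the discrete grid $\{-k/d\}_{k=0}^d$, so its pointwise convergence to Voiculescu's $S$-transform does not identify the weak limit of $p_d\boxtimes_d q_d$.

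The paper's device is different and avoids moments in the comparison altogether: \emph{interlacing}. One truncates by pushing each outlying root of $p_d$ to the nearest endpoint $\pm M$ one at a time, obtaining a chain $p_d=p_d^{(0)},p_d^{(1)},\dots,p_d^{(n_d)}=\hat p_d$ with $p_d^{(j+1)}$ interlacing $p_d^{(j)}$ and $n_d=o(d)$. Because $\boxtimes_d$ against a fixed $q\in\monicpolplus$ is linear and sends $\monicpolreal$ to $\monicpolreal$, convex combinations stay real-rooted, hence $\boxtimes_d q_d$ preserves interlacing; thus the Kolmogorov distance between the root distributions of $p_d\boxtimes_d q_d$ and $\hat p_d\boxtimes_d q_d$ is at most $n_d/d\to 0$. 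Repeating for $q_d$ reduces to the uniformly bounded case. This argument establishes weak convergence directly, without ever asserting that the moments of the untruncated convolutions converge.
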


\begin{remark}
This theorem does not seem to follow from the results obtained in \cite{marcus}\footnote{The main obstacle being that the logarithm of the $d$-finite $S$-transform is only shown to linearize $\boxtimes_d$ at the points $\{-\frac{k}{d}\}_{k=0}^d$ (see Lemma 4.10 in \cite{marcus}), while the convergence to the logarithm of the $S$-transform is only proven for a subset of $(0, \infty)$ (see Lemma 4.11 in \cite{marcus}), and all of our attempts to use an analytic continuation argument faced technical difficulties that we did not see how to circumvent.}.
\end{remark}

Secondly, we derive the interesting relation between derivatives of a polynomial and free additive convolution powers, as observed by Steinerberger \cite{steinerberger2020} and proved by Hoskins and Kabluchko \cite{hoskins2020dynamics}. The main observation here, which we prove in Section \ref{sec:freefractionalpowers}, is that when $q(x)= x^{j} (x-1)^{d-j}$  it holds that
\begin{equation}
\label{eq:derivativeasmult}
p(x)\boxtimes_d q(x) = \frac{1}{(d)_j} x^{j} D^{j} p(x),   
\end{equation}
for any $p$ and where $D$ denotes differentiation with respect to $x$. So, in the setup where we have a sequence $(p_d)_{d=1}^\infty$ of polynomials whose root distributions are  converging to some compactly supported limiting measure $\mu$, equation (\ref{eq:derivativeasmult}) can be used in combination with Theorem \ref{thm:weakconvergence} to study, from a free probability perspective, the root distribution of the polynomials obtained by repeatedly differentiating the $p_d$. See Section \ref{sec:freefractionalpowers} for precise statements.

Finally, in the same framework of a sequence of real-rooted polynomials $(p_d)_{d=1}^\infty$ with  asymptotic (compactly supported) root distribution $\mu$,  we use Theorem \ref{thm:3} to study the ``fluctuations" of order $\Theta(1/d)$ of the root distributions of the $p_d$ around $\mu$. To be precise, if $m_n$ are the moments of $\mu$ we can define
$$m_n' := \lim_{d\to\infty} d(m_n(p_d)-m_n), $$
when the limit exists, and let $\mu'$ be the signed measure with moments $m_n'$. In the free probability literature \cite{belinschi2012free, shlyakhtenko2015free, mingo2019non} the measure $\mu'$ is referred to as the \emph{infinitesimal distribution} of the sequence $(p_d)_{d=1}^\infty$, and the pair $(\mu, \mu')$  is studied through the transforms 
\[G_\mu(z):=\sum_{n\geq 0} m_n z^{-n-1} \quad \text{and}  \quad G_{inf}(z):=\sum_{n\geq 1} m'_n z^{-n-1}.\]

\iffalse

In this paper, we obtain that for certain families of polynomials, the Cauchy transform of $\mu'$ can be directly related to the Cauchy transform of $\mu$. This relation turns out to be related to the Markov transform  \cite{kerov1998}. If there exists a unique signed measure $M(\mu)$ which satisfies the relation $$G_{M(\mu)}(z)=-\frac{G'_\mu(z)}{G_\mu(z)},$$
we say that $M(\mu)$ is the inverse transform of $\mu$. 

{\color{red}In this paper we obtain that for certain families of polynomials, $\mu'$ can  be described in terms of the inverse Markov transform $M(\mu)$ of $\mu$, \cite{kerov1998}, which is defined to be the unique signed measure that satisfies the functional equation $$G_{M(\mu)}(z)=-\frac{G'_\mu(z)}{G_\mu(z)}.$$}
\fi

Borrowing tools from the theory of second order freeness \cite{collins2006second}, we use Theorem \ref{thm:3} to prove the following. 

\begin{theorem}[Infinitesimal distributions]
\label{thm:infinitesimaldists}
Let $\mu$ be a probability measure on $\mathbb{R}$ with compact support and suppose that there is a sequence of monic real-rooted polynomials $(p_d)_{d= 1}^\infty$, where $p_d$ is of degree $d$, such that $\kappa_n^d(p_d)$ coincides with the $n$-th free cumulant of $\mu$ for all $n\leq d$. 

Then, the empirical root distribution of $(p_d)_{d=1}^\infty$ has an infinitesimal asymptotic distribution $(\mu,\mu')$ with  infinitesimal Cauchy transform given by
\begin{equation}
\label{eq:Ginfpolynomials}
G_{inf}(z)=\frac{G_{\mu}''(z)}{2G_{\mu}'(z)}-\frac{G_\mu'(z)}{G_{\mu}(z)}.
\end{equation}
%Consequently, \textcolor{blue}{if $M(\mu)$ is a probability measure, then $M(M(\mu))$ is a signed measure and} \begin{equation*} \mu'=\frac{1}{2}\left(M(\mu)-M(M(\mu))\right).\end{equation*}
\end{theorem}

\begin{remark}[Relation to the Markov transform]
\label{rem:markovtransform}
Interestingly, Theorem \ref{thm:infinitesimaldists} can be interpreted in terms of the Markov transform, which hints to potential future research directions. 
Given a probability measure $\mu$ on $\mathbb{R}$, in \cite{kerov1998} Kerov defined the \emph{inverse Markov transform} of $\mu$ as the unique Rayleigh function\footnote{See \cite[Definition 1.4.1]{kerov1998} for a definition of  \emph{Rayleigh function}.} $F:\mathbb{R} \to [0, 1]$  satisfying 
\begin{equation}
\label{eq:defmarkovtransform}
\int_{-\infty}^\infty \frac{1}{z-u}d\mu(u) = \frac{1}{z} \exp \left( -\int_{-\infty}^0\frac{F(u)}{z-u} du+ \int_0^{\infty} \frac{1-F(u)}{z-u} du \right) .
\end{equation}
In general, the derivative of $F$ (in the sense of distributions) is a Schwarz distribution, and therefore one can interpret the inverse Markov transform of $\mu$ as a Schwarz distribution, which we will denote by $M(\mu)$. Moreover, in relevant situations (such as when $\mu$ is a semicircle or a Marchenko-Pastur distribution),   $M(\mu)$ and $M(M(\mu))$ turn out to be compactly supported probability measures themselves, see \cite[Theorem 6.50]{franz2020monotone} for details. In such case (and more in general whenever $G_{M(\mu)}(z)$ and $G_{M(M(\mu))}(z)$ make sense)  (\ref{eq:defmarkovtransform}) yields
$$G_{M(\mu)}(z)=-\frac{G'_\mu(z)}{G_\mu(z)}\quad \text{and} \quad G_{M(M(\mu))}(z)=\frac{G'_\mu(z)}{G_\mu(z)}-\frac{G''_\mu(z)}{G'_\mu(z)},$$
and  $\frac{1}{2}(G_{M(\mu)}(z)-G_{M(M(\mu))}(z))$ turns out to be precisely the right-hand side of (\ref{eq:Ginfpolynomials}). It follows that, in the setting of Theorem \ref{thm:infinitesimaldists}, whenever $M(\mu)$ is a probability measure and $M(M(\mu))$ is a signed measure\footnote{Importantly, we know from \cite[pg. 13]{kerov1998} that the inverse Markov transform preserves the property of being compactly supported. }
\begin{equation}
\label{eq:infinitesimalmarkov}
\mu'=\frac{1}{2}(M(\mu)-M(M(\mu)).
\end{equation}
Finally, we bring to the  attention of the reader that if $p(z)$ is a real-rooted polynomial of degree $d$, and $\nu_0$ and $\nu_1$ are the empirical root distributions of $p(z)$ and $p'(z)$ respectively, then $M(\nu_0) = \nu_0-\nu_1$, which follows since $G_{\nu_0}(z)=\frac{p'(z)}{dp(z)}$ (see \cite[equation (2)]{kerov1998}). We leave as an open direction interpreting this fact in the context of Theorem \ref{thm:infinitesimaldists} and (\ref{eq:infinitesimalmarkov}). 
\end{remark}

Note that Theorem \ref{thm:infinitesimaldists} implies that, under certain assumptions, $G_{inf}$ is explicitly determined by $G_\mu$. Similarly, in Section \ref{sec:infinitesimaldistributions} we show that under the same assumptions, the infinitesimal $R$-transform of $\mu'$ can be explicitly written in terms of the $R$-transform of $\mu$.

Two particularly interesting and important families of polynomials which are included in the above theorem are the Hermite and the Laguerre  polynomials, which appear as the analog of the Gaussian and the Poisson distributions in this theory. Their  empirical distributions converge to the semicircle and Marchenko-Pastur distribution (whose inverse Markov transforms we will compute in Section \ref{sec:examples}), respectively, which are the well known limiting distributions of the GOE/GUE and real/complex Wishart matrices. More interestingly, our formula (\ref{eq:infinitesimalmarkov}) implies that there is a coincidence (up to a sign) between the infinitesimal distributions of Hermite polynomials with the GOE, which was previously shown in \cite{dumitriu2006global} and \cite{kornyik2016wigner} (each with different methods), and Laguerre polynomials with real Wishart matrices, which was also shown in \cite{dumitriu2006global}. In fact, the results in \cite{dumitriu2006global}  apply to all of the so called $\beta$-Hermite and $\beta$-Laguerre ensembles, which for $\beta=\infty$ gives the Hermite and Laguerre polynomials, and for $0<\beta<\infty$ define  random matrix ensembles (in particular, $\beta=1$ yields the GOE and real Wishart ensembles respectively). However, their approach is specific to $\beta$-Hermite and  $\beta$-Laguerre ensembles, and heavily exploits the explicit formulas for the joint density of their eigenvalues. Whereas  the approach presented here, although for the moment inapplicable to random matrices,  is amenable to a wider variety of families of deterministic polynomials (see Section \ref{sec:infinitesimaldistributions}). \\

Apart from this introductory section, the rest of the paper is organized in four other sections. In Section \ref{sec:allpreliminaries} we introduce some notation and survey some of the theory that will be needed throughout this paper. Section \ref{sec:fin.free.mult.conv} is divided in three parts: first we prove Theorem \ref{thm.cumulant.of.products}, that provides a formula for the finite free cumulants of a product of polynomials; then we prove Theorems \ref{thm2} and \ref{thm:weakconvergence}, that articulate in a precise way that the finite free multiplicative convolution converges to the free multiplicative convolution; and the last part retrieves the interesting relation between derivatives of a polynomial and free additive convolution powers. In Section \ref{sec:genusformula} we prove Theorem \ref{thm:3} and its generalization, which gives a topological interpretation of the terms of order $\Theta(1/d^k)$ appearing in Theorem \ref{thm.cumulant.of.products}. The proof of Theorem \ref{thm:infinitesimaldists} and study of the infinitesimal limiting distribution of certain sequences of polynomials is given in Section \ref{sec:infinitesimaldistributions}.

%%%%%%%%%%%%%%%%%%%%%%%%%%%%%%%%%%%%%%%%%%%%%%%%%
%%%%%%%%%%%%%%%%%%%%%%%%%%%%%%%%%%%%%%%%%%%%%%%%%
\section{Preliminaries}
\label{sec:allpreliminaries}

We begin with an introduction of the notation used  throughout this paper regarding set partitions, permutations and polynomials. We then briefly survey some of the theory that will be needed in the sequel. 
\bigskip

\noindent \textbf{Partitions and permutations.} Given a positive integer $n$, a \textit{set partition $\pi$ of $[n]:=\{1,\dots,n\}$} is a set of the form $\pi=\{V_1,\dots,V_k\}$ where the \textit{blocks} $V_1,\dots, V_k\subset [n]$ are pairwise disjoint non-empty subsets of $[n]$ such that $V_1\cup \dots \cup V_k=[n]$. We write $\blocks \pi$ to denote the number of blocks of $\pi$ (in this case $k$).  We denote by $\partlat(n)$ the set of all set partitions of $[n]$. When it is clear from the context we will simply write ``partitions" to refer to set partitions. 

We say that $\pi\in \partlat(n)$ is non-crossing, if for every $1\leq i < j < k < l \leq n$ such that $i, k$ belong to the same block $V$ of $\pi$ and $j,l$ belong to the same block $W$ of $\pi$, then it necessarily follows that all $i,j,k,l$ are in the same block, namely $V=W$. We will denote by $NC(n)$ the set of all non-crossing partitions of $[n]$. We refer the reader to the monograph \cite{nica2006lectures} for a detailed exposition on non-crossing partitions.

As usual, we will turn $\partlat(n)$ into a lattice by equipping it with the reversed refinement order $\leq$, that is, given $\pi, \sigma \in \partlat (n)$ we say that $\pi \leq \sigma$ if every block of $\pi$ is fully contained in a block of $\sigma$. We denote the minimum and maximum in $\partlat(n)$ by $0_n$ and $1_n$ respectively, and use $\pi\vee \sigma$ to denote the supremum of the set $\{\sigma,\pi\}$. 

Let $\mu(\cdot, \cdot)$  be the M\"obius function of the incidence algebra associated to the lattice $\partlat(n)$. It is well known that for any $\pi\in \partlat(n)$ the following formula holds (see \cite[Section 3.10]{stanley2011enumerative})
\begin{equation}
\label{eq:formulaformu}
\mu(0_n, \pi)= (-1)^{n-\blocks{\pi}} \prod_{V\in \pi} (|V|-1)!. 
\end{equation}
We will use $S_n$ to denote the symmetric group on $n$ elements and given a permutation $\alpha\in S_n$ we denote by $\cyc{\alpha}$ the number of cycles of $\alpha$. Every permutation $\alpha\in S_n$ is naturally associated to a partition $f(\alpha)\in P(n)$ with blocks given by the orbits in $[n]$ under the action of $\alpha$. In other words,  $i,j\in [n]$ are in the same block of $f(\alpha)$  if and only if $i$ and $j$ are in the same cycle of $\alpha$. Notice that $\cyc{\alpha}=\blocks{f(\alpha)}$.

Given any sequence $(u_j)_{j=1}^\infty$, and a partition $\pi\in \partlat(n)$ we use the notation 
$$u_{\pi}:=\prod_{V\in \pi} u_{\blocks{(V)}}.$$
Similarly, for $\alpha \in S_n$, we will abuse notation and use $u_\alpha$ as a shorthand notation for $u_{f(\alpha)}$.  
\bigskip

\noindent \textbf{Polynomials.} Let $\monicpols$ be the family of monic polynomials of degree $d$. Given a $p\in\monicpols$ we denote its $d$ roots by $\lambda_1(p), \dots, \lambda_d(p)$ , and for $n=0,1,\dots, d$ denote by $a_n^p$ the $n$-th elementary symmetric polynomials on the roots of $p$, namely
\begin{equation*} 
\label{eq:defi.coeff}
    a_n^p:=\sum_{1\leq i_1<\dots < i_n\leq d} \lambda_{i_1}(p)\cdots \lambda_{i_n}(p),
\end{equation*}
with the convention that $a_0^p:=1$. Recall that since $p$ is monic, then $a_0^p,a_1^p,\dots, a_d^p$ are (up to a sign) the coefficients of $p$, as we have $p(x)=\sum_{i=0}^d x^{d-i} (-1)^i a_i^p$. 

To each polynomial $p\in \monicpols$ we associate its empirical root distribution $$\mu_p := \frac{1}{d} \sum_{i=1}^d \delta_{\lambda_i(p)}.$$
Then, the $n$-th moment of the polynomial $p$, denoted by $m_n(p)$, is defined as the $n$-th moment of $\mu_p$, that is
\begin{equation}
\label{defi.moments}
m_n(p):=\frac{1}{d}\sum_{i=1}^d \lambda_i(p)^n.
\end{equation}
Notice that $dm_n(p)$ is simply the sum of the $n$-th powers of the roots of $p$. Thus, the sequence of moments can be related to the sequence of coefficients using the Newton identities. This gives us a coefficient-moment formula
\begin{equation}
\label{eq.coeff.cumulant}   
a_n^p = \frac{1}{n!} \sum_{\pi \in \partlat (n)} d^{\blocks{\pi}} \mu(0_n, \pi) m_\pi(p), \qquad \text{ for } n=1,2,\dots,d
\end{equation}
This formula can be inverted to write  moments in terms of coefficients, see  \cite[Lemma 4.1]{arizmendi2018cumulants}.

In this paper we will mainly be interested in real-rooted polynomials. So, we will use $\monicpolreal$ to denote the family of real-rooted monic polynomials of degree $d$, and  $\monicpolplus$ to denote the subset of $\monicpolreal$ of polynomials having only non-negative roots.

%%%%%%%%%%%%%%%%%%%%%%%%%%%%%%%%%%%%%%%%%%%%%%%%%%
\subsection{Free probability}
\label{sec:freeprobability}

Here we review some basics of free probability from a combinatorial point of view. For complete introductions to free probability we recommend the monographs \cite{voiculescu1992free, nica2006lectures} and \cite{mingo2017free}. 

Free additive and multiplicative convolutions, denoted by $\boxplus$ and $\boxtimes$ respectively, correspond to the sum or product of free random variables, that is, $\mu_a\boxplus\mu_b=\mu_{a+b}$ and $\mu_a\boxtimes\mu_b=\mu_{ab}$ for $a$ and $b$ free random variables. In this paper, rather than using the notion of free independence we will work solely with the additive and multiplicative convolutions, both of which  can be defined in terms of cumulants. 

For any  probability measure $\mu$, we denote by $m_n(\mu):=\int t^n \mu(dt)$ its $n$-th moment. The \emph{free cumulants} \cite{speicher94multiplicative} of $\mu$, denoted by $(\kappa_n(\mu))_{n=1}^\infty$, are recursively defined via the moment-cumulant formula
\begin{equation*}\label{MCF}
m_n(\mu) =\sum_{\pi\in \nc(n)}\freec_{\pi}(\mu).
\end{equation*} 
It is easy to see that the sequence $(m_n(\mu))_{n=1}^\infty$ fully determines $(\kappa_n(\mu))_{n=1}^\infty$ and vice-versa. So, we can define convolutions of compactly supported measures on the real line via their free cumulants. 

\begin{definition}[Free additive convolution]
Given two compactly supported probability measures $\mu$ and $\nu$ on the real line, we define $\mu\boxplus \nu$ to be the unique measure with cumulant sequence given by
$$\kappa_n(\mu \boxplus \nu) = \kappa_n(\mu)+\kappa_n(\nu).$$
\end{definition}
That $\mu\boxplus \nu$ is a positive measure (in fact compactly supported probability measures on $\R$) follows from \cite{speicher94multiplicative}. 

Let  $\mu^{\boxplus m} =\mu \boxplus \cdots \boxplus \mu$  be the free convolution of $m$ copies of $\mu$. From the above definition, it is clear that $\kappa_n(\mu^{\boxplus m}) = m\kappa_n(\mu)$. In  \cite{nica1996multiplication} Nica and Speicher discovered that one can extend this definition to non-integer powers, we refer the reader to \cite[Section 1]{shlyakhtenko2020fractional} for a discussion on  fractional powers. 

\begin{definition}[Fractional free convolution powers]
Let $\mu$ be a compactly supported probability measure on the real line. For $t\geq 1$, the \emph{fractional convolution power} $\mu^{\boxplus t}$ is defined to be the unique measure with cumulants
$$\kappa_n(\mu^{\boxplus t}) = t \kappa_n(\mu). $$
\end{definition}

It follows from \cite{nica1996multiplication} that  $\mu^{\boxplus t}$ is always well defined and that it is  a compactly supported probability measure on $\R$. Also from \cite{nica1996multiplication} we know that the multiplicative convolution can be defined via cumulants  and that the resulting measure is compactly supported on the real line, when one measure is supported on $[0, \infty)$ and the other one is supported on $\R$. 

\begin{definition}[Free multiplicative convolution]
Let $\mu$ and $\nu$ be compactly supported measures on the real line and assume that the support of $\nu$ is contained in $[0, \infty)$. Then, $\mu\boxtimes \nu$ is the unique measure whose cumulants satisfy the following equation for every $n$\footnote{Given $\pi \in \nc(n)$, $Kr(\pi)$ denotes the Kreweras complement of $\pi$. We refer the reader to \cite[Definition 9.21]{nica2006lectures} for an intuitive geometric  definition of the Kreweras complement and to Section \ref{sec:mapsandpermutations} of this manuscript for an algebraic definition in terms of permutations. }
\begin{equation*}
\label{eq.cumulant.product}
    \freec_n(\mu  \boxtimes \nu ) =  \sum_{\pi \in NC(n)} \freec_\pi(\mu)   \freec_{Kr(\pi)}(\nu). \end{equation*}
\end{definition}

We remind the reader of the following identity (see \cite[Section 14]{nica2006lectures})  that will be used in the sequel
\begin{equation}
    \label{eq:freemomencumulantproduct}
    m_n(\mu\boxtimes \nu) = \sum_{\pi \in NC(n)} \freec_\pi(\mu) m_{Kr(\pi)}(\nu). 
\end{equation}

In the analytic approach to free probability, the \emph{Cauchy transform} (also known as Stieltjes transform) and the  \emph{$R$-transform} of $\mu$ play an important role and are given by
$$G_\mu(z) = \sum_{n=0}^\infty m_n(\mu) z^{-n-1} \quad \text{and} \quad R_\mu(z) =\sum_{n=1}^\infty \kappa_n(\mu) z^{n-1}.$$

Alternatively, recall that  for $z$ near infinity, $G_\mu$ has a compositional inverse which we will denote by $K_\mu$, known as the  \emph{$K$-transform}. Then  the $R$-transform and the $K$-transform are related by the following identity 
$$R_\mu(z)=K_\mu(z)-\frac{1}{z}.$$

%In the proof of Theorem \ref{thm:infinitesimaldists} we will also use 
The $F$-transform of $\mu$, is the multiplicative inverse of the Cauchy transform $F_\mu(z):=1/G_\mu(z)$. It can be seen that if $\mu$ is a compactly supported measure, then $F_\mu(z)$ admits an expansion of the form
$$F_\mu(z)=z+ a_0+\sum_{n=1}^\infty b_n z^{-n}.$$

\subsection{Finite free probability}
\label{sec:preliminaries}

In this section we summarize some definitions and results from \cite{marcus2016polynomial, marcus, arizmendi2018cumulants} on the finite free additive and multiplicative convolutions that will be used throughout the paper. 

%First we look carefully at the definition of the finite free additive and multiplicative convolution, and provide equivalent definitions that will be useful throughout the paper.
\bigskip

\noindent \textbf{Finite free polynomial convolutions.} In what follows we denote the falling factorial by $(d)_k:=\frac{d!}{(d-k)!}=d(d-1)\cdots (d-k+1)$. Then, given two polynomials\footnote{It is possible to extend the definition to all polynomials, and formulas in this paper would be the same up to some normalization. Since our applications are only concerned with the roots of polynomials, we will  restrict our analysis to monic polynomials to keep the notation as simple as possible.} $p,q\in \monicpols$ their finite free additive and multiplicative convolutions, denoted by $p\boxplus_d q$ and $p \boxtimes_d q$ respectively, are defined to be the (unique) polynomials in $\monicpols$ with coefficients given by 
\begin{equation} %\label{eq:coeffsum}
\label{eq:coeffmultiplication}
\frac{a_k^{p\boxplus_d q}}{(d)_k}=\sum_{i+j=k}\frac{a_i^p a_j^q}{(d)_i(d)_j} \qquad\text{and}\qquad a_k^{p\boxtimes_d q}=\frac{a_k^p a_k^q k!}{(d)_k}, \qquad \text{ for } k=1,2,\dots,d.
\end{equation}

Alternatively, these convolutions can be defined via differential operators \cite{marcus2016polynomial}. In particular, as noted by Mirabelli in \cite{mirabelli2021hermitian}, if $D$ denotes differentiation with respect to $x$, and $Q$ and $P$ are polynomials such that $p(x) = P(xD)(x-1)^d$ and $q(x) = Q(xD) (x-1)^d$ then
$$
    [p\boxtimes_d q](x) = P(xD) Q(xD) (x-1)^d = P(xD) q(x) = Q(xD) p(x). 
$$
We also refer the reader to \cite{mirabelli2021hermitian} for a discussion of interesting recent results about the multiplicative convolution. 

\iffalse
 Now we briefly discuss the relation between this polynomial convolutions and the free convolution. For this we will restrict ourselves to the real line, and thus to real-rooted polynomials.  Let $\monicpolreal$ be the family of real-rooted monic polynomials of degree $d$, and let $\monicpolplus$ be the subset of this polynomials having only non-negative roots. 
 \fi
 
 A crucial property of the finite convolutions, is that for all $d\in \N$, the set $\monicpolreal$ is closed under $\boxplus_d$, and the set $\monicpolplus$ is closed under $\boxtimes_d$ \cite{marcus2016polynomial}. Moreover, it is known \cite[Section 16, Exercise 2]{marden1966geometry} that if $p\in \monicpolreal$ and $q\in \monicpolplus$ then $p\boxtimes_d q \in \monicpolreal$.

\iffalse
% $$ m_n^p = \frac{(-1)^{n-1}}{d(n-1)!} \sum_{\pi \in \partlat(n)} (-1)^{\blocks{\pi}-1} N!_\pi (\blocks{\pi}-1)! a_\pi^p. $$
As mentioned in the introduction, one concrete connection between the theories of finite free probability and free probability is that the finite free convolutions converge to the respective free convolution as $d$ goes to infinity. In particular, the analog of Theorem \ref{thm:weakconvergence} for the additive convolution was previously proven in \cite{arizmendi2018cumulants}. 

\begin{theorem}
Consider $(p_d)_{d=1}^\infty$ and $(q_d)_{d=1}^\infty$ with $p_d, q_d\in \monicpolreal$ for every $d$. Furtheremore assume that the $\mu_{p_d}$ and $\mu_{q_d}$ converge weakly to compactly supported probability measures $\mu$ and  $\nu$ respectively. Then, the measures $\mu_{p_d\boxplus q_d}$ converge weakly to $\mu\boxplus \nu$. 
\end{theorem}
\fi
\bigskip

\noindent \textbf{Finite free cumulants.} Inspired by Voiculescu's $R$-transform, which linearizes the additive free convolution, Marcus \cite{marcus} defined the finite $R$-transform, which linearizes the finite free additive convolution $\boxplus_d$. Then, inspired by the combinatorial description of the free cumulants \cite{speicher94multiplicative}, which are the coefficients of the $R$-transform, the finite free cumulants were defined in \cite{arizmendi2018cumulants} as the coefficients of the finite $R$-transform and a combinatorial description of them was provided.  In particular, combinatorial formulas that relate the cumulants with the coefficients and with the moments of a polynomial $p$ where obtained. Since here we do not use the finite $R$-transform, we will directly define the finite free cumulants using the coefficient-cumulant formula, see \cite[Remark 3.5]{arizmendi2018cumulants}.

\begin{definition}[Finite free cumulants] 
\label{defi.finite.free.cumulants}
Let $p\in \monicpols$ with coefficients $(a_n^p)_{n=1}^d$\iffalse defined as in \eqref{eq:defi.coeff}\fi,  the finite free cumulants of $p$, denoted by $(\kappa_n^d(p))_{n=1}^d$ are defined via the cumulant-coefficient formula as follows
\begin{equation}
\label{eq:coefftocumulant}
\kappa_n^d(p) := \frac{(-d)^n}{d(n-1)!} \sum_{\pi \in \partlat(n)} (-1)^{\blocks{\pi}} \frac{N!_\pi a_\pi^p (\blocks{\pi}-1)!}{(d)_{\pi}},   \qquad \text{ for } n=1,2,\dots,d.
\end{equation}
Where $N!_\pi := \prod_{V\in \pi} |V|!$. 
\end{definition}

From \cite{arizmendi2018cumulants}, we know that the coefficients of the polynomial can also be written in terms of its cumulants as follows
\begin{equation}
\label{eq:cumulanttocoefficient}
a_n^p = \frac{(d)_n}{d^n n!} \sum_{\pi \in \partlat(n)} d^{\blocks{\pi}} \mu(0_n, \pi) \kappa_\pi^d(p), \qquad \text{ for } n=1,2,\dots,d.
\end{equation}
Since the moments of a polynomial can be recovered from its coefficients, it is natural to wonder if there is a formula relating moments and finite free cumulants. In \cite[Corollary 4.6]{arizmendi2018cumulants} it was shown that the moment-cumulant formula may be written as
\begin{equation}
\label{eq.momentcumulant}    
m_n(p)=\frac{(-1)^{n-1}}{d^{n+1} (n-1)!} \sum_{\substack{ \tau,\sigma\in P(n) \\  \tau\lor\sigma=1_n }} d^{\blocks{\sigma}+\blocks{\tau}}\mu(0,\sigma)\mu(0,\tau) \kappa_\tau^d(p), \qquad \text{ for } n=1,2,\dots,d. 
\end{equation}
For the formula that writes the cumulants in terms of the moments see \cite[Theorem 4.2]{arizmendi2018cumulants}.

Finally we recall that, as one may expect, the finite free cumulants linearize the finite free convolution.
\begin{proposition}[Proposition 3.6 of \cite{arizmendi2018cumulants}]
For any $p,q\in \monicpols$ and $n=1,\dots,d$ it holds that $$\kappa_n^d(p\boxplus_d q)=\kappa_n^d(p)+\kappa_n^d(q).$$
\end{proposition}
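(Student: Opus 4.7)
The plan is to reformulate the additivity as an identity between formal power series. For $p\in\monicpols$, set
$$B^p(z):=\sum_{k=0}^d \frac{a_k^p}{(d)_k}\,z^k.$$
The first identity in (\ref{eq:coeffmultiplication}) says precisely that the coefficients of $B^{p\boxplus_d q}$ form the Cauchy product of those of $B^p$ and $B^q$, so
$$B^{p\boxplus_d q}(z)=B^p(z)\,B^q(z)\qquad\text{and hence}\qquad \log B^{p\boxplus_d q}(z)=\log B^p(z)+\log B^q(z)$$
as formal power series. The additivity of $\kappa_n^d$ under $\boxplus_d$ will then follow at once by extracting the coefficient of $z^n$ on both sides of this log identity, provided I can establish an equation of the form $\kappa_n^d(p)=c_{n,d}\,[z^n]\log B^p(z)$ with a constant $c_{n,d}$ independent of $p$.

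To prove this last equation, I would start from (\ref{eq:coefftocumulant}), set $\alpha_k^p:=a_k^p/(d)_k$ so that $N!_\pi\,a_\pi^p/(d)_\pi=\prod_{V\in\pi}|V|!\,\alpha_{|V|}^p$, and group the partitions $\pi\in P(n)$ by their block count $k=\blocks\pi$. The standard identity
$$\sum_{\substack{\pi\in P(n)\\ \blocks\pi=k}}\,\prod_{V\in\pi}|V|!\,\alpha_{|V|}^p\;=\;\frac{n!}{k!}\,[z^n]\bigl(B^p(z)-1\bigr)^k,$$
obtained by counting ordered set partitions of $[n]$ with prescribed block sizes, then rewrites the defining formula as
$$\kappa_n^d(p)=\frac{(-d)^n\,n}{d}\,[z^n]\sum_{k=1}^n\frac{(-1)^k}{k}\,\bigl(B^p(z)-1\bigr)^k.$$
Since $(B^p(z)-1)^k$ has minimal $z$-degree $k$, the truncation of the sum at $k=n$ does not affect the $z^n$-coefficient, and the full infinite series equals $-\log B^p(z)$ by the Mercator expansion. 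Therefore the identity holds with $c_{n,d}=(-1)^{n+1}\,n\,d^{n-1}$, and the linearity of $\kappa_n^d$ under $\boxplus_d$ is immediate.

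The main obstacle is really only the bookkeeping of signs and factorials in converting the partition-indexed sum defining $\kappa_n^d$ into a single coefficient extraction from $\log B^p$; conceptually, it reduces to two standard moves, namely expressing a sum over set partitions of fixed block count as the extraction of a coefficient from the $k$-th power of a formal series, and recognizing $\sum_{k\geq 1}(-u)^k/k$ as $-\log(1+u)$. No heavier machinery is needed; once the series $B^p$ is on the table, the rest is formal power series manipulation.
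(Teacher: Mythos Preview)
Your argument is correct. The only cosmetic blemish is the line $B^{p\boxplus_d q}(z)=B^p(z)\,B^q(z)$: as written this is an equality of polynomials of different degrees ($d$ versus $2d$). What (\ref{eq:coeffmultiplication}) actually gives is equality modulo $z^{d+1}$, and since $[z^n]\log B$ for $n\le d$ depends only on the coefficients of $B$ up to degree $n$, this is all you need. You implicitly use this truncation when you extend the sum over $k$ to infinity, so the argument goes through; it would just be cleaner to state the congruence explicitly.

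As for comparison with the paper: the paper does not prove this proposition, it only quotes it from \cite{arizmendi2018cumulants}. In that reference the finite free cumulants are \emph{defined} as the coefficients of Marcus's finite $R$-transform, which is constructed precisely so as to linearize $\boxplus_d$; the combinatorial coefficient--cumulant formula (\ref{eq:coefftocumulant}) is then derived from that. Your route is the reverse: you take (\ref{eq:coefftocumulant}) as the definition and recover, via the exponential formula, that $\kappa_n^d(p)=(-1)^{n+1}\,n\,d^{n-1}\,[z^n]\log B^p(z)$, which is essentially a rediscovery of the finite $R$-transform (up to the change of variable and normalization). So the two approaches meet in the middle; yours has the advantage of being self-contained from the combinatorial definition used in this paper, while the original has linearity baked in from the start.
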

\bigskip

\noindent \textbf{Families of polynomials.} Another important aspect of finite free cumulants, is that they provide a link between the finite free world and the (asymptotic) free world. This is illustrated in the following interesting examples that were originally discovered in \cite[Section 6]{marcus} via the finite $R$-transform.

\begin{example}[Power polynomials and Dirac distributions]
\label{exm.constant.pol.defi}
Fix  $a\in \R$, and for each $d\in \N$ consider the polynomial $p_d(x)=(x-a)^d$ in $\monicpols$. It is easy to see that the coefficients of $p$ are $a_k^p=\binom{d}{k}a^k$ and its moments are $m_n(p_d)=a^n$. After using either the moment-cumulant or the coefficient-cumulant formulas, one obtains that $\kappa_1^d(p_d)=a$ and $\kappa_n^d(p_d)=0$ for $n\geq 2$.

On the other hand, $\mu_{p_d}=\delta_a$ is the Dirac measure at $a$ for every $d$. Thus, when  $d\to\infty$ the limiting distribution is trivially $\mu=\delta_a$. What is worth noticing is that the free cumulants of the limiting distribution are given by $\kappa_1(\mu)=a$ and $\kappa_n(\mu)=0$ for $n\geq 2$; thus $\kappa_n^d(p_d)=\kappa_n(\mu)$ for all $1\leq n\leq d$.

A law of large numbers is valid for the finite free additive convolution and the limiting polynomials are precisely the $p_d(x)$ defined here  \cite[Theorem 6.5]{marcus}.
\end{example}

\begin{example}[Hermite polynomials and semicircular distribution]
\label{exm.hermite.pol.defi}
\iffalse The Hermite polynomials are an important family of orthogonal polynomials.\fi The Hermite polynomial of degree $d$ can be defined explicitly as follows
\[
H_{d}(x):=\sum_{k=0}^{\left\lfloor {\frac {d}{2}}\right\rfloor }
(-1)^{k}\frac {(d)_{2k} }{k!2^{k}}x^{d-2k}.
\]
We will work with the rescaled polynomial $\herm_d(x): = d^{d/2}H_d(\sqrt{d} x)$. It can be easily verified  that the coefficients of $\herm_d$ are given by $a_{2k+1}^{\herm_d}=0$ and 
\[
a_{2k}^{\herm_d}= (-1)^{k}\frac {(d)_{2k} }{k!2^{k}d^{k}},
\]
for every $k$ between 0 and $\lfloor d/2 \rfloor$. It is known \cite[Example 6.1]{arizmendi2018cumulants} that the cumulants of the rescaled Hermite polynomials are given by $\kappa_2^d(\herm_d)=1$ and $\kappa_n^d(\herm_d)=0$ for every $n\neq 2$. And it is a well-known fact that the $\herm_d$ are real-rooted and that,  as $d\to \infty$, the root distributions $\mu_{\herm_d}$ converge to the semicircle law. Again, it is worth noting that the free cumulants of the semicircle law, $\mu_{sc}$, are exactly $\kappa_2(\mu_{sc})=1$ and $\kappa_n(\mu_{sc})=0$ for $n\neq 2$, thus $\kappa_n^d(\herm_d)=\kappa_n(\mu_{sc})$ for all $1\leq n\leq d$.

 Hermite polynomials appear as  limits in  the finite free central limit theorem \cite[Theorem 6.7]{marcus}, and thus play the  role of the Gaussian law in classical probability, and of the semicircular law in free probability.
\end{example}

\begin{example}[Laguerre polynomials and free Poisson distribution]
\label{exm.laguerre.pol.defi}
Following \cite[Chapter V]{szego1939orthogonal}, we define the associated (or generalized) Laguerre polynomials of degree $d$ and parameter $\alpha\in \R$ by
\[
L^{(\alpha)}_{d}(x):=\sum_{k=0}^{d}
\frac{(-x)^{k} (d+\alpha)_{d-k}}{k!(d-k)!}.
\]
When $\alpha\geq -1$ it is a well-known fact that the $(L^{(\alpha)}_{d})_{d\in\N}$ form an orthogonal family of polynomials with respect to a measure supported on $[0, \infty)$, and thus they have real non-negative (and distinct) roots. It can also be seen that for $\alpha=-1,-2,\dots,-d$, the polynomial $L^{(\alpha)}_{d}$ is real-rooted, moreover it has $-\alpha$ roots equal to 0, and all the other roots are distinct. However, for $\alpha\in (-\infty,-1)\backslash \{ -2,-3,\dots,-d\}$, the polynomial $L^{(\alpha)}_{d}$ may have non-real roots.% although numerical experiments suggest that it is real-rooted for $\alpha\in(-2,-1)$ {\color{red} (tal vez esto ya se demostro)}. 

In finite free probability it is more insightful to work with the renormalization 
\[
\lag^{(\lambda)}_d(x):=d! (-d)^{-d} L_d^{((\lambda-1)d)}(dx),
\]
where the constant $d! (-d)^{-d}$ is just to make the polynomial monic. Notice that from the defintion it follows that
\[
a_k^{\lag^{(\lambda)}_d}=\frac{(d)_k(d\lambda)_k}{d^k k!} \qquad \text{for }k=1,\dots,d,
\]
and it can be verified that the finite free cumulants are given by $\kappa_n^d(\lag^{(\lambda)}_d)= \lambda$ for all $n=1,2,\dots,d$ and all $\lambda>0$. These cumulants coincide with the free cumulants of a Marchenko-Pastur distribution of parameter $\lambda$, also known as the free Poisson law. 

Hence for $\lambda>0$, under a suitable approximation $d\to \infty$, the limit root distributions $\mu_{\lag^{(\lambda)}_d}$ of the Laguerre polynomials converge to the Marchenko-Pastur distribution of parameter $\lambda$. By suitable, we mean that for each $\lambda$ we require that all polynomials in the sequence $\{\lag^{(\lambda)}_d\}_{d=1}^\infty$ are real-rooted. Recall, that the above discussion on the real roots of $L^{(\alpha)}_{d}$ implies that $\lag^{(\lambda)}_d\in \monicpolplus$ for $d\geq 1$ and $\lambda\in\{\frac{1}{d},\frac{2}{d}, \dots, \frac{d-2}{d}\}\cup [\frac{d-1}{d},\infty)$. Thus, the approximation works as long as we have $\lambda\geq 1$, or $\lambda=r/s$ is rational and $d$ is a multiple of $s$.

On the other hand, there are examples of Laguerre polynomials (outside the set given above) that have complex roots. For instance, in Remark 6.5 of \cite{arizmendi2018cumulants} it was noticed that if $\lambda=1/3$ and we consider dimension $d=4$, then $\lag^{(1/3)}_4=x^4- \frac{4}{3} x^3 + \frac{1}{6} x^2 + \frac{1}{54} x + \frac{5}{2592}$ has two non-real-roots: $-0.0472193 - 0.0656519i, -0.0472193 + 0.0656519i$.

\iffalse
However, we must warn that $\lag^{(\lambda)}_d$ might no be real-rooted for all $\lambda$. On the other hand, when $\nu_\lambda$ is a free Poisson of parameter $0\leq \lambda <1$ then $\nu_\lambda$ may not belong to $\rrad$. For example, in Remark 6.5 of \cite{arizmendi2018cumulants} it was noticed that if $\lambda=1/3$ and we consider dimension $d=4$, then $P_4(\nu)=x^4- \frac{4}{3} x^3 + \frac{1}{6} x^2 + \frac{1}{54} x + \frac{5}{2592}$ has two non real roots: $0.250561, 1.17721, -0.0472193 - 0.0656519i, -0.0472193 + 0.0656519i$. Therefore, $\nu_{\frac{1}{3}}\notin \rrad$. 

Now recall that for $\lambda>1$ fixed, when $d\to \infty$, the root distributions $\mu_{\lag^{(\lambda)}_d}$ of the Laguerre polynomials converge to the Marchenko-Pastur distribution of parameter $\lambda$, which is the same as the free Poisson law. Notice again, that the free cumulants of the free Poisson law $\mu^{(\lambda)}$ of parameter $\lambda$ are all equal to $\lambda$. Thus $\kappa_n^d(\lag^{(\lambda)}_d)=\kappa_n(\mu^{(\lambda)})$ for all $1\leq d\leq n$, and  hence we may also approach the Marchenko-Pastur law using polynomials for any rational $\lambda=m/n$ as long as we restrict to degrees $d$ multiples of $n$.
\fi

\end{example}

%%%%%%%%%%%%%%%%%%%%%%%%%%%%%%%%%%%%%%%%%%%%%%%%%%
\subsection{Annular permutations}
\label{sec:annularpermutations}

Following the presentation in \cite[Section 5.1]{mingo2017free}, for $\p, \q$ positive integers we define the $(\p, \q)$-annulus to be an annulus in the plane with an \emph{outer} and \emph{inner} circle, where the numbers from $1$ to $\p$ have been arranged in clockwise order on the outer circle, while the numbers from $\p+1$ to $\p+\q$
have been arranged in counterclockwise order on the inner circle. Then, a permutation $\alpha \in S_{\p+\q}$ is said to be annular non-crossing if the cycles of $\alpha$ can be drawn in clockwise order on the $(\p, \q)$-annulus in such a way that:
\begin{enumerate}[(i)]
    \item The cycles do not cross. 
    \item Each cycle  encloses a region, between the inner and outer circle, homeomorphic to the disk with boundary oriented clockwise.
    \item At least one cycle  connects the inner and outer circles.
\end{enumerate}

 \begin{figure}[h]
  \centering
\includegraphics[scale=.35]{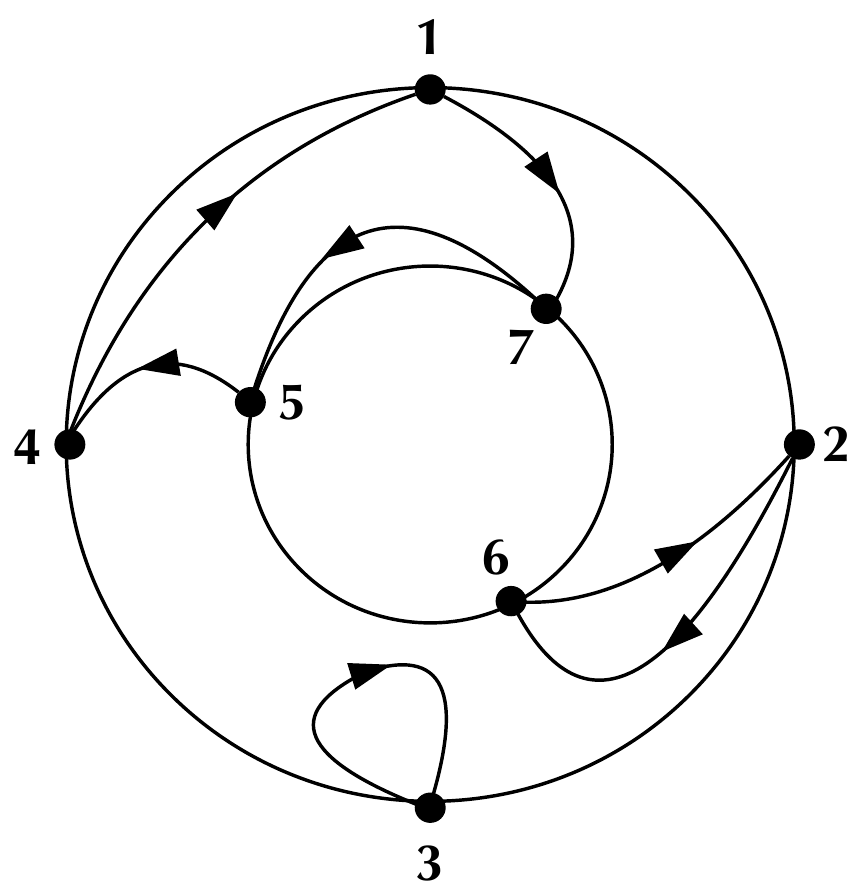}
\caption{Here we show the diagram associated to the permutation $\alpha = (1,7, 5, 4)(3)(2,6)$ in $S_{NC}(4, 3)$. Note that the  ordering  of the elements inside a cycle plays an important role since,  for example, the permutation $\tilde{\alpha} = (1,7,4, 5)(3)(2,6)$ is not in $S_{NC}(4, 3)$ even if $f(\tilde{\alpha})=f(\alpha)$.   }
 \label{fig:annularperm}
\end{figure}

See Figure \ref{fig:annularperm} for an example. We will denote the set of annular non-crossing permutations on the $(\p, \q)$-annulus by $S_{NC}(\p, \q)$.

Annular non-crossing permutations were introduced in \cite{mingo2004annular} to study asymptotic fluctuations of certain random matrix ensembles and are fundamental combinatorial objects in the theory of second order freeness introduced in \cite{mingo2006second}. Although we will not need the notion of second order freeness, to handle sums over $S_{NC}(\p,\q)$ we will use the following functional relation obtained in \cite{collins2006second} between the first and second order Cauchy transforms.  

\begin{lemma}
\label{lem:secondordercauchy}
Let $\mu$ be a measure with free cumulants $(\kappa_n)_{n=1}^\infty$, and let $(\alpha_{\p, \q})_{\p, \q\geq 1}$ be the sequence 
$$\alpha_{\p, \q} := \sum_{\pi \in S_{NC}(\p, \q)} \kappa_\pi. $$
Then the power series $G(z, w) := \frac{1}{zw} \sum_{\p, \q \geq 1} \alpha_{\p, \q} z^{-\p} w^{-\q}$ can be written, at the level of formal power series, in terms of the Cauchy transform of $\mu$ as follows
\begin{equation}
\label{eq:2ndand1stCauchy}
G(z,w)=%\frac{\partial^2}{\partial z  \partial w}\log\left(\frac{G_\mu(w)-G_\mu(z)}{z-w}\right)=
\frac{\partial^2}{\partial z  \partial w}\log\left(\frac{F_\mu(z)-F_\mu(w)}{z-w}\right),\end{equation}
where $F_\mu(z)=1/G_\mu(z)$.

\end{lemma}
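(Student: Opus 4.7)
The plan is to verify the identity by computing both sides as formal power series in $z^{-1}$ and $w^{-1}$ and matching the coefficient of $z^{-p-1}w^{-q-1}$ for each $p,q \geq 1$. The analytic side becomes tractable after a direct differentiation, while the combinatorial side requires decomposing $S_{NC}(\p,\q)$ along its through-cycles, in the spirit of \cite{mingo2004annular}.

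First I would carry out the differentiation on the right-hand side. Letting $F(z,w) := (G_\mu(w) - G_\mu(z))/(z-w)$, a routine application of $\partial_z\partial_w \log F = \partial_z(F^{-1}\partial_w F)$ yields
$$\frac{\partial^2}{\partial z \partial w}\log F(z,w) \;=\; \frac{G_\mu'(z)\,G_\mu'(w)}{(G_\mu(z) - G_\mu(w))^2} \;-\; \frac{1}{(z-w)^2},$$
where the apparent singularities at $z=w$ cancel and each term admits a formal expansion in $z^{-1}, w^{-1}$. The problem thus reduces to showing that the $z^{-p-1}w^{-q-1}$ coefficient of this expression equals $\sum_{\pi \in S_{NC}(\p,\q)} \kappa_\pi$.

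Next I would address the combinatorial side. Using the structure theorem for annular non-crossing permutations, every $\pi \in S_{NC}(\p,\q)$ carries at least one through-cycle connecting the two circles, and cutting the annulus along such a cycle splits the remaining structure into non-crossing partitions supported on planar disk regions. Grouping permutations by the length and attachment points of a distinguished through-cycle and using the classical moment-cumulant relation $m_n = \sum_{\sigma \in NC(n)} \kappa_\sigma$ to collapse the disk partitions into moments of $\mu$, the sum $\alpha_{\p,\q}$ becomes an explicit double sum of products of moments with a through-cycle cumulant weight. Summing over the cycle length, the through-cycle factor reconstructs the geometric series expansion of $(G_\mu(z)-G_\mu(w))^{-2}$; the factor $G_\mu'(z)G_\mu'(w)$ accounts for the two marked points where the through-cycle meets the outer and inner circles; and the subtracted $(z-w)^{-2}$ removes the degenerate empty-annulus contribution.

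The main obstacle is the combinatorial bookkeeping in the through-cycle decomposition: since through-cycles in $S_{NC}(\p,\q)$ can be nested and of variable length, and the non-through parts can themselves surround the annulus in subtle ways, one must verify that the decomposition is a weight-preserving bijection whose $\kappa_\pi$-weights match the analytic coefficient extraction. This is essentially the annular Kreweras-complement analysis carried out in \cite{mingo2004annular, collins2006second}. As an independent cross-check, one can substitute $z = K_\mu(u)$, $w = K_\mu(v)$ and use $K_\mu(z) = z^{-1} + R_\mu(z)$ to rewrite both sides in terms of the $R$-transform, where the identity becomes manifestly combinatorial in the first-order cumulants.
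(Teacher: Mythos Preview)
Your proposal is not wrong in spirit, but it is not a proof either: after the (correct) differentiation
\[
\frac{\partial^2}{\partial z\,\partial w}\log F(z,w)=\frac{G_\mu'(z)\,G_\mu'(w)}{(G_\mu(z)-G_\mu(w))^2}-\frac{1}{(z-w)^2},
\]
you outline a through-cycle decomposition of $S_{NC}(\p,\q)$ but do not carry it out, and in your final paragraph you explicitly defer the actual matching to \cite{mingo2004annular,collins2006second}. That is exactly the nontrivial part. The through-cycle picture you describe is schematic; turning it into a weight-preserving bijection that accounts correctly for multiple through-cycles, their nesting, and the disk pieces between them is the content of the second-order moment--cumulant machinery developed in those references.

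The paper takes the opposite tack: it does not attempt a self-contained argument at all, but simply observes that the identity is the specialization of the second-order functional equation from \cite{collins2006second} (as recorded in \cite[Theorem 39]{mingo2017free}) to the case where all second-order free cumulants vanish, so that the second-order $R$-transform is identically zero. In that framework the formula is immediate. So both your sketch and the paper's proof ultimately rest on the same body of work; the paper is just upfront about invoking it, whereas your write-up reproves the easy analytic half and then points to the literature for the hard combinatorial half. If you want a genuinely independent proof, you would need to actually execute the through-cycle bijection rather than gesture at it.
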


\begin{proof}
This follows directly from Theorem 39 in \cite[Section 5]{mingo2017free}%\footnote{Combined with (5.21) from \cite{mingo2017free} that is formally equivalent to (5.19) in the same manuscript, and essentially articulates that we can work with $G$ instead of the $F=1/G$ appearing in Theorem 39.}
, and corresponds to the case when the second order free cumulants are zero,  which is equivalent to the second order $R$-transform being zero.\footnote{Since the notions of second order cumulants and second order $R$-transform are not used in this paper we will not define them here. Instead, we refer the reader to \cite[Section 5]{mingo2017free} for a survey on these and related notions,  and to \cite{collins2006second} for the original source. }
\end{proof}

\begin{remark}
As pointed out in \cite[Section 5]{mingo2017free} taking logarithm in \eqref{eq:2ndand1stCauchy} is well justified since the power series $(F_\mu(z)-F_\mu(w))/(z-w)$ has constant term equal to $1$.
\end{remark}

%We will use series in two variables, say $z$ and $w$, applied when $z=w$. For the convenience of the reader we include the following lemma regarding the difference quotient.

%%%%%%%%%%%%%%%%%%%%%%%%%%%%%%%%%%%%%%%%%%%%%%%%%%

%%%%%%%%%%%%%%%%%%%%%%%%%%%%%%%%%%%%%%%%%%%%%%%%%%
%%%%%%%%%%%%%%%%%%%%%%%%%%%%%%%%%%%%%%%%%%%%%%%%%%
\section{Finite free multiplicative convolution}
\label{sec:fin.free.mult.conv}

\iffalse
Recall, from \cite{marcus2016polynomial}, that the finite free multiplicative convolution of two polynomials $p(x)=\sum_{i=0}^d x^{d-i} (-1)^i a^p_i$ and $q(x)=\sum_{i=0}^d x^{d-i} (-1)^i a^q_i$, with $a^p_0=a^q_0=1$, is given by 
$$(p\boxtimes_d q) (x)=\sum_{i=0}^d x^{d-i} (-1)^i \frac{b_ia_i}{\binom{d}{i}}.$$
\fi

\subsection{Cumulants for multiplicative finite free convolution}

In this section we will prove Theorem \ref{thm.cumulant.of.products}. Let us begin with the proof of the first equation mentioned in the theorem, namely, we will now show that for $p$ and $q$ polynomials of degree $d$, the finite free cumulants of $p\boxtimes_d q$ are given by
\begin{equation}
\label{eq.cumulant.of.products}
\kappa_n^d(p\boxtimes_d q)=\frac{(-1)^{n-1}}{d^{n+1} (n-1)!} \sum_{\substack{ \sigma, \tau\in P(n) \\  \sigma\lor \tau=1_n }} d^{\blocks{\sigma}+\blocks{\tau}}\mu(0_n,\sigma)\mu(0_n,\tau) \kappa_\sigma^d(p) \kappa_\tau^d(q).
\end{equation}

\iffalse
First recall the relation between the coefficients of $p$ and $q$, and the ones of $p\boxtimes_d q$,

\begin{equation}
\label{eq:coeffmultiplication}
a_n^{p\boxtimes_d q}=\frac{a_n^p a_n^q n!}{(d)_n}.
\end{equation}

We will then use the following lemma from \cite{arizmendi2018cumulants} which relates coefficients with finite free cumulants of a polynomials.

\begin{lemma}[Proposition 3.4 in \cite{arizmendi2018cumulants}]
Let $p(x) = \sum_{i=0}^d x^{d-i} (-1)^i a_i^p$ be a polynomial of degree $d$ and let $(\kappa_n^p)_{n=1}^d$ be its finite free cumulants. Then
\begin{equation}
\label{eq:cumulanttocoefficient}
a_n^p = \frac{(d)_n}{d^n n!} \sum_{\pi \in \partlat(n)} d^{\blocks{\pi}} \mu(0_n, \pi) \kappa_\pi^p, \quad n\in \mathbb{N}
\end{equation}
\begin{equation}
\label{eq:coefftocumulant}
\kappa_n^p = \frac{(-d)^n}{d(n-1)!} \sum_{\pi \in \partlat(n)} (-1)^{\blocks{\pi}} \frac{N!_\pi a_\pi^p (\blocks{\pi}-1)!}{(d)_{\pi}}    \end{equation}
\end{lemma}
With these formulas in hand we are now in position to prove Theorem \ref{thm.cumulant.of.products}. 
\fi
In short, the proof of this identity exploits that the coefficients of $p\boxtimes_d q$ can be expressed easily in terms of the coefficients of $p$ and $q$ (recall (\ref{eq:coeffmultiplication})) and that, on the other hand, the cumulants of a polynomial are related to its coefficients by (\ref{eq:coefftocumulant}) and (\ref{eq:cumulanttocoefficient}). 

\begin{proof}[Proof of (\ref{eq.cumulant.of.products})]
Combining (\ref{eq:coeffmultiplication}) with (\ref{eq:coefftocumulant}) we get
\begin{equation}
\label{eq:firststep}
\kappa^d_n(p\boxtimes_d q) =\frac{(-d)^n}{d(n-1)!} \sum_{\pi \in \partlat(n)} (-1)^{\blocks{\pi}} \frac{N!_\pi^2 a_\pi^p a_\pi^q (\blocks{\pi}-1)!}{(d)_{\pi}^2}. 
\end{equation}
Now we will use (\ref{eq:cumulanttocoefficient}) in each of the terms in the above sum. Start by noting that
\begin{align*}
a_\pi^p & = \frac{(d)_{\pi} }{d^n N!_\pi} \prod_{V\in \pi} \left( \sum_{\sigma \in \partlat(|V|)} d^{\blocks{\sigma}} \mu(0_{|V|}, \sigma) \kappa_\sigma^d(p)\right)
\\ & = \frac{(d)_{\pi} }{d^n N!_\pi} \sum_{\sigma \leq \pi} d^{\blocks{\sigma}} \mu(0_n, \sigma) \kappa_\sigma^d(p),    \qquad \qquad \text{since } \mu \text{ is multiplicative.}
\end{align*}
Similarly for $a_\pi^q$. Substituting these formulas we get
\begin{align*}
   \frac{N!_\pi^2 a_\pi^p a_\pi^q}{(d)_{\pi}^2} & = \frac{1}{d^{2n}} \left(\sum_{\sigma \leq \pi} d^{\blocks{\sigma}} \mu(0_n, \sigma) \kappa_\sigma^d(p)  \right) \left( \sum_{\tau \leq \pi} d^{\blocks{\tau}} \mu(0_n, \tau) \kappa_\tau^d(q) \right) 
   \\ & = \frac{1}{d^{2n}} \sum_{\substack{ \tau,\sigma\in P(n) \\  \tau\lor\sigma\leq \pi  }} d^{\blocks{\sigma}+ \blocks{\tau}} \mu(0_n, \sigma) \mu(0_n, \tau) \kappa_\sigma^d(p) \kappa_\tau^d(q).
\end{align*}
Plugging this back into (\ref{eq:firststep}) we get
\begingroup
\allowdisplaybreaks
\begin{align*}
    \kappa^d_n(p\boxtimes_d q) &=\frac{(-1)^n}{d^{n+1}(n-1)!} \sum_{\pi \in \partlat(n)} (-1)^{\blocks{\pi}} (\blocks{\pi}-1)! \sum_{\substack{ \tau,\sigma\in P(n) \\  \tau\lor\sigma\leq \pi  }} d^{\blocks{\sigma}+\blocks{\tau}} \mu(0_n, \sigma) \mu(0_n, \tau) \kappa_\sigma^d(p) \kappa_\tau^d(q)
    \\ & =  \frac{(-1)^n}{d^{n+1} (n-1)!} \sum_{\substack{ \tau,\sigma\in P(n)   }} d^{\blocks{\sigma}+\blocks{\tau}}\mu(0_n,\sigma)\mu(0_n,\tau) \kappa_\sigma^d(p) \kappa_\tau^d(q) \sum_{\pi\geq \sigma \lor \tau } (-1)^{\blocks{\pi}}(\blocks{\pi}-1)!
    \\ &= \frac{(-1)^{n-1}}{d^{n+1} (n-1)!} \sum_{\tau,\sigma\in P(n)} d^{\blocks{\sigma}+\blocks{\tau}}\mu(0_n,\sigma)\mu(0_n,\tau) \kappa_\sigma^d(p) \kappa_\tau^d(q) \sum_{\pi\geq \sigma \lor \tau } \mu(\pi,1_n)
    \\ & = \frac{(-1)^{n-1}}{d^{n+1} (n-1)!} \sum_{\substack{ \tau,\sigma\in P(n) \\  \tau\lor\sigma=1_n }} d^{\blocks{\sigma}+\blocks{\tau}}\mu(0_n,\sigma)\mu(0_n,\tau) \kappa_\sigma^d(p) \kappa_\tau^d(q),
\end{align*}
\endgroup
where the last equality follows from the well known identity $$\sum_{\pi\geq \sigma} \mu(\pi, 1_\pi) = \delta_{\sigma, 1_n}.$$
\end{proof}

\begin{remark}
The above theorem may be proven in a more conceptual way by using the structure of the M\"obius algebra of a lattice. We do this in  Appendix \ref{sec:proofviaMobius}.
\end{remark}

To show the second part of Theorem \ref{thm.cumulant.of.products}, namely the equation  
\begin{equation}
\label{eq:momentofproducts}
m_n(p\boxtimes_d q)=\frac{(-1)^{n-1}}{d^{n+1} (n-1)!} \sum_{\substack{ \sigma, \tau\in P(n) \\  \sigma\lor \tau=1_n }} d^{\blocks{\sigma}+\blocks{\tau}}\mu(0_n,\sigma)\mu(0_n,\tau) \kappa_\sigma^d(p) m_\tau(q),
\end{equation}
we will use Laguerre polynomials to relate the moments of a polynomial to the cumulants of a multiplicative convolution. 

To give some context, recall that in free probability the free Poisson distribution (also called  Marchenko-Pastur)  has free cumulants equal to 1. As explained in Section \ref{sec:preliminaries}, the finite free analog is a normalized Laguerre polynomial, given explicitly by
\begin{equation*}
\label{eq:1poisson}
\lag(x):=\lag^{(1)}(x) = \sum_{i=0}^d (-1)^i \frac{ (d)_i^2}{d^i i!} x^{d-i}, 
\end{equation*}
which satisfies that $\kappa_n^d(\lag)=1$, for $n=1,\ldots,d$. More generally, a free compound Poisson is a random variable whose  sequence of free cumulants equals the sequence of moments of another random variable (see \cite[Section 12]{nica2006lectures}). In this case, the finite free analog was introduced by Marcus in \cite{marcus}. 

Our next lemma, inspired by a well known result in free probability, shows that in the finite free setting, compound Poissons can also be obtained via the multiplicative convolution.

\begin{lemma}[Compound Poissons via multiplication] \label{lem:cumprodpoisson}
Let $p$ be any polynomial and let $\lag(x)$ be as above, then 
$$m_n(p) = \kappa_n^d(p\boxtimes_d \lag),$$
for every $n=1, \dots, d$. 
\end{lemma}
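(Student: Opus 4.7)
The plan is to directly compare two formulas that we already have at hand: the cumulant-cumulant formula for $p\boxtimes_d q$ proved earlier in this section (equation (\ref{eq.cumulant.of.products})), and the moment-cumulant formula (\ref{eq.momentcumulant}) recalled in the preliminaries. The key observation is that $\lag$ has been specifically designed so that $\kappa_n^d(\lag)=1$ for every $n=1,\dots,d$, which makes its cumulants act as a neutral element when plugged into products of the form $\kappa_\tau^d(\lag)=\prod_{V\in f(\tau)}\kappa_{|V|}^d(\lag)$.

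Concretely, I would first apply (\ref{eq.cumulant.of.products}) with $q=\lag$ to write
\begin{equation*}
\kappa_n^d(p\boxtimes_d \lag)=\frac{(-1)^{n-1}}{d^{n+1}(n-1)!}\sum_{\substack{\sigma,\tau\in P(n)\\ \sigma\vee\tau=1_n}} d^{\blocks{\sigma}+\blocks{\tau}}\mu(0_n,\sigma)\mu(0_n,\tau)\kappa_\sigma^d(p)\kappa_\tau^d(\lag).
\end{equation*}
Second, I would substitute $\kappa_\tau^d(\lag)=1$ for every $\tau\in P(n)$, using that this holds for all $n\leq d$, to obtain
\begin{equation*}
\kappa_n^d(p\boxtimes_d \lag)=\frac{(-1)^{n-1}}{d^{n+1}(n-1)!}\sum_{\substack{\sigma,\tau\in P(n)\\ \sigma\vee\tau=1_n}} d^{\blocks{\sigma}+\blocks{\tau}}\mu(0_n,\sigma)\mu(0_n,\tau)\kappa_\sigma^d(p).
\end{equation*}
Third, I would recognize the right-hand side as exactly the moment-cumulant formula (\ref{eq.momentcumulant}) applied to $p$ (after the cosmetic relabeling $\sigma\leftrightarrow\tau$ in the symmetric sum), which equals $m_n(p)$. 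This gives the desired identity $m_n(p)=\kappa_n^d(p\boxtimes_d \lag)$ for each $n=1,\dots,d$.

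There is no real obstacle here: the lemma is essentially a tautology once one puts the two formulas side by side, and the only thing to check is that $\kappa_\tau^d(\lag)=1$ for every partition $\tau\in P(n)$, which follows from the multiplicativity of $\kappa_\cdot^d$ over blocks together with the fact, recalled in Example \ref{exm.laguerre.pol.defi} (for $\lambda=1$), that $\kappa_k^d(\lag)=1$ for all $1\leq k\leq d$. The restriction $n\leq d$ in the statement is exactly what guarantees that every block of $\tau$ has size at most $d$, so that the values $\kappa_{|V|}^d(\lag)=1$ are available in the regime where the finite free cumulants of $\lag$ are defined.
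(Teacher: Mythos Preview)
Your argument is correct and non-circular: (\ref{eq.cumulant.of.products}) is proved before this lemma and independently of it, and (\ref{eq.momentcumulant}) is quoted from \cite{arizmendi2018cumulants}, so plugging $\kappa_\tau^d(\lag)=1$ into (\ref{eq.cumulant.of.products}) and matching with (\ref{eq.momentcumulant}) legitimately yields $m_n(p)=\kappa_n^d(p\boxtimes_d\lag)$.

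However, the paper takes a different route, and the difference is deliberate. The paper works entirely at the level of \emph{coefficients}: it starts from the cumulant--coefficient formula (\ref{eq:coefftocumulant}) combined with (\ref{eq:coeffmultiplication}), then uses the explicit coefficients $a_k^{\lag}=\frac{(d)_k^2}{d^k k!}$ of the Laguerre polynomial to collapse the sum to $\frac{(-1)^n}{d(n-1)!}\sum_{\pi}(-1)^{\blocks{\pi}}N!_\pi a_\pi^p(\blocks{\pi}-1)!$, which is the moment--\emph{coefficient} relation (essentially Newton's identities). In particular, the paper's proof does \emph{not} use the moment--cumulant formula (\ref{eq.momentcumulant}). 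The payoff is explained right after the proof of (\ref{eq:momentofproducts}): with this direct argument in hand, the lemma together with (\ref{eq.cumulant.of.products}) gives an alternative, shorter derivation of (\ref{eq.momentcumulant}) itself. Your proof, by contrast, \emph{consumes} (\ref{eq.momentcumulant}) as an input, so it cannot serve that purpose---but it is certainly the cleanest way to see why the lemma is true once (\ref{eq.momentcumulant}) is accepted.
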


\begin{proof}
As usual, we write $p$ in terms of its coefficients as $p(x) = \sum_{i=0}^d x^{d-i} (-1)^i a^p_i$. Combining (\ref{eq:cumulanttocoefficient}) and (\ref{eq:coeffmultiplication}) we get 
\begin{align*}
\kappa_n^d(p\boxtimes_d \lag) & = \frac{(-d)^n}{d(n-1)!} \sum_{\pi \in \partlat(n)} (-1)^{\blocks{\pi}} \frac{N!_\pi^2 a_\pi^p a_\pi^{\lag}  (\blocks{\pi}-1)!}{(d)_{\pi}^2}
\\ &= \frac{(-1)^n}{d(n-1)!} \sum_{\pi \in \partlat(n)} (-1)^{\blocks{\pi}} N!_\pi a_\pi^p (\blocks{\pi}-1)!
\end{align*}
and the last expression is precisely the right-hand side of the moment-coefficient formula given above in (\ref{eq.momentcumulant}). 
\end{proof}

We can now show the second part of Theorem \ref{thm.cumulant.of.products}.

\begin{proof}[Proof of (\ref{eq:momentofproducts})] Using Lemma \ref{lem:cumprodpoisson} together with \eqref{eq.cumulant.of.products} we get
\begin{eqnarray*}
m_n(p\boxtimes_d q)&=&\kappa_n^d(p\boxtimes_d q \boxtimes_d\lag)=\frac{(-1)^{n-1}}{d^{n+1} (n-1)!} \sum_{\substack{ \sigma, \tau\in P(n) \\  \sigma\lor \tau=1_n }} d^{\blocks{\sigma}+\blocks{\tau}}\mu(0_n,\sigma)\mu(0_n,\tau) \kappa_\sigma^d(p) \kappa_\tau^d(q \boxtimes_d\lag) \\
&=&\frac{(-1)^{n-1}}{d^{n+1} (n-1)!} \sum_{\substack{ \sigma, \tau\in P(n) \\  \sigma\lor \tau=1_n }} d^{\blocks{\sigma}+\blocks{\tau}}\mu(0_n,\sigma)\mu(0_n,\tau) \kappa_\sigma^d(p) m_\tau^d(q).
\end{eqnarray*}
\end{proof}

As mentioned in the introduction,  when $q(x)=(x-1)^d$,  $m_n(q)=1$ for every $n$, so equation (\ref{eq:momentofproducts}) becomes the moment-cumulant formula proven in \cite{arizmendi2018cumulants}. In some sense, this means that the combinatorial  theory of addition can be recovered from that of multiplication. On the other hand, observe that we could have proven Lemma \ref{lem:cumprodpoisson} by combining equation (\ref{eq:momentofproducts}) with the moment-cumulant formula from \cite{arizmendi2018cumulants}, but we have chosen to present our results in the above form to underscore that Lemma \ref{lem:cumprodpoisson} has a simple direct proof and to provide an alternative (and shorter) proof of the existing moment-cumulant formula.

%%%%%%%%%%%%%%%%%%%%%%%%%%%%%%%%%%%%%%%%%%%%%%%%%
\subsection{Finite free multiplication tends to free multiplication}

In this section we will prove Theorems \ref{thm2} and \ref{thm:weakconvergence} by analyzing the asymptotic behavior of the formulas (\ref{eq.cumulant.of.products}) and (\ref{eq:momentofproducts})  as $d\to\infty$. The results appearing in this section articulate in a precise way that the finite free multiplicative convolution converges to the free multiplicative convolution. 
 
Our main tool is the  following lemma which shows how to write a formula that runs over non-crossing partitions as a formula summing over pairs of certain partitions in $\partlat(n)$. This lemma is a particular case of Theorem \ref{thm:generalformula}, which we state and prove in Section \ref{sec:genusformula}, but can also be shown directly using elementary enumerative arguments, as we explain  in Appendix \ref{appendix:alternativeproof}. 

\begin{lemma} \label{Cor:NCtoP}
Let  $(u_j)_{j=1}^\infty$, $(v_j)_{j=1}^\infty$ be two sequences of scalars, then for every $n$ 

\begin{equation} %\label{eq:prop1}
    \sum_{\pi\in NC(n)} u_\pi v_{Kr(\pi)}=\frac{(-1)^{n-1}}{(n-1)!} \sum_{\substack{ \sigma, \tau\in P(n) \\  \sigma\lor \tau=1_n \\ \blocks{\sigma}+\blocks{\tau}=n+1}} \mu(0_n,\sigma)\mu(0_n,\tau) u_\sigma v_\tau.
\end{equation}

\end{lemma}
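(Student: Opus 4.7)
My plan is to prove this identity by translating both partition sums into permutation sums over $S_n\times S_n$, invoking the Euler/Hurwitz cycle-count inequality to force the product $\beta\alpha$ to be an $n$-cycle, and then using conjugation symmetry to reduce to non-crossing factorizations of a fixed long cycle.

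The first step converts the right-hand side from a partition sum into a permutation sum. The key observation is that $\mu(0_n,\sigma)=(-1)^{n-|\sigma|}\prod_{V\in\sigma}(|V|-1)!$ encodes the number of permutations $\alpha\in S_n$ with $f(\alpha)=\sigma$, each weighted by the sign $(-1)^{n-\#\alpha}$. Hence $\mu(0_n,\sigma)u_\sigma=\sum_{\alpha:\,f(\alpha)=\sigma}(-1)^{n-\#\alpha}u_\alpha$, since $u_\alpha=u_{f(\alpha)}$ only depends on cycle type. Performing this substitution on both $\sigma$ and $\tau$, and using that $\#\alpha+\#\beta=n+1$ makes the accumulated sign $(-1)^{n-1}\cdot(-1)^{2n-(n+1)}$ collapse to $+1$, the RHS becomes
$$\frac{1}{(n-1)!}\sum_{\substack{\alpha,\beta\in S_n\\ f(\alpha)\lor f(\beta)=1_n\\ \#\alpha+\#\beta=n+1}}u_\alpha v_\beta.$$

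Next I would invoke the classical Euler/Hurwitz inequality: whenever $\langle\alpha,\beta\rangle$ acts transitively on $[n]$, one has $\#\alpha+\#\beta+\#(\beta\alpha)\leq n+2$, with equality characterizing the planar (genus zero) situation. Transitivity is guaranteed precisely by $f(\alpha)\lor f(\beta)=1_n$, so the constraint $\#\alpha+\#\beta=n+1$ pins $\#(\beta\alpha)=1$, meaning $\beta\alpha$ must be a single $n$-cycle in $S_n$.

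Finally, I would stratify the sum by the value $\gamma'=\beta\alpha$. Simultaneous conjugation on $(\alpha,\beta)$ preserves cycle types (hence $u_\alpha v_\beta$) while acting transitively on the $(n-1)!$ many $n$-cycles of $S_n$, so the total sum equals $(n-1)!$ times its restriction to $\beta\alpha=\gamma=(1,2,\ldots,n)$. The minimal factorizations $\beta\alpha=\gamma$ with $\#\alpha+\#\beta=n+1$ are in classical bijection with $NC(n)$: each non-crossing $\pi$ corresponds to the pair $(P_\pi,\gamma P_\pi^{-1})$, where $P_\pi$ is the canonical cycle permutation of $\pi$, and one has $f(P_\pi)=\pi$ and $f(\gamma P_\pi^{-1})=Kr(\pi)$. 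Cancelling the $(n-1)!$ prefactor then yields $\sum_{\pi\in NC(n)}u_\pi v_{Kr(\pi)}$, which is the LHS. The main obstacle I anticipate is this last identification: verifying with the right orientation conventions that minimal factorizations of $\gamma$ are parameterized by $NC(n)$ and that the second factor really produces the Kreweras complement rather than some other non-crossing partition. This is standard (going back to Biane) but is the one step where care is needed.
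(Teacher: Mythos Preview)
Your proposal is correct and follows essentially the same route as the paper's main proof, which obtains the lemma as the $k=0$ specialization of Theorem~\ref{thm:generalformula}: both arguments replace the partition sum by a permutation sum via $|\mu(0_n,\pi)|=\prod_V(|V|-1)!$, invoke the Euler/Jacques--Cori formula (your Hurwitz inequality is Theorem~\ref{them:JaquesCori}) to force the product to be a single $n$-cycle, use conjugation invariance to reduce to the fixed cycle $\gamma_n=(1,\dots,n)$, and finally identify the genus-zero factorizations with $NC(n)$ and the second factor with the Kreweras complement. Your anticipated orientation worry is harmless here since $\gamma\alpha^{-1}$ and $\alpha^{-1}\gamma$ are conjugate and $v$ depends only on cycle type; the paper also offers an alternative enumerative proof in Appendix~\ref{appendix:alternativeproof} via labelled spanning trees of a complete bipartite graph, which you might enjoy as a genuinely different argument.
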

 
With Theorem \ref{thm.cumulant.of.products} and Lemma \ref{Cor:NCtoP} in hand, the proofs of the expressions
\begin{equation}
\label{eq:asympcumulantcumulant}
\kappa_n^d(p\boxtimes_d q)= \sum_{\pi\in NC(n)} \kappa_\pi^d(p) \kappa_{Kr(\pi)}^d(q)+ O(1/d)
\end{equation}
and 
\begin{equation}
\label{eq:asympmomentcumulant}
m_n(p\boxtimes_d q)= \sum_{\pi\in NC(n)} \kappa_\pi^d(p) m_{Kr(\pi)}(q)+ O(1/d)
\end{equation}
become straightforward. 

\begin{proof}[Proof of Theorem \ref{thm2}] From (\ref{eq.cumulant.of.products})
\begin{align*}
\kappa_n^d(p\boxtimes_d q) &=\frac{(-1)^{n-1}}{d^{n+1} (n-1)!} \sum_{\substack{ \sigma, \tau\in P(n) \\  \sigma\lor \tau=1_n }} d^{\blocks{\sigma}+\blocks{\tau}}\mu(0_n,\sigma)\mu(0_n,\tau) \kappa_\sigma^d(p) \kappa_\tau^d(q) 
\\ &= \frac{(-1)^{n-1}}{ (n-1)!} \sum_{\substack{ \sigma, \tau\in P(n) \\  \sigma\lor \tau=1_n \\ \blocks{\sigma}+\blocks{\tau}=n+1}} \mu(0_n,\sigma)\mu(0_n,\tau) \kappa_\sigma^d(p) \kappa_\tau^d(q) +O(1/d). 
\end{align*}
 So by applying Lemma \ref{Cor:NCtoP} to the above expression we conclude (\ref{eq:asympcumulantcumulant}). Similarly we  obtain (\ref{eq:asympmomentcumulant}) from (\ref{eq:momentofproducts}).
\end{proof}
 
We are now ready to show the following.

\begin{proposition}[Convergence in moments]
\label{prop:convinmoments}
Let $\mu$ and $\nu$ be probability measures with all of their moments finite. Assume that the moments of the sequences $(p_d)_{d=1}^\infty$ and $(q_d)_{d=1}^\infty$ converge to the moments of $\mu$ and $\nu$.  Then the moments of $(p_d\boxtimes_d q_d)_{d=1}^\infty$ converge to the moments of $\mu\boxtimes \nu$.
\end{proposition}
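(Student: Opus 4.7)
The plan is to reduce the problem to Theorem \ref{thm2} (more precisely, the moment version \eqref{eq:asympmomentcumulant}) combined with a preliminary step showing that the finite free cumulants $\kappa_n^d(p_d)$ converge to the free cumulants $\kappa_n(\mu)$. Once both ingredients are in place, the conclusion follows by plugging into the free multiplicative moment-cumulant identity \eqref{eq:freemomencumulantproduct}.

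\textbf{Step 1 (Convergence of finite free cumulants).} I would first prove, by strong induction on $n$, that if $m_k(p_d)\to m_k(\mu)$ for every $k$, then $\kappa_n^d(p_d)\to\kappa_n(\mu)$ as $d\to\infty$. The base case $n=1$ is immediate since $\kappa_1^d(p_d)=m_1(p_d)$. For the inductive step, I would use the finite free moment-cumulant formula \eqref{eq.momentcumulant}; since the sum is supported on pairs with $\blocks{\sigma}+\blocks{\tau}\le n+1$, the leading-order ($d$-independent) contribution isolates exactly the pairs with $\blocks{\sigma}+\blocks{\tau}=n+1$, yielding by Lemma \ref{Cor:NCtoP} the identity
\[
m_n(p_d)=\sum_{\pi\in NC(n)}\kappa_\pi^d(p_d)+O(1/d),
\]
where the implicit constant depends only on $n$ and on the (bounded) finite free cumulants of $p_d$ up to order $n$. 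Separating the term $\pi=1_n$ from the rest and applying the induction hypothesis (together with the classical moment-cumulant formula $m_n(\mu)=\sum_{\pi\in NC(n)}\kappa_\pi(\mu)$) gives $\kappa_n^d(p_d)\to\kappa_n(\mu)$. The same conclusion holds for $q_d$ and $\nu$.

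\textbf{Step 2 (Passing to the limit in the product formula).} By \eqref{eq:asympmomentcumulant} applied to $p_d$ and $q_d$,
\[
m_n(p_d\boxtimes_d q_d)=\sum_{\pi\in NC(n)}\kappa_\pi^d(p_d)\,m_{Kr(\pi)}(q_d)+O(1/d).
\]
For each fixed $n$ the sum has finitely many terms, and each factor appearing in each summand converges: the cumulants of $p_d$ by Step 1, and the moments of $q_d$ by hypothesis. Moreover, since these quantities are bounded in $d$, the constant hidden in the $O(1/d)$ term (which is a polynomial expression in finitely many cumulants of $p_d$ and moments of $q_d$) is uniformly bounded, so it genuinely goes to zero. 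Therefore
\[
\lim_{d\to\infty}m_n(p_d\boxtimes_d q_d)=\sum_{\pi\in NC(n)}\kappa_\pi(\mu)\,m_{Kr(\pi)}(\nu),
\]
which by \eqref{eq:freemomencumulantproduct} equals $m_n(\mu\boxtimes\nu)$, as required.

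\textbf{Main obstacle.} The principal subtlety is Step 1: one must confirm that the $O(1/d)$ term in \eqref{eq.momentcumulant} can be bounded by a constant depending only on $n$ and on $\max_{k\le n}|\kappa_k^d(p_d)|$, since otherwise the inductive step would not close. This is not serious because the sum in \eqref{eq.momentcumulant} is finite and involves only cumulants of order at most $n$; a careful bookkeeping of the $d$-powers shows the non-leading terms carry at least one extra factor of $1/d$ with a $d$-independent combinatorial coefficient. Once this uniformity is established, the rest of the argument is essentially substitution and a passage to the limit termwise in a finite sum.
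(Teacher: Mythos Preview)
Your proposal is correct and follows essentially the same route as the paper: the paper's proof also splits into (i) the fact that $\kappa_n^d(p_d)\to\kappa_n(\mu)$, which it simply cites from \cite{arizmendi2018cumulants}, and (ii) plugging into \eqref{eq:asympmomentcumulant} and identifying the limit via \eqref{eq:freemomencumulantproduct}. Your Step~1 just reproves that cited fact rather than invoking it; otherwise the arguments are identical.

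One small remark on your Step~1: as you note, the $O(1/d)$ remainder in \eqref{eq.momentcumulant} does contain $\kappa_n^d(p_d)$ itself (from the pairs with $\tau=1_n$ and $\sigma\neq 0_n$), so the induction is not quite closed by the bound you state. The clean fix is to observe that the \emph{total} coefficient of $\kappa_n^d(p_d)$ in \eqref{eq.momentcumulant} is exactly $(d)_n/d^n$, so one may solve
\[
\kappa_n^d(p_d)=\frac{d^n}{(d)_n}\Big(m_n(p_d)-R_n^d\Big),
\]
where $R_n^d$ is a polynomial in $\kappa_1^d(p_d),\dots,\kappa_{n-1}^d(p_d)$ with coefficients that are bounded rational functions of $d$; then the induction hypothesis and $d^n/(d)_n\to 1$ give convergence directly. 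This is presumably what the ``careful bookkeeping'' you mention amounts to.
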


\begin{proof}
As shown in \cite{arizmendi2018cumulants}, from the moment-cumulant formula it follows that the finite free cumulants converge to the free cumulants. So, the assumption that  $\lim_{d\to \infty} m_n(p_d) = m_n(\mu)$ for all $n$, translates into $\lim_{d\to \infty} \kappa_n^d(p_d) = \kappa_n (\mu)$. Similarly we obtain $\lim_{d\to \infty} \kappa_n^d(q_d) = \kappa_n(\nu)$. 

 Then, from (\ref{eq:asympmomentcumulant}) we get that
\begin{align*}
\lim_{d\to \infty} m_n(p_d\boxtimes_d q_d) = \lim_{d\to \infty}   \sum_{\pi \in NC(n)}  \kappa_\pi^d(p_d) m_{Kr(\pi)}(q_d)+O(1/d)
 = \sum_{\pi \in NC(n)} \kappa_\pi(\mu) m_{Kr(\pi)}(\nu).
\end{align*}
So, by the formula for moments of $\mu\boxtimes\nu$  given in (\ref{eq:freemomencumulantproduct}) the right hand side of the above chain of equations is precisely $m_n(\mu \boxtimes \nu)$, and the proof is concluded. 
\end{proof}

Note that the above proposition holds even when $\mu$ and $\nu$ are not supported on the real line and  the polynomials $p_d$ and $q_d$ are not real-rooted. A case of interest, for example, is that of families of polynomials with roots on the unit circle.  

Of course, if the $\mu$ and $\nu$ are compactly supported probability measures on $\mathbb{R}$ and the $p_d$ and $q_d$ are real-rooted, the convergence in moments can be turned into weak convergence of the empirical root distributions  as stated in Theorem \ref{thm:weakconvergence}.

\iffalse
To be clear, if $\lambda_1, \dots, \lambda_d $ are the roots of a polynomial $p(x)$, by the empirical root distribution of $p$ we mean the measure {\color{red} (Daniel: esto está en la sección 2.2, con el nombre $\mu_p$, aunque sólo para polinomios reales, igual puedo extender la definición, y llamarle empirical root distribution para ya sólo referirse a los prelims y poder usar la notación $\mu_p$) } 
$$\frac{1}{d} \sum_{i=1}^d \delta_{\lambda_i}. $$

\begin{theorem}[Weak convergence]
%\label{thm:weakconvergence}
Let $\mu$ and $\nu$ be probability measures supported on a compact subset of the real line. Let $(p_d)_{d=1}^\infty$ and $(q_d)_{d=1}^\infty$ be sequences of polynomials with $p_d\in \monicpolreal$ and $q_d \in \monicpolplus$, and assume that the empirical root distributions of these sequences converge weakly to $\mu$ and $\nu$ respectively. Then,
the empirical root distributions of the sequence $(p_d\boxtimes_d q_d)_{d=1}^\infty$ converge weakly to $\mu\boxtimes \nu$. 
\end{theorem}
\fi

\begin{proof}[Proof of Theorem \ref{thm:weakconvergence}]
If the supports of the empirical root distributions of $p_d$ and $q_d$ are uniformly bounded, then convergence in moments and weak convergence are equivalent, so in this case the theorem follows from Proposition \ref{prop:convinmoments}. 

When the supports of the root distributions are not uniformly bounded one can define auxiliary sequences of \emph{truncated polynomials}, say $(\hat{p}_d)_{d=1}^\infty$ and $(\hat{q}_d)_{d=1}^\infty$, whose roots still converge in distribution to $\mu$ and $\nu$ respectively, but that now have uniformly bounded supports. After noting that, by the above paragraph,  this  implies that the root distributions of $(\hat{p}_d\boxtimes_d \hat{q}_d)_{d=1}^\infty$  converges weakly to $\mu\boxtimes \nu$, one can exploit that $\boxtimes_d$ preserves interlacing of polynomials to argue that the root distributions of $(p_d\boxtimes_d q_d)_{d=1}^\infty$ and $(\hat{p}_d\boxtimes_d \hat{q}_d)_{d=1}^\infty$ have the same weak limit. We refer the reader to Appendix \ref{sec:truncation} for the details on how this is done. 
\end{proof}

%%%%%%%%%%%%%%%%%%%%%%%%%%%%%%%%%%%%%%%%%%%%%%%%%%
%%%%%%%%%%%%%%%%%%%%%%%%%%%%%%%%%%%%%%%%%%%%%%%%%%
\subsection{Derivatives of polynomials tend to free fractional powers}
\label{sec:freefractionalpowers}

In this section we consider a sequence of real-rooted polynomials $(p_d)_{d=1}^\infty$ whose empirical root distributions converge weakly to a compactly supported probability measure $\mu$.  After fixing $t\in (0, 1)$ we will be interested in the asymptotic behavior, as $d\to \infty$, of the root distributions of  $D^{\lfloor (1-t)d \rfloor }p_d(x)$, where $D$ denotes differentiation with respect to $x$. 

Using the PDE characterization of free fractional convolution powers obtained by Shlyakhtenko and Tao \cite{shlyakhtenko2020fractional},  in \cite{steinerberger2019nonlocal} and  \cite{steinerberger2020} Steinerberger informally showed that, under the above setup, the empirical root distributions (after proper normalization) of the polynomials $D^{\lfloor (1-t)d \rfloor } p_d(x) $ converge to $\mu^{\boxplus 1/t}$ as $d\to \infty$. This was later formally proven  by Hoskins and Kabluchko \cite{hoskins2020dynamics} by directly calculating the asymptotics of the $R$-transform of the derivatives of $p_d$.

Our aim is to give an alternative proof which explains this phenomenon from the viewpoint of finite free probability.  In simple terms, we will see that a simple modification of the operator $D^{\lfloor (1-t)d \rfloor}$  may be realized by a finite free multiplicative convolution and then use Theorem \ref{thm:weakconvergence} to obtain the limiting distribution.

\begin{lemma}[Differentiation via multiplication]
\label{lem:diffasmult}
Let $p$ be a monic polynomial of degree $d$. For $q(x)= x^{j} (x-1)^{d-j}$ and $j=0, \dots,d$, we have
$$p(x)\boxtimes_d q(x) = \frac{1}{(d)_j} x^{j} D^{j} p(x).   $$
\end{lemma}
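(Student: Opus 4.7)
The plan is to verify the identity by directly computing the coefficients of both sides and comparing them, using the coefficient description \eqref{eq:coeffmultiplication} of the finite free multiplicative convolution.

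First I would compute the coefficients $a_k^q$ of $q(x) = x^j(x-1)^{d-j}$. Since the roots of $q$ are $0$ with multiplicity $j$ and $1$ with multiplicity $d-j$, the elementary symmetric polynomial of degree $k$ in these roots only picks up contributions from subsets of the ones, giving
\[
a_k^q = \binom{d-j}{k} \quad \text{for } 0 \leq k \leq d-j, \qquad a_k^q = 0 \quad \text{for } k > d-j.
\]
Substituting into the formula $a_k^{p\boxtimes_d q} = \frac{a_k^p a_k^q\, k!}{(d)_k}$, a short manipulation using $\binom{d-j}{k} k! = (d-j)_k$ yields
\[
a_k^{p \boxtimes_d q} = \frac{(d-j)_k}{(d)_k}\, a_k^p \quad \text{for } 0 \leq k \leq d-j,
\]
and $0$ otherwise.

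Next I would expand the right-hand side. Writing $p(x) = \sum_{i=0}^d x^{d-i}(-1)^i a_i^p$, differentiating $j$ times and multiplying by $x^j$ gives
\[
x^j D^j p(x) = \sum_{i=0}^{d-j} (d-i)_j\, x^{d-i}(-1)^i a_i^p,
\]
so that the $k$-th (signed) coefficient of $\frac{1}{(d)_j} x^j D^j p(x)$ is $\frac{(d-k)_j}{(d)_j} a_k^p$ for $k \leq d-j$, and $0$ for larger $k$.

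Finally, I would check the equality of the two expressions via the elementary identity
\[
\frac{(d-j)_k}{(d)_k} = \frac{(d-k)!\,(d-j)!}{d!\,(d-j-k)!} = \frac{(d-k)_j}{(d)_j},
\]
which shows the coefficients agree for every $k$, concluding the proof. Since both sides are monic of degree $d$ (the factor $\frac{1}{(d)_j}$ normalizes the leading coefficient of $x^j D^j p$), this coefficient-wise equality gives the equality of polynomials. There is no real obstacle here: the only thing to be careful about is the index bookkeeping, in particular that the vanishing of $a_k^q$ for $k > d-j$ exactly matches the vanishing of the coefficients of $x^j D^j p(x)$ at those degrees.
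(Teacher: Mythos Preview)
Your proof is correct. In fact, the paper explicitly remarks (just after its own proof) that this direct coefficient computation works, so there is no gap.

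The paper itself, however, takes a different route: it uses the operator characterization of $\boxtimes_d$, namely that if $q(x)=Q(xD)(x-1)^d$ for some polynomial $Q$, then $p\boxtimes_d q = Q(xD)p$. It then shows by induction that $x^iD^i$ is a polynomial $T_i$ in $xD$, and observes that $Q:=\tfrac{1}{(d)_j}T_j$ satisfies $Q(xD)(x-1)^d=x^j(x-1)^{d-j}$. Your approach is more elementary and requires no extra machinery beyond the definition \eqref{eq:coeffmultiplication}; the paper's approach is less computational and, as the authors themselves note, better explains \emph{why} repeated differentiation can be realized as a multiplicative convolution.
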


\begin{proof}
First recall from the discussion in Section \ref{sec:preliminaries} that if $Q$ is a polynomial satisfying $q(x) = Q(xD)(x-1)^d$, then 
\begin{equation}
\label{eq:eqeqeq}
p(x)\boxtimes_d q(x) = Q(xD) p(x).
\end{equation}
We begin by showing that, as operators, $x^{i} D^{i}$ is a polynomial in $xD$ for every  $i$. Proceeding by induction assume that $  x^iD ^i =T_i(xD)$ for some polynomial $T_i$ and note that $(xD)   (x^i D ^i)  = ix^i D ^i+ x^{i+1} D^{i+1} $ or equivalently $x^{i+1} D^{i+1}  =  (x D) T_i(xD) - i T_i\left(xD\right),$ proving the existence of $T_{i+1}$.

Now, if we take $Q:=\frac{1}{(d)_j} T_{j}$ it follows that $Q(xD) = \frac{1}{(d)_j} x^{j} D^{j}$. Finally, since clearly $Q(xD) (x-1)^d = \frac{1}{(d)_j} x^{j} D^{j} p(x)= q(x)$, the proof is then concluded by (\ref{eq:eqeqeq}). 
\end{proof}

\begin{remark}
The above lemma can also be proven by first  computing explicitely the coefficients of $x^j(x-1)^{d-j}$ and then using the definition of the $a^{p\boxtimes_d q}_n$ given in (\ref{eq:coeffmultiplication}). However, we believe that the  proof presented above explains at a conceptual level why the operation of taking repeated derivatives can be understood as a multiplicative convolution.    
\end{remark}

The remaining ingredient of our proof is the (by now well known) connection between fractional free convolution powers and the multiplicative free convolution (see \cite[Exercise 14.21]{nica2006lectures}). Namely, if $\mu$ is a compactly supported measure on $\mathbb{R}$, then 
\begin{equation}
\label{eq:freemultandfrac}
(1-t)\delta_0 + t \mu^{\boxplus 1/t} =\Lambda_{1/t}(\mu)\boxtimes \left( (1-t)\delta_0+t \delta_1  \right),
\end{equation}
for any $t\in (0, 1)$, where $\Lambda_{1/t}(\mu)$ denotes the dilation by a factor of $1/t$ of the measure $\mu$.

\iffalse
\begin{proof}
First note that
\[
q(x)=x^{d-k}(x-t)^{k}=x^{d-k}\sum_{j=0}^k \binom{k}{j}  (-t)^j x^{k-j}=\sum_{i=0}^d x^{d-i} (-1)^i b_i,
\]
where $b_i=0$ for $i=k+1,k+2,\dots, d$ and $b_i= \binom{k}{i} t^i$ for $i=0,\dots, k$

Then by the formula for the finite free multiplicative convolution in terms of coefficients \cite{marcus2016polynomial}, we know that

\[
p(x) \boxtimes_d q(x) = \sum_{i=0}^k x^{d-i} (-1)^i a_i \frac{\binom{k}{i} t^i}{\binom{d}{i}}  = \frac{t^d k!(x/t)^{d-k}}{d!} \sum_{i=0}^k (x/t)^{k-i} (-1)^i a_i \frac{(d-i)!}{(k-i)!}
\]
\[
= \frac{t^d k!}{d!}(x/t)^{d-k} p^{(d-k)}(x/t)=\frac{k!}{d!} D_t\Big( x^{d-k
} p^{(d-k)}(x)\Big. 
\]
\end{proof}

In what follows, for a polynomial $p$ of degree $d$, with roots $\lambda_1,\ldots, \lambda_d$, let $$\mu_p=\sum_{i=1}^d \delta_{\lambda_i}$$
denote the empirical distribution of $p$.
\fi

\begin{theorem}[Hoskins-Kabluchko \cite{ hoskins2020dynamics}, Steinerberger \cite{steinerberger2020}]
Fix $t \in (0, 1)$ and let $(p_d)_{d=1}^\infty$ be a sequence of polynomials with $p_d\in \monicpolreal$. Assume that the empirical root distributions of $(p_d)_{d=1}^\infty$ converge weakly to a compactly supported probability measure $\mu$. Then, if  we set $$r_d(x) := D^{\lfloor (1-t )d\rfloor } p_d(t x),$$ the empirical root distributions of $(r_d)_{d=1}^\infty$ will converge weakly to $\mu^{\boxplus 1/t}$. 
\end{theorem}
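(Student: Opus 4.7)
The plan is to realize iterated differentiation as a finite free multiplicative convolution with a two-point polynomial (via Lemma \ref{lem:diffasmult}), apply the weak convergence result Theorem \ref{thm:weakconvergence}, and then convert the resulting free multiplicative convolution into a free fractional convolution power via the identity (\ref{eq:freemultandfrac}).

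Concretely, set $j_d := \lfloor (1-t)d \rfloor$ and let $q_d(x) := x^{j_d}(x-1)^{d-j_d} \in \monicpolplus$. By Lemma \ref{lem:diffasmult},
$$p_d(x) \boxtimes_d q_d(x) = \frac{1}{(d)_{j_d}}\, x^{j_d}\, D^{j_d} p_d(x).$$
The empirical root distribution of $q_d$ is $\frac{j_d}{d}\delta_0 + \frac{d - j_d}{d}\delta_1$ and converges weakly to $\nu_t := (1-t)\delta_0 + t\delta_1$. Since $q_d$ has only non-negative roots and $\mu_{p_d} \to \mu$ by hypothesis, Theorem \ref{thm:weakconvergence} gives $\mu_{p_d \boxtimes_d q_d} \to \mu \boxtimes \nu_t$ weakly. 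Applying (\ref{eq:freemultandfrac}) with $\mu$ in the role of $\Lambda_{1/t}(\Lambda_t \mu)$ yields
$$\mu \boxtimes \nu_t = (1-t)\delta_0 + t\, (\Lambda_t \mu)^{\boxplus 1/t}.$$

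On the other hand, $p_d \boxtimes_d q_d$ has exactly $j_d$ roots at $0$ (from the factor $x^{j_d}$) and its remaining $d - j_d$ roots are the roots of $D^{j_d} p_d$. Writing its empirical distribution as $\frac{j_d}{d}\delta_0 + \frac{d-j_d}{d}\mu_{D^{j_d} p_d}$ and matching atoms with the displayed limit, the fact that $\frac{j_d}{d}\to 1-t$ allows one to peel off the Dirac mass at $0$ and conclude, after renormalization, that
$$\mu_{D^{j_d} p_d} \to (\Lambda_t \mu)^{\boxplus 1/t}$$
weakly. To finish, note that $r_d(x) = D^{j_d} p_d(tx)$ has roots $\lambda/t$ where $\lambda$ ranges over the roots of $D^{j_d} p_d$, so $\mu_{r_d} = \Lambda_{1/t}\bigl(\mu_{D^{j_d} p_d}\bigr)$. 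Invoking the equivariance $(\Lambda_c \eta)^{\boxplus s} = \Lambda_c(\eta^{\boxplus s})$ of free fractional convolution powers under dilation (which follows immediately from the relation $R_{\Lambda_c \eta}(z)=cR_\eta(cz)$), we obtain
$$\mu_{r_d} \to \Lambda_{1/t}\bigl((\Lambda_t \mu)^{\boxplus 1/t}\bigr) = \Lambda_{1/t}\Lambda_t\bigl(\mu^{\boxplus 1/t}\bigr) = \mu^{\boxplus 1/t},$$
as desired.

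The main subtle step is the passage from the weak convergence of $\mu_{p_d \boxtimes_d q_d}$ to that of $\mu_{D^{j_d} p_d}$, which relies crucially on the fact that the multiplicity of $0$ as a root of $p_d \boxtimes_d q_d$ is exactly $j_d$, so the $\delta_0$ contribution can be subtracted exactly before renormalizing. Everything else is a routine bookkeeping: choosing the correct auxiliary polynomial $q_d$ so that Lemma \ref{lem:diffasmult} produces $D^{j_d} p_d$, and tracking the dilation factor $t$ in the final step.
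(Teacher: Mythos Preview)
Your proof is correct and follows essentially the same strategy as the paper: realize $D^{j_d}$ as $\boxtimes_d$ with $q_d(x)=x^{j_d}(x-1)^{d-j_d}$ via Lemma~\ref{lem:diffasmult}, invoke Theorem~\ref{thm:weakconvergence}, apply (\ref{eq:freemultandfrac}), and peel off the $\delta_0$ mass. The only difference is bookkeeping: the paper convolves $p_d(tx)$ with $q_d$, so the limiting input is already $\Lambda_{1/t}(\mu)$ and (\ref{eq:freemultandfrac}) yields $(1-t)\delta_0+t\,\mu^{\boxplus 1/t}$ directly; you instead convolve $p_d(x)$ with $q_d$, obtain $(\Lambda_t\mu)^{\boxplus 1/t}$ after peeling, and then undo the dilation at the end using the (elementary) equivariance $(\Lambda_c\eta)^{\boxplus s}=\Lambda_c(\eta^{\boxplus s})$. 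Both orderings are fine.

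One small inaccuracy worth fixing: the multiplicity of $0$ in $p_d\boxtimes_d q_d$ need not be \emph{exactly} $j_d$ (it could be larger if $D^{j_d}p_d$ itself vanishes at $0$). What you actually need, and what holds unconditionally, is the decomposition $\mu_{p_d\boxtimes_d q_d}=\tfrac{j_d}{d}\delta_0+\tfrac{d-j_d}{d}\mu_{D^{j_d}p_d}$; from this and $\tfrac{j_d}{d}\to 1-t$ the peeling argument goes through as you wrote.
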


\begin{proof}
For fixed $d$ define $k:= \lfloor (1-t)d \rfloor$ and $q_d(x):= x^k (x-1)^{d-k}$. Then consider the auxiliary polynomials $\tilde{r}_d(x)$ defined by
$$\tilde{r}_d(x) := p_d(t x) \boxtimes_d q_d(x) = \frac{1}{(d)_k} x^k D^k p_d(t x),$$
where the last equality follows from Lemma \ref{lem:diffasmult}. Now, since $\Lambda_{1/t}(\mu)$ and $(1-t)\delta_0+t \delta_1$ are the asymptotic root distributions of the sequences $(p_d(tx))_{d=1}^\infty$ and $(q_d(x))_{d=1}^\infty$ respectively, by Theorem \ref{thm:weakconvergence} we have that the asymptotic root distribution of $\tilde{r}_d(x)$ is given by
$$\Lambda_{1/t}(\mu) \boxtimes \left( (1-t)\delta_0+t \delta_1  \right),$$
which by (\ref{eq:freemultandfrac}) is precisely the measure $(1-t)\delta_0+t\mu^{\boxplus 1/t}$. The proof is then concluded by noting that $r_d(x)$ has same root distribution as $\tilde{r}_d(x)/x^k$ and that removing the $k$ roots at zero has the effect of removing the atom at 0 from the limiting distribution.  
\end{proof}
%\iffalse

\begin{remark}
Since in the limit, i.e. in free probability, we have the relation $(1-t)\delta_0+t\mu^{\boxplus 1/t}=\Lambda_{1/t}(\mu) \boxtimes \left( (1-t)\delta_0+t \delta_1  \right),$ 
it is natural to compare the analogues of both sides of the equality at the finite level. For simplicity, let us assume that $td$ is an integer. The fractional convolution powers are naturally defined\footnote{The fractional convolution may not have real roots, see \cite[Remark 6.5]{arizmendi2018cumulants}} by their cumulants: $\kappa_n^d(p_d^{\boxplus \lambda})=\lambda \kappa_n^d(p_d)$. The moments for the free powers (normalized by t) are given by
\begin{equation*}
tm_n(p_d^{\boxplus 1/t}) =\frac{(-1)^{n-1}}{d^{n+1} (n-1)!} \sum_{\substack{ \sigma, \tau\in P(n) \\  \sigma\lor \tau=1_n }} d^{\blocks{\sigma}+\blocks{\tau}}\mu(0_n,\sigma)\mu(0_n,\tau) \kappa_\sigma^d(p ) t^{-\blocks{\sigma}+1},
\end{equation*}
while the moments of the free multiplicative convolution of $\tilde p_d(x):=p_d(xt)$ with $q_d(x)$ are given by
\begin{equation*}
m_n(\tilde p_d \boxtimes_d q_d )=\frac{(-1)^{n-1}}{d^{n+1} (n-1)!} \sum_{\substack{ \sigma, \tau\in P(n) \\  \sigma\lor \tau=1_n }} d^{\blocks{\sigma}+\blocks{\tau}}\mu(0_n,\sigma)\mu(0_n,\tau) \kappa_\sigma^d(p ) t^{\blocks{\tau}-n},
\end{equation*}
which in the limit coincides since the leading order is given when $\blocks{\sigma}+\blocks{\tau}=n+1$, i.e. $-\blocks{\sigma}+1=\blocks{\tau}-n$.
\end{remark}

%\fi

\iffalse
Then, if we consider a sequence of polynomials $(p_d)_{d\geq 1}$ whose distribution tends to a probability measure $\mu$, and we define the corresponding sequence $(q_d)_{d\geq 1}$ that tends by construction to $(1-1/t)\delta_0+1/t \delta_t$, then by Theorem ?? we know that 
\[
\mu_{p_d\boxtimes_d q_d}\to \mu  \boxtimes \Big( (1-1/t)\delta_0+1/t \delta_t\Big) 
\]
On the other hand, it is known (see Exercise 14.21 of \cite{nica2006lectures} )
\[
 \mu  \boxtimes \Big( (1-1/t)\delta_0+1/t \delta_t\Big) = (1-1/t)\delta_0+1/t \mu^{\boxplus t}.
\]

Now in terms of measures, Lemma ?? says that
$$\mu_{p\boxtimes _d q}=(1-\frac{1}{t})\delta_0+\frac{1}{t}\mu_{D_t(k!/d! p^{(d-k)})}.$$

Putting all this together we see that 

$$(1-\frac{1}{t})\delta_0+\frac{1}{t}\mu_{D_t(k!/d! p^{(d-k)})}\to(1-\frac{1}{t})\delta_0+\frac{1}{t} \mu^{\boxplus t}.$$

So, excluding the $(1-\frac{1}{t})\delta_0$  we conclude that 
\[
\mu_{D_t(k!/d! p^{(d-k)})}\to \mu^{\boxplus t}
\]
\fi

%%%%%%%%%%%%%%%%%%%%%%%%%%%%%%%%%%%%%%%%%%%%%%%%%%
%%%%%%%%%%%%%%%%%%%%%%%%%%%%%%%%%%%%%%%%%%%%%%%%%%
\section{Genus expansion}
\label{sec:genusformula}
In this section we will give a topological interpretation to the formula  presented in Theorem \ref{thm.cumulant.of.products}. To emphasize that our analysis does not use any particular property pertaining to cumulants or moments, rather than working with pairs of sequences of the form $(\kappa_j^d(p))_{j=1}^d$ and $(\kappa_j^d(q))_{j=1}^d$, or $(\kappa_j^d(p))_{j=1}^d$ and $(m_j(q))_{j=1}^\infty$, we will be working with arbitrary sequences of numbers $(u_j)_{j=1}^\infty$ and $(v_j)_{j=1}^\infty$. 

Recall that the formula in Theorem \ref{thm.cumulant.of.products} has the following form
\begin{equation}
    \label{eq:mainexpression}
\frac{(-1)^{n-1}}{d^{n+1} (n-1)!}    \sum_{\substack{ \tau,\sigma\in P(n) \\  \tau\lor\sigma=1_n }} d^{\blocks{\sigma}+\blocks{\tau}}\mu(0,\sigma)\mu(0,\tau) u_\sigma v_\tau.
\end{equation}
Our starting point is the observation that this sum can be  naturally replaced by a sum over pairs of permutations that generate subgroups with transitive actions. Let us make this precise. 

Recall that $S_n$ denotes the symmetric group on $n$ elements and that $f:S_n \to \partlat(n)$ is the function that transforms permutations into partitions by reading cycles as blocks. \iffalse that is, given $\alpha \in S_n$, $\pi= f(\alpha)$ is the partition of $[n]$  that satisfies $i\sim _\pi j$ if and only if $i$ and $j$ are in the same cycle of $\alpha$\fi From  (\ref{eq:formulaformu}) we note that for any $\pi\in \partlat(n)$,
$$
|\mu(0_n, \pi)|=  \prod_{V\in \pi} (|V|-1)!,
$$
is precisely the number of $\alpha\in S_n$ for which $f(\alpha)=\pi$. Hence, the expression in (\ref{eq:mainexpression}) can be rewritten as 
\begin{equation}
\label{eq:sumoverpermutations}
\frac{(-1)^{n-1}}{d^{n+1}(n-1)!}    \sum_{\substack{ \alpha,\beta\in S_n \\  f(\alpha)\lor f(\beta)=1_n }} (-d)^{\cyc{\alpha}+\cyc{\beta}} u_\alpha v_\beta.
\end{equation}
Moreover, note that the condition  $f(\alpha)\lor f(\beta)=1_n$ is equivalent to the condition that the subgroup generated by $\alpha$ and $\beta$ has a transitive action on $[n]$. With these observations in hand, one can expect to use the general theory of maps and surfaces developed by Jacques \cite{jacques1968genre} and Cori \cite{cori1975code}, to  group the terms in (\ref{eq:sumoverpermutations}) using the notion of genus. We do this  in Theorem \ref{thm:generalformula} below after discussing some preliminaries. 

\subsection{Maps associated to pairs of permutations}
\label{sec:mapsandpermutations}

Here we will review the results and concepts from the theory of maps and surfaces that  will be needed. We refer the reader to \cite[Section 5.1]{mingo2017free} for a more detailed exposition. 

Given two permutations $\alpha, \gamma\in S_n$ we can construct a directed graph, denoted by $\G(\alpha| \gamma)$, whose vertex set is $[n]$ and where there is a directed edge between $i$ and $j$ if $\alpha(i) = j$ or $\gamma(i) =j$, see Figure \ref{fig:orientedgraph}.   Note that, by construction, each vertex in $\G(\alpha|\gamma)$ has two incoming  and two outgoing edges. Moreover, if we ignore the orientation of the edges,  $\G(\alpha|\gamma)$ is connected if an only if the subgroup generated by $\alpha$ and $\gamma$ acts transitively on $[n]$.

 \begin{figure}[h]
  \centering
\includegraphics[scale=.4]{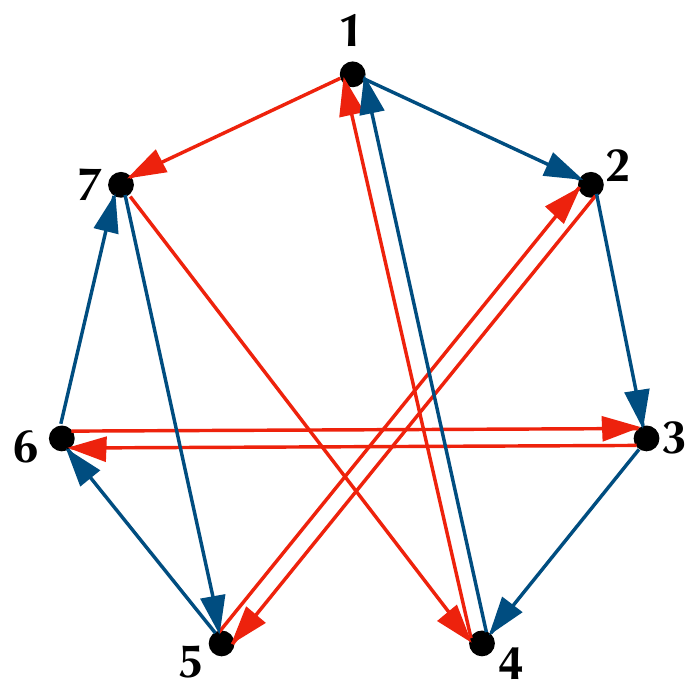}
\caption{  Here we show the oriented graph corresponding to the pair of permutations $\alpha=(1,7,4)(2,5)(3,6)$ and $\gamma=(1,2,3,4)(5,6,7)$. The red and blue edges correspond to the cycles in $\alpha$ and $\gamma$ respectively.    }
 \label{fig:orientedgraph}
\end{figure}

We are interested in embeddings of $\G(\alpha|\gamma)$, into  closed oriented surfaces of genus $g$, with the following properties:
\begin{enumerate}[(i)]
    \item The embedding has no crossings, i.e. the interiors of the edges of $\G(\alpha| \gamma)$ do not intersect.
    \item For every cycle of $\gamma$, the corresponding  cycle of $\G(\alpha| \gamma)$ divides the surface into two regions, such that when transversing the cycle in the direction determined by its directed edges, the region on the \emph{left} is homeomorphic to an open disc and does not contain any vertices or edges of $\G(\alpha| \gamma)$. \item Similarly, for every cycle of $\alpha$, the region on the \emph{right} of the corresponding cycle in $\G(\alpha| \gamma)$ is homeomorphic to an open disc and does not contain any vertices or edges of $\G(\alpha| \gamma)$.
\end{enumerate}
See Figure \ref{fig:annulartorus} for an example. It is known that such  embeddings exist for large enough $g$. The \emph{genus of $\alpha$ relative to $\gamma$} is then defined to be the smallest $g$ for which there exists an embedding with the above properties. 

 \begin{figure}[h]
  \centering
\includegraphics[scale=.4]{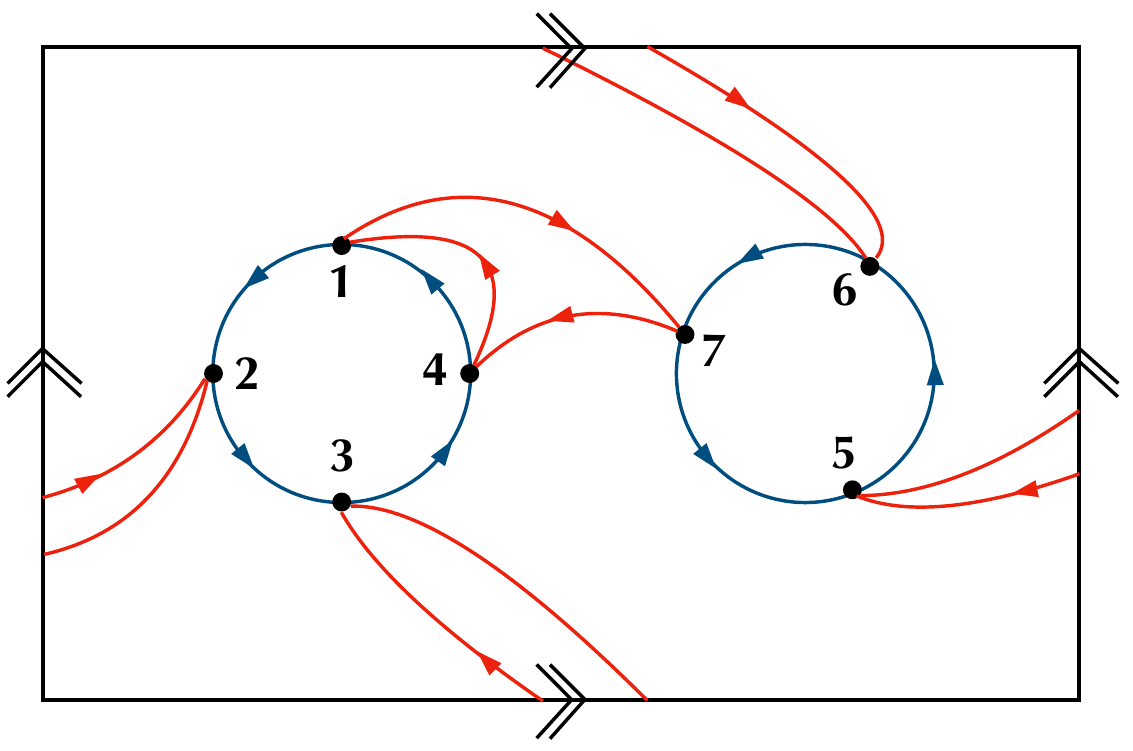}
\caption{A valid embedding into the 2-torus of the directed graph from Figure \ref{fig:orientedgraph}. From the picture it is clear that there is no valid  embedding of this graph into the 2-sphere. }
 \label{fig:annulartorus}
\end{figure}

With this setup, an application of  Euler's formula shows the following classical result that relates the relative genus to the cycle count of the permutations (see Theorem 9 in Section 5.1 of \cite{mingo2017free} for a helpful proof sketch). 

\begin{theorem}
\label{them:JaquesCori}
Let $\alpha, \gamma \in S_n$ and let $g$ be the  genus of $\alpha$ relative to $\gamma$. If the subgroup generated by $\alpha$ and $\gamma$ acts transitively on $[n]$
$$\cyc{\alpha}+\cyc{\alpha^{-1}\gamma} + \cyc{\gamma} = n+2(1-g).$$
\end{theorem}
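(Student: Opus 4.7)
The plan is to apply Euler's formula to a cellular embedding of $\G(\alpha|\gamma)$ on the minimal-genus surface satisfying conditions (i)--(iii). Note first that the transitivity hypothesis guarantees that $\G(\alpha|\gamma)$ is connected, so the embedding sits inside a single connected closed oriented surface of genus $g$. Counting vertices and edges is immediate: $V=n$, and since each vertex of $[n]$ has exactly one outgoing $\alpha$-edge and one outgoing $\gamma$-edge, $E=2n$.

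The heart of the argument is counting the faces $F$. Two families of faces are already provided by the hypotheses: conditions (ii) and (iii) give one disc-face per cycle of $\gamma$ (those on the left of the $\gamma$-cycles) and one disc-face per cycle of $\alpha$ (those on the right of the $\alpha$-cycles), for a total of $\cyc{\gamma}+\cyc{\alpha}$ disc-faces. I would then show that all remaining faces are in bijection with the cycles of $\alpha^{-1}\gamma$. To do this, observe that conditions (ii) and (iii) force a definite cyclic order of the four edge-ends at each vertex $v$ (going around $v$ in the direction of the orientation): incoming-$\gamma$, outgoing-$\gamma$, incoming-$\alpha$, outgoing-$\alpha$, with the $\gamma$-left disc and $\alpha$-right disc occupying the first and third angular sectors. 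Thus the third-type faces occupy the remaining two angular sectors at each vertex. Tracing the boundary of such a face through $v$ one follows the outgoing $\gamma$-edge to $\gamma(v)$, then immediately turns onto the incoming-$\alpha$ edge at $\gamma(v)$, i.e. one traverses the $\alpha$-edge backwards to $\alpha^{-1}(\gamma(v))=(\alpha^{-1}\gamma)(v)$. Iterating, each third-type face corresponds to exactly one cycle of $\alpha^{-1}\gamma$, and conversely. Hence there are $\cyc{\alpha^{-1}\gamma}$ such faces.

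Granting that every face (including the third-type ones) is homeomorphic to an open disc, the embedding is cellular and Euler's formula applies:
\[
V-E+F=2-2g,
\]
which after substituting $V=n$, $E=2n$ and $F=\cyc{\alpha}+\cyc{\gamma}+\cyc{\alpha^{-1}\gamma}$ immediately rearranges to the desired identity $\cyc{\alpha}+\cyc{\alpha^{-1}\gamma}+\cyc{\gamma}=n+2(1-g)$.

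The main obstacle is the tacit step of justifying that the third-type faces are actually open discs, which is what makes the embedding cellular and Euler's formula legitimate. This is handled by a standard minimality argument: if some third-type face were not simply connected, one could compress along a non-contractible loop inside it to produce an embedding satisfying (i)--(iii) on a surface of strictly smaller genus, contradicting the definition of $g$ as the minimal genus. Once this is in place the rest of the proof is just bookkeeping with Euler's formula.
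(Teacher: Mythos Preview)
Your proposal is correct and follows exactly the route the paper indicates: the paper does not give its own proof of this classical result but simply says ``an application of Euler's formula shows the following'' and refers to \cite{mingo2017free} for a sketch. Your argument---counting $V=n$, $E=2n$, identifying the faces as $\cyc{\gamma}+\cyc{\alpha}$ prescribed disc-faces plus $\cyc{\alpha^{-1}\gamma}$ mixed faces via the forced cyclic edge order at each vertex, and invoking minimality of $g$ to ensure cellularity---is precisely that application, carried out in full.
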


There are two instances of the above result that are of particular relevance for our discussion. 
\bigskip

\noindent \textbf{Non-crossing partitions.} When $\gamma$ is the $n$-cycle $$\gamma_n := (1, 2, \dots, n),$$ from the properties (i-iii) mentioned above, it is clear that the $\alpha \in S_n$ that have genus 0 with respect to $\gamma_n$ are in one-to-one correspondence with the set of non-crossing partitions of $[n]$.  And, from Theorem \ref{them:JaquesCori},  the set formed by these permutations can be described by
$$S_{NC}(n) := \{\alpha \in S_n : \cyc{\alpha}+\cyc{\alpha^{-1}\gamma_n}=n+1 \}.$$
Moreover, from the graphical representation of $\G(\alpha| \gamma_n)$, it is clear that if $\alpha \in S_{NC}(n)$ then $f(\alpha^{-1}\gamma_n)$ is precisely the Kreweras complement of $f(\alpha)$.

We refer the reader to \cite[Lecture 23]{nica2006lectures} for an extended discussion about the relationship between non-crossing partitions and permutations. 
%this take any $\alpha\in S_n$ of genus 0 relative to $\gamma_n$. Since $g=0$ we know that $\G_{\alpha, \gamma_n}$ can be embedded in the plane without crossings and satisfying the conditions (i-iii) mentioned above. Moreover, if we orient the drawing of the cycle corresponding to $\gamma_n$ in clockwise direction, the drawing of $\alpha$ has to be contained in the bounded region, and it becomes clear that $f(\alpha)$ is non-crossing. 

\bigskip

\noindent \textbf{Annular non-crossing permutations.} Now take two positive integers $\p, \q$ with $\p+\q=n$ and let 
$$\gamma_{\p, \q} :=(1, \dots, \p)(\p+1, \dots, n).$$ 
It is easy to see that the  $\alpha\in S_n$ of genus 0 with respect to $\gamma_{\p, \q}$, that satisfy that the subgroup generated by $\alpha$ and $\gamma_{r, s}$ acts transitively on $[n]$, are precisely the set of non-crossing annular pairings defined above in Section \ref{sec:annularpermutations}. Given $\alpha \in S_{NC}(\p, \q)$ we define its Kreweras complement by $Kr_{\p, \q}(\alpha) := \alpha^{-1}\gamma_{\p, \q}$.
\bigskip

Below we will describe a common generalization of the notions of non-crossing partitions and annular non-crossing permutations.

\subsection{Types and generalized non-crossing permutations}
\label{sec:typesandgeneralizednc}
Here we introduce the notation and remaining concepts that will be used in the proof of the main result of this section. 

Given $\alpha\in S_n$, by the \emph{cycle type} of $\alpha$ we mean  the integer partition $\intpart \vdash n$ that lists the lengths of the cycles of $\alpha$. To be more precise, if $\alpha$ has $m$ cycles, $\intpart$ will be the integer partition $[\intpart_1,   \cdots,  \intpart_m]$, where $\intpart_1\geq \cdots\geq \intpart_m\geq 1$ and such that the $\intpart_i$ are the lengths of the cycles in the cycle decomposition of $\alpha$. For example, if $\alpha \in S_5$ has the cycle decomposition $(1)(2, 4, 5)(3)$, then the cycle type of $\alpha$ is $[3, 1, 1]$.    

Given $\intpart\vdash n$ we will denote the number of parts of $\intpart$ by $\ip{\intpart}$, and  $t_i^\intpart$  will denote the number of parts of $\intpart$ that are equal to $i$. 

If  $\intpart= [\intpart_1, \dots, \intpart_m]$, we will define $\gamma_\intpart$ to be the canonical permutation of type $\intpart$, namely
$$\gamma_\intpart := (1, \dots, \intpart_1) (\intpart_1+1, \dots,\intpart_1+ \intpart_2)\cdots (\intpart_1+\cdots +\intpart_{m-1}+1, \dots, n).$$
To give an example, if $\intpart=[3, 1, 1]$ then $\gamma_\intpart = (123)(4)(5)$. We can now generalize the definitions of $S_{NC}(n)$ and $S_{NC}(\p, \q)$. 

\begin{definition}[Generalized non-crossing permutations]
Given $\intpart \vdash n$ we will denote by $\snc{g}{\intpart}$ to be the set of permutations $\alpha\in S_n$ that have  genus $g$ relative to $\gamma_\intpart$ and  that satisfy that the subgroup generated by $\alpha$ and  $\gamma_\intpart$ acts transitively on $[n]$. 
\end{definition}

For example, if $\alpha = (1, 4, 7)(2, 5)(3, 6)$ then $\alpha\in \snc{1}{4, 3}$, see Figure \ref{fig:annulartorus} for a graphical representation of this permutation as an annular non-crossing permutation on the torus.  Note also that $\snc{0}{n}=S_{NC}(n)$ and  $\snc{0}{\p, \q}=S_{NC}(\p, \q)$. Finally, to simplify many of the expressions below it will be useful to denote
$$S^2_{n, k} := \{ (\alpha, \beta) \in S_n\times S_n : \cyc{\alpha}+\cyc{\beta} = n+1-k  \text{ and }  f(\alpha) \vee f(\beta) = 1_n \}.$$

\subsection{Decomposition by type and genus}
\label{sec:generalformula}

We are now ready to prove the following. 
\begin{theorem}
\label{thm:generalformula}
Let $(u_j)_{j=1}^\infty$ and $(v_j)_{j=1}^\infty$ be any two sequences of numbers. Then, for any $n$ and any $k=0, \dots, n-1$  we have the following decomposition by genus:
\begin{equation}
\label{eq:genusdecomposition}
\frac{(-1)^{n-1}}{(n-1)!} \sum_{\substack{ \sigma, \tau\in P(n) \\ \sigma\lor  \tau=1_n \\ \blocks{\sigma}+\blocks{\tau}=n+1-k}} \mu(0_n,\sigma)\mu(0_n,\tau) u_\sigma v_\tau = (-1)^k \sum_{g=0}^{\lfloor k/2\rfloor} s_k^{(g)},
\end{equation}
where 
\begin{equation}
\label{eq:skg}
s_k^{(g)} =    \sum_{\substack{\intpart \vdash n\\ \ip{\intpart} = k+1-2g}} \frac{n}{\prod_{i=1}^{\ip{\intpart}} \intpart_i \prod_{i=1}^n t_i^\intpart!} \sum_{\substack{ \alpha \in \snc{g}{\intpart}}} u_\alpha v_{\alpha^{-1}\gamma_\intpart}. 
\end{equation}
\end{theorem}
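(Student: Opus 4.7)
The plan is to convert both sides of \eqref{eq:genusdecomposition} into sums over pairs of permutations in $S_n$, and then use a change of variables together with Theorem \ref{them:JaquesCori} to sort the terms by genus.

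First, I would convert the left-hand side into a permutation sum using the fact, recalled just before \eqref{eq:sumoverpermutations}, that $|\mu(0_n,\pi)|=\prod_{V\in\pi}(|V|-1)!$ equals the number of $\alpha\in S_n$ with $f(\alpha)=\pi$. Since $\mu(0_n,\pi)=(-1)^{n-\blocks{\pi}}|\mu(0_n,\pi)|$ and since $u_\sigma=u_\alpha$, $v_\tau=v_\beta$ whenever $f(\alpha)=\sigma$, $f(\beta)=\tau$, each term $\mu(0_n,\sigma)\mu(0_n,\tau)u_\sigma v_\tau$ in the LHS expands into $(-1)^{2n-\blocks{\sigma}-\blocks{\tau}}\sum_{f(\alpha)=\sigma,\,f(\beta)=\tau}u_\alpha v_\beta$. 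On the constraint $\blocks{\sigma}+\blocks{\tau}=n+1-k$ this sign is $(-1)^{n+k-1}$, so combining with the prefactor $\frac{(-1)^{n-1}}{(n-1)!}$ the left-hand side becomes exactly $\frac{(-1)^k}{(n-1)!}\sum_{(\alpha,\beta)\in S^2_{n,k}}u_\alpha v_\beta$. Thus the theorem reduces to showing
\[
\frac{1}{(n-1)!}\sum_{(\alpha,\beta)\in S^2_{n,k}}u_\alpha v_\beta \;=\;\sum_{g=0}^{\lfloor k/2\rfloor}s_k^{(g)}.
\]

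Next, to stratify $S^2_{n,k}$ by genus, I would perform the substitution $(\alpha,\beta)\mapsto(\alpha,\gamma)$ with $\gamma:=\alpha\beta$, which is a bijection on $S_n\times S_n$ preserving transitivity of the generated subgroup. Applying Theorem \ref{them:JaquesCori} (with $\beta=\alpha^{-1}\gamma$) and using $\cyc{\alpha}+\cyc{\beta}=n+1-k$ yields $\cyc{\gamma}=k+1-2g$, where $g$ is the genus of $\alpha$ relative to $\gamma$. Hence the transitive pairs of fixed genus $g$ correspond bijectively to transitive pairs $(\alpha,\gamma)$ with $\cyc{\gamma}=k+1-2g$ and relative genus $g$, and the constraint forces $0\le g\le\lfloor k/2\rfloor$.

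Then I would partition the sum by the cycle type $\intpart\vdash n$ of $\gamma$. For fixed $\intpart$ with $|\intpart|=k+1-2g$, each $\gamma$ of type $\intpart$ is of the form $\gamma=\omega\gamma_\intpart\omega^{-1}$, and the map $\alpha\mapsto\omega^{-1}\alpha\omega$ is a bijection between the transitive genus-$g$ partners of $\gamma$ and $\snc{g}{\intpart}$. Since $u_\alpha$ depends only on the cycle type of $\alpha$ and $v_{\alpha^{-1}\gamma}$ only on the cycle type of $\alpha^{-1}\gamma=\omega(\alpha')^{-1}\gamma_\intpart\omega^{-1}$, both weights are conjugation-invariant, giving
\[
\sum_{\substack{\alpha\in S_n\\\text{transitive w/ }\gamma,\,\text{genus }g}}u_\alpha v_{\alpha^{-1}\gamma}\;=\;\sum_{\alpha\in\snc{g}{\intpart}}u_\alpha v_{\alpha^{-1}\gamma_\intpart}.
\]
Multiplying by the classical count $\frac{n!}{\prod_i\intpart_i\prod_i t_i^\intpart!}$ of permutations of type $\intpart$ and dividing by $(n-1)!$ reproduces precisely the coefficient $\frac{n}{\prod_i\intpart_i\prod_i t_i^\intpart!}$ appearing in \eqref{eq:skg}. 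Summing over $\intpart$ with $|\intpart|=k+1-2g$ and then over $g$ completes the proof.

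The only nontrivial step is the one invoking Theorem \ref{them:JaquesCori}; everything else is bookkeeping of signs and a standard orbit-counting argument under conjugation. The main thing to be careful about is the sign calculation in the first paragraph and to verify that the substitution $(\alpha,\beta)\mapsto(\alpha,\alpha\beta)$ preserves the transitivity condition defining $S^2_{n,k}$, which is immediate since $\langle\alpha,\beta\rangle=\langle\alpha,\alpha\beta\rangle$.
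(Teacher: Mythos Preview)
Your proposal is correct and follows essentially the same approach as the paper's proof: both convert the partition sum to a permutation sum via $|\mu(0_n,\pi)|=|f^{-1}(\pi)|$, perform the substitution $\gamma=\alpha\beta$, group the sum over $\gamma$ by cycle type using conjugation invariance, and invoke Theorem~\ref{them:JaquesCori} to relate $\cyc{\gamma}=|\intpart|$ to the genus via $|\intpart|=k+1-2g$. The only cosmetic difference is the order of presentation---the paper first fixes the type $\intpart$ and then observes that the genus $g_\intpart=(k+1-|\intpart|)/2$ is determined, whereas you first stratify by genus and then by type---but the content is identical.
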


\begin{proof}
 First, from the discussion at the beginning of this section,  we know that we can go from sums over partitions to sums over permutations as follows
$$\frac{(-1)^{n-1}}{(n-1)!} \sum_{\substack{ \sigma, \tau\in P(n) \\  \sigma\lor  \tau=1_n \\ \blocks{\sigma}+\blocks{\tau}=n+1-k}} \mu(0_n,\sigma)\mu(0_n,\tau) u_\sigma v_\tau = \frac{(-1)^k}{(n-1)!} \sum_{\substack{(\alpha, \beta)\in S_{n, k}^2}} u_\alpha v_\beta.$$
Now note that 
 \begin{align*}
 \sum_{(\alpha, \beta)\in S_{n, k}^2} u_\alpha v_\beta  = \sum_{\gamma\in S_n} \sum_{\substack{(\alpha, \beta) \in S_{n, k}^2\\ \alpha\beta=\gamma}} u_{\alpha} v_{\beta}  
  = \sum_{\gamma\in S_n } \sum_{\substack{\alpha \in S_n\\ (\alpha, \alpha^{-1}\gamma )\in S^2_{n, k}}} u_{\alpha} v_{\alpha^{-1} \gamma },  
 \end{align*}
 where the sums corresponding to some of the $\gamma\in S_n$ may be empty. Now we make a crucial observation, if $\gamma, \tilde{\gamma}\in S_n$ have the same type then
 \begin{equation}
 \label{eq:groupingbytype}
 \sum_{\substack{\alpha \in S_n\\ (\alpha, \alpha^{-1}\gamma )\in S^2_{n, k}}} u_{\alpha} v_{\gamma \alpha^{-1}} = \sum_{\substack{\tilde{\alpha} \in S_n\\ (\tilde{\alpha},  \tilde{\alpha}^{-1} \tilde{\gamma})\in S^2_{n, k}}} u_{\tilde{\alpha}} v_{\tilde{\alpha}^{-1}\tilde{\gamma}}.
 \end{equation}
 %Essentially, the above holds because the type assumption implies that there is an inner automorphism of $S_n$ sending $\gamma$ to $\tilde{\gamma}$, and inner automorphisms send transitive subgroups to transitive subgroups  and preserve the type of all permutations. 
 To justify the above equation note that because $\gamma$ and $\tilde{\gamma}$ have the same type we may take $\tau \in S_n$ with $\tau \gamma \tau^{-1} = \tilde{\gamma}$. Then, for any $\alpha \in S_n$ let $\tilde{\alpha}:= \tau \alpha \tau^{-1}$ and note that  the subgroup generated by $ \alpha$ and $\alpha^{-1} \gamma $ acts transitively on $[n]$  if and only if the subgroup generated by $ \tilde{\alpha}$ and $\tilde{\alpha}^{-1} \tilde{\gamma} $ does so too. Moreover $\cyc{\alpha}+\cyc{\alpha^{-1}\gamma}= \cyc{\tilde{\alpha}}+\cyc{\tilde{\alpha}^{-1}\tilde{\gamma} }$, that is, the condition of being in $S^2_{n, k}$ is invariant under conjugation by $\tau$. Since $u_\alpha$ and $v_{\alpha^{-1} \gamma }$ only depend on the type of $\alpha$ and $\alpha^{-1} \gamma $ we also have that $u_\alpha v_{\alpha^{-1}\gamma } = u_{\tilde{\alpha}}v_{\tilde{\alpha}^{-1}\tilde{\gamma} }$, and (\ref{eq:groupingbytype}) follows.  
 
 Now, if for every type $\intpart\vdash n$ we take $\gamma_\intpart$ as a representative of the class of permutations of type $\intpart$, from (\ref{eq:groupingbytype}) we deduce
 \begin{equation}
 \label{eq:takingrepresentative}
  \sum_{\gamma\in S_n } \sum_{\substack{\alpha \in S_n\\ (\alpha, \alpha^{-1}\gamma )\in S^2_{n, k}}} u_{\alpha} v_{\alpha^{-1} \gamma} = \sum_{\intpart\vdash n} N_\intpart  \sum_{\substack{\alpha \in S_n\\ (\alpha, \alpha^{-1} \gamma_\intpart )\in S^2_{n, k}}} u_{\alpha} v_{\alpha^{-1} \gamma_\intpart }  ,  
 \end{equation}
 where $N_\intpart := \frac{n!}{\prod_{i=1}^{|\intpart|}\intpart_i \prod_{i=1}^n t_i^\intpart!}$ is the number of permutations of type $\intpart$. 
 
 Finally, for fixed $\intpart\vdash n$, note that if $\alpha \in S_n$ satisfies that $(\alpha, \alpha^{-1} \gamma_\intpart )\in S_{n, k}^2$ and $g$ is the genus of $\alpha$ relative to $\gamma_\intpart$, then from Theorem \ref{them:JaquesCori} we get
 $$n+2-2g= \cyc{\alpha}+ \cyc{\alpha^{-1} \gamma_\intpart }+\cyc{\gamma_\intpart} = n+1-k+\ip{\intpart},$$
 and hence $g = g_\intpart$ is independent of $\alpha$ where $g_\intpart:= \frac{k+1-\ip{\intpart}}{2}\leq \frac{k}{2}$. Moreover, the subgroup generated by $\alpha$ and $\alpha^{-1} \gamma_\intpart  $ has a transitive action if and only the subgroup generated by $ \alpha$ and $\gamma_\intpart$ does so too. Therefore
  $$\sum_{\substack{\alpha \in S_n\\ (\alpha, \alpha^{-1} \gamma_\intpart )\in S^2_{n, k}}} u_{\alpha} v_{\gamma_\intpart \alpha^{-1}} =  \sum_{\alpha \in \snc{g_\intpart}{\intpart}} u_{\alpha} v_{\alpha^{-1} \gamma_\intpart }.$$
Combining this equation with (\ref{eq:takingrepresentative}) we obtain
  \begin{align*}
  \sum_{\gamma\in S_n } \sum_{\substack{\alpha \in S_n\\ (\alpha, \alpha^{-1} \gamma )\in S^2_{n, k}}} u_{\alpha} v_{\alpha^{-1}\gamma } & = \sum_{\intpart\vdash n} N_\intpart  \sum_{\alpha \in \snc{g_\intpart}{\intpart}} u_{\alpha} v_{\alpha^{-1} \gamma_\intpart }
  \\ & = \sum_{g=0}^{\lfloor k/2\rfloor} \sum_{\substack{\intpart\vdash n \\ \ip{\intpart}= k+1-2g}} N_\intpart  \sum_{\alpha \in \snc{g}{\intpart}} u_{\alpha} v_{\alpha^{-1} \gamma_\intpart}. 
  \end{align*}
  where the last equality is  obtained from grouping terms by the value of $g_\intpart$. The proof is then concluded by combining the above equation with the equalities obtained at the beginning of the proof.  
\end{proof}

We can now obtain Lemma \ref{Cor:NCtoP}  as a corollary of Theorem \ref{thm:generalformula}.

\begin{proof}[Proof of Lemma \ref{Cor:NCtoP}.]  
When $k=0$, $\lfloor k/2\rfloor =0$, so the right hand side of (\ref{eq:genusdecomposition}) has only one term, corresponding to genus 0. Moreover,  the expression in (\ref{eq:skg}) has the unique term $\intpart=[n]$, so the expression simplifies to
$$s_0^{(0)} = \sum_{\alpha\in \snc{0}{n}} u_\alpha v_{\alpha^{-1} \gamma_n }.$$
Now recall, from the discussion in Sections \ref{sec:mapsandpermutations} and  \ref{sec:typesandgeneralizednc}, that $\snc{0}{n}=S_{NC}(n)$ which $f$ bijects with $NC(n)$, and for every $\alpha\in S_{NC}(n)$, $f(\alpha^{-1} \gamma_n )$ is the Kreweras complement of $f(\alpha)$, and the proof is concluded. 
\end{proof}

Similarly, when $k=1$, Theorem \ref{thm:generalformula} reduces to the following lemma.

\begin{lemma}
\label{lem:secondorder}
Let  $(u_j)_{j=1}^\infty$, $(v_j)_{j=1}^\infty$ be two sequences of scalars, then for every $n$ 

\begin{equation} %\label{eq:prop1}
 -  \frac{n}{2} \sum_{\substack{\p+\q=n\\\alpha \in S_{NC}(\p, \q)}} \frac{u_\alpha v_{Kr_{\p, \q}(\alpha)}}{rs}=\frac{(-1)^{n-1}}{(n-1)!} \sum_{\substack{ \sigma, \tau\in P(n) \\  \sigma\lor \tau=1_n \\ \blocks{\sigma}+\blocks{\tau}=n}} \mu(0_n,\sigma)\mu(0_n,\tau) u_\sigma v_\tau.  
\end{equation}

\end{lemma}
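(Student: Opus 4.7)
The strategy is to derive this as a direct corollary of Theorem \ref{thm:generalformula} with $k=1$. Under this specialization, the upper limit $\lfloor k/2 \rfloor = 0$ in (\ref{eq:genusdecomposition}) forces only the genus-zero contribution to survive, so the right-hand side reduces to $-s_1^{(0)}$. The proof then boils down to unpacking the definition of $s_1^{(0)}$ given in (\ref{eq:skg}) and matching it with the annular non-crossing sum on the left-hand side.

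First I would enumerate the integer partitions $\intpart \vdash n$ with $\ip{\intpart} = 2$, which are precisely $\intpart = [\p, \q]$ for $\p \geq \q \geq 1$ with $\p + \q = n$. For each such $\intpart$, the discussion of annular non-crossing permutations in Section \ref{sec:mapsandpermutations} identifies $\snc{0}{\intpart}$ with $S_{NC}(\p, \q)$ and identifies $\alpha^{-1} \gamma_\intpart$ with $Kr_{\p, \q}(\alpha)$. The combinatorial coefficient $\frac{n}{\prod_i \intpart_i \prod_i t_i^\intpart!}$ evaluates to $\frac{n}{\p\q}$ when $\p \neq \q$ (since then $t_\p^\intpart = t_\q^\intpart = 1$) and to $\frac{n}{2\p^2}$ when $\p = \q$ (since then $t_\p^\intpart = 2$), so every summand in $s_1^{(0)}$ now has an explicit topological and combinatorial meaning.

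The last step is a purely cosmetic rearrangement that converts the sum over unordered partitions $[\p, \q]$ in $s_1^{(0)}$ into the sum over ordered pairs appearing on the left-hand side of the lemma: each off-diagonal unordered pair contributes twice in the ordered sum, producing the global factor $\tfrac{1}{2}$, while the diagonal case $\p = \q$ already carries this factor from $t_\p^\intpart! = 2$. After this rearrangement, pulling the common factor $\tfrac{n}{2}$ out front and applying the sign $(-1)^k = -1$ gives exactly the claimed identity.

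The main obstacle is purely bookkeeping: keeping track of the unordered-versus-ordered pair conventions (and in particular making sure the factor $\tfrac{1}{\p\q}$ implicit in the combinatorial coefficient is absorbed correctly), handling the diagonal case $\p = \q$ separately, and checking that the inner-versus-outer circle labelling in $S_{NC}(\p, \q)$ is consistent with the chosen representative $\gamma_\intpart = (1, \dots, \p)(\p+1, \dots, n)$. All substantive combinatorial and topological content has already been packaged into Theorem \ref{thm:generalformula}, so no genuinely new argument is required at this stage.
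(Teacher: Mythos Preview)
Your proposal is correct and follows essentially the same route as the paper: specialize Theorem \ref{thm:generalformula} to $k=1$, observe that $\lfloor k/2\rfloor=0$ forces the right-hand side to collapse to $-s_1^{(0)}$, enumerate the two-part integer partitions $\intpart=[\p,\q]$, identify $\snc{0}{\p,\q}=S_{NC}(\p,\q)$ and $\alpha^{-1}\gamma_{\p,\q}=Kr_{\p,\q}(\alpha)$, and then handle the ordered-versus-unordered bookkeeping (including the diagonal $\p=\q$ case). The paper's proof does exactly this, with the same treatment of the coefficient $\tfrac{n}{\p\q}$ versus $\tfrac{n}{2\p^2}$; your remark that the $\tfrac{1}{\p\q}$ factor must be tracked carefully is well placed, as this is indeed the only delicate point.
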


\begin{proof}
 Because we are in the case $k=1$, as in the proof of Lemma \ref{Cor:NCtoP} we have $\lfloor k/2\rfloor =0$, so the right hand side of (\ref{eq:genusdecomposition}) again has only one term, namely $-s_1^{(0)}$.  
However, now  the condition $\ip{\intpart}=k+1-2g$ in the expression for $s_1^{(0)}$ simplifies to $\ip{\intpart}=2$. That is, we only care about $\intpart$ of the form $[\p, \q]$ with $\p+\q=n$ and $\p\geq \q$. Hence
$$s_1^{(0)} = \sum_{\substack{\p> \q\geq 1\\ \p+\q=n }} \frac{n}{\p\q} \sum_{\alpha \in \snc{0}{\p, \q}} u_\alpha v_{\alpha^{-1} \gamma_{\p, \q} } + 1\{n\text{ is even}\}\cdot \frac{n}{2\p^2} \sum_{\alpha \in \snc{0}{\p, \p}} u_\alpha v_{\alpha^{-1} \gamma_{\p, \p} } $$
and again by the discussion in Section \ref{sec:typesandgeneralizednc} we know that $\snc{0}{\p, \q} =S_{NC}(\p, \q)$, and by definition $\alpha^{-1}\gamma_{\p, \q} = Kr_{\p,\q}(\alpha)$. 
\end{proof}

The proof of Theorem \ref{thm:3} stated in the introduction is now straightforward. 

\begin{proof}[Proof of Theorem \ref{thm:3}]
  Apply Lemma \ref{lem:secondorder} using the sequences $(u_j)_{j=1}^d = (\kappa_j^d)_{j=1}^d$ and $(v_j)_{j=1}^d = (1)_{j=1}^d$, and combine it with the second equation of Theorem \ref{thm.cumulant.of.products} in the case where $q(x)=(x-1)^d$. 
\end{proof}

%%%%%%%%%%%%%%%%%%%%%%%%%%%%%%%%%%%%%%%%%%%%%%%%%%
%%%%%%%%%%%%%%%%%%%%%%%%%%%%%%%%%%%%%%%%%%%%%%%%%%
\section{Infinitesimal distributions and examples}
\label{sec:infinitesimaldistributions}

In this section we study the infinitesimal limiting distribution of a sequence of polynomials. Recall that a sequence of monic polynomials $(p_d)_{d = 1}^\infty$, where $p_d\in \monicpolreal$, has an asymptotic distribution if 
\[
m_n:=\lim_{d\to \infty} m_n(p_d) <\infty, \qquad \text{for } n\geq 1  
\]
and $(m_n)_{n= 1}^{\infty}$ is the sequence of moments of some distribution $\mu$. Given that such limiting distribution exists, we say that $(p_d)_{d = 1}^\infty$ has an infinitesimal limiting distribution if the following limits also exist
\[
m_n':=\lim_{d\to \infty} d(m_n(p_d)-m_n) <\infty, \qquad \text{for } n\geq 1.
\]
\subsection{Approximation scheme}

The purpose of this section is to provide a general scheme of approximation that allows us to construct sequences of polynomials with infinitesimal limiting distribution. The idea is to fix a compactly supported distribution $\mu$  on the real line and construct monic polynomials $p_d$, of degree $d$, whose finite free cumulants are exactly the first $d$ free cumulants of $\mu$. 

\begin{definition}
Let $\mu$ be a distribution with finite moments of all orders, and denote by $(\freec_n(\mu))_{n= 1}^\infty$ the sequence of free cumulants of $\mu$. 

\begin{enumerate}
    \item 
For every $d\geq 1$ we denote by $p_d(\mu)$ the monic polynomial of degree $d$, uniquely defined by the constraints $\kappa_n^d(p_d(\mu))=\freec_n(\mu)$ for all $1\leq n \leq d$. 
\item Let $S=(d_i)_{i=1}^{\infty}$ be an infinite increasing sequence of natural numbers. A probability measure $\mu$ is said to be an \emph{$S$-real-rooted limit} if $p_{d_i}(\mu)$ is real-rooted for all $d_i\in S$. We denote by $\rrad(S)$ the set of all distributions $\mu$ that are $S$-real-rooted limits.
We use the notation $\rrad=\rrad(\mathbb{N})$.

%\item We say that $\mu$ is a \emph{real-rooted limit} if $p_d(\mu)$ is real-rooted for all $d\geq 1$. We denote by $\rrad$ the set of all distributions $\mu$ that are real-rooted for all degrees.
\end{enumerate}
\end{definition}

From the finite moment-cumulant formula it is clear that 
\[ m_n(\mu)=\lim_{i\to \infty} m_n(p_{d_i}(\mu)). \]
Thus, when $\mu$ is a compactly supported probability measure on the real line, the root distribution of the sequence of polynomials $(p_{d_i}(\mu))_{i=1}^{\infty}$ converges weakly to $\mu$. 
The condition of having real-roots for all degrees $d$ in some subset $S$ is a bit restrictive, however even the more restrictive family, $\rrad$, contains interesting distributions. Also, the families $\rrad(S)$  behave nicely with respect to the free additive convolution and weak convergence. This is summarized in the following proposition.

\begin{proposition}
\label{prop.real.rooted.all.degrees.distributions}
\begin{enumerate}
    \item Let $S_1$ and $S_2$ be sequences such that $S=S_1\cap S_2$ is infinite. If $\mu\in\rrad(S_1)$, and $\nu\in\rrad(S_2)$ then $\mu\boxplus\nu\in\rrad(S)$. In particular, for all $S$, $\rrad(S)$ is closed under the free additive convolution $\boxplus$.

    \item $\rrad(S)$ is closed under convergence in moments. That is, if $(\mu_k)_{k\geq 1} \subset \rrad(S)$, and there exist a probability measure $\mu$ such that $\lim_{k\to \infty} m_n(\mu_k)=m_n(\mu)$ for all $n\geq 1$, then $\mu\in\rrad(S)$.
\end{enumerate}
\end{proposition}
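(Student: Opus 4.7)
For part (1), the plan is to exploit that the finite free cumulants linearize $\boxplus_d$. If I set $r_d := p_d(\mu) \boxplus_d p_d(\nu)$ for $d \in S = S_1 \cap S_2$, then the linearization property gives
\[
\kappa_n^d(r_d) \;=\; \kappa_n^d(p_d(\mu)) + \kappa_n^d(p_d(\nu)) \;=\; \kappa_n(\mu) + \kappa_n(\nu) \;=\; \kappa_n(\mu \boxplus \nu)
\]
for every $1 \leq n \leq d$. By the uniqueness of $p_d$ as the monic degree-$d$ polynomial whose first $d$ finite free cumulants are prescribed, this forces $r_d = p_d(\mu \boxplus \nu)$. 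Since both $p_d(\mu)$ and $p_d(\nu)$ lie in $\monicpolreal$ for every $d \in S$, and $\monicpolreal$ is closed under $\boxplus_d$ by \cite{marcus2016polynomial}, we conclude $p_d(\mu \boxplus \nu) \in \monicpolreal$ for all $d \in S$, giving $\mu \boxplus \nu \in \rrad(S)$.

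For part (2), I would first translate convergence of moments into convergence of free cumulants. Because each $\kappa_n$ is a polynomial in $m_1, \dots, m_n$ (via M\"obius inversion of the moment-cumulant formula), the hypothesis $m_n(\mu_k) \to m_n(\mu)$ for all $n$ immediately yields $\kappa_n(\mu_k) \to \kappa_n(\mu)$ for every $n$. Fix now $d \in S$. Then for each $1 \leq n \leq d$, $\kappa_n^d(p_d(\mu_k)) = \kappa_n(\mu_k) \to \kappa_n(\mu) = \kappa_n^d(p_d(\mu))$. Applying the cumulant-to-coefficient formula \eqref{eq:cumulanttocoefficient}, which expresses each coefficient $a_j^{p_d(\mu_k)}$ as a polynomial in $\kappa_1^d(p_d(\mu_k)), \dots, \kappa_d^d(p_d(\mu_k))$, I conclude that the coefficients of $p_d(\mu_k)$ converge to those of $p_d(\mu)$ as $k \to \infty$.

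To finish, I would invoke the fact that the multiset of roots of a monic polynomial depends continuously on its coefficients (a standard consequence of Hurwitz's theorem). Since $p_d(\mu_k) \in \monicpolreal$ for every $k$, and limits of real numbers are real, it follows that all $d$ roots of $p_d(\mu)$ are real, so $p_d(\mu) \in \monicpolreal$. As $d \in S$ was arbitrary, $\mu \in \rrad(S)$. The main delicate step is the continuity-of-roots argument; beyond that, both parts reduce to the by-now-familiar interplay between the finite free cumulants, the coefficients, and the linearization property of $\boxplus_d$.
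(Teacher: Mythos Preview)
Your proof is correct and follows essentially the same approach as the paper: both parts use the linearization of $\boxplus_d$ to identify $p_d(\mu\boxplus\nu)$ with $p_d(\mu)\boxplus_d p_d(\nu)$ in part (1), and the chain moments $\to$ free cumulants $\to$ finite free cumulants $\to$ coefficients plus continuity of roots in part (2). The only difference is that you spell out the continuity-of-roots step via Hurwitz's theorem, whereas the paper simply states that the limit of real-rooted polynomials is real-rooted.
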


\begin{proof}
\begin{enumerate}

    \item Since $\mu\in\rrad(S_1)$, $\nu\in\rrad(S_2)$ and $S=S_1\cap S_2$, we know that $p_d(\mu)$ and $p_d(\nu)$ are real-rooted for all $d\in S$. Then if we fix a $d\in S$, the finite free convolution $p_d(\mu)\boxplus_d p_d(\nu)$ is also real-rooted. Furthermore, for $1\leq n\leq d$ we have that
    \[
    \kappa_n^d(p_d(\mu)\boxplus_d p_d(\nu))= \kappa_n^d(p_d(\mu))+\kappa_n^d(p_d(\nu))=\kappa_n(\mu)+\kappa_n(\nu)=\kappa_n(\mu\boxplus \nu).
    \]
    So we get that $p_d(\mu\boxplus\nu)=p_d(\mu)\boxplus_d p_d(\nu)$ is real-rooted for all $d\in S$ and we conclude that $\mu\boxplus\nu\in\rrad(S)$.
    \item We start by fixing a $d\in S$. Since convergence in moments implies convergence in free cumulants, we know that 
    \[
    \lim_{k\to \infty} \kappa_n^d(p_d(\mu_k))=\lim_{k\to \infty} \freec_n(\mu_k)=\freec_n(\mu)=\kappa_n^d(p_d(\mu)), \qquad \text{ for } n=1,2,\dots,d.
    \]
    By the coefficient-cumulant formula it follows that the polynomial $p_d(\mu)$ is the limit of the real-rooted polynomials $p_d(\mu_k)$. This implies that $p_d(\mu)$ is real-rooted too. Since this works for all $d\in S$ we conclude that $\mu\in\rrad(S)$.
\end{enumerate}
\end{proof}

\begin{example}  As a direct consequence of Examples \ref{exm.constant.pol.defi}, \ref{exm.hermite.pol.defi} and \ref{exm.laguerre.pol.defi} the following distributions belong to $\rrad$.
\begin{enumerate}
\item The Dirac distribution $\delta_a$ for all $a\in\R$.
\item The semicircular distribution $\mu_{sc}$. 
\item The Marchenko-Pastur distribution of parameter $\lambda\geq 1$.
\end{enumerate}
Moreover, from Example \ref{exm.laguerre.pol.defi}, it can also be seen that for integers $0\leq r\leq s$, the Marchenko-Pastur distribution of parameter $\lambda=r/s$ belongs to $\rrad(s\mathbb{N})$.
\end{example}

\begin{example}[Free compound Poisson distribution] A free compound Poisson distribution of parameter $\lambda$ and jump distribution $\nu$, denoted $\pi(\lambda,\nu)$ is  the probablity measure such that $\freec_n(\pi(\lambda,\nu))=\lambda m_n(\nu)$, and we denote it by $\pi(\lambda,\nu)$.
In our case, we restrict to measures with parameters of specific form. We take $\lambda$ to be a rational number, $\lambda=r/s$, and the jump distribution to be of the form
$$\nu=\frac{1}{n}\sum^n_{i=1}\delta_{\lambda_n}.$$
It can be seen from Example  \ref{exm.laguerre.pol.defi} and Proposition \ref{prop.real.rooted.all.degrees.distributions} that under the above assumption  $$\pi(\lambda,\nu)\in \rrad(sn \mathbb{N}).$$
\end{example}

%\begin{remark}
%{\color{red} (check)} As a direct application of 1 and 2 in the previous proposition we know that $\nu_i \boxplus \dots\boxplus\nu_n\in\rrad$ where for $i=1,\dots,n$ we have that $\nu_i$ is a free Poisson distribution of parameter $\lambda_i\geq 1$. Moreover, by part 3, limits of this type of distributions are also in $\rrad$. For comparison, recall that every compound free Poisson distribution can be approximated by sums of finitely many free Poisson elements of parameters $\lambda_1>0$ (not necessarily bigger than 1), see for instance Exercise 12.25 of \cite{nica2006lectures}. Thus we know %for sure that some 
%that these compound free Poisson distributions are in $\rrad$.
%\end{remark}

The main reason to study distributions $\mu\in\rrad(S)$ is that $(p_d(\mu))_{d\geq 1}$, the canonical sequence  of polynomials associated to $\mu$, has an infinitesimal asymptotic distribution that can be explicitly described in terms of the distribution $\mu$. Depending on whether the information of the distribution $\mu$ is given to us from its moments or its free cumulants, it is sometimes more convenient to use the infinitesimal Cauchy transform or the infinitesimal $R$-transform.

\begin{comment}
\begin{notation}
Given a infinitesimal distribution $\tilde{\mu}=(\mu,\mu')$ with moments $(m_n)_{n\geq 1}$ and infinitesimal moments $(m'_n)_{n\geq 1}$ the Cauchy transform of $\mu$, $G$, and the infinitesimal Cauchy transform, $G_{inf}$, are defined as
\[G(z):=\frac{1}{z}\sum_{n\geq 0} m_n z^{-n} \qquad \text{and}  \qquad G_{inf}(z):=\sum_{n\geq 1} m'_n z^{-n-1}.\]
Let us also consider the compositional inverse of $G$, $K(z):=G^{\langle-1\rangle}(z)$, and the infinitesimal $R$-transform given by
\[R_{inf}(z):=\sum_{n\geq 1} \kappa'_n z^{n-1},\]
where $\kappa'_n$ are the infinitesimal cumulants.
\end{notation}
\end{comment}

%\begin{remark}
%\label{rem.inf.cauchy.R.transform}
The \emph{infinitesimal $R$-transform} of a pair of measures $(\mu,\mu')$, denoted as $R_{inf}(z)$, is a formal series in $z$, that depends on $\mu$ and $\mu'$. As its name suggests, it is the infinitesimal analog of the $R$-transform, due to the fact that it linearizes the infinitesimal free convolution. As we do not work with infinitesimal freness, we will just use the fact from \cite[Theorem 2]{mingo2019non} that the infinitesimal $R$-transform can be written in terms of the $K$-transform (defined in Subsection \ref{sec:freeprobability}) and the infinitesimal Cauchy transform (defined in the introduction) as follows:
\begin{equation}
\label{eq.inf.cauchy.R.transform}
    R_{inf}(z)=-K'_\mu(z)G_{inf}(K_\mu(z)).
\end{equation}
The \emph{infinitesimal free cumulants}, denoted as $(\kappa'_n)_{n \geq 1}^\infty$, are the coefficients of $R_{inf}(z)$.
%\end{remark}

To prove the main result of this section we will use the following lemma regarding the formal power series of $\log(F'(z))$.

\begin{lemma} \label{lemma:diffquotient}
Let $F=F_\mu$ be the $F$- transform of some probability measure $\mu$ with bounded support. 
Then we have the series representation $$\log [F'(z)] =
\sum_{\p,\q\geq 1} \alpha_{\p,\q }\frac{z^{-\p-\q}}{\p\q},$$
where $\alpha_{\p, \q} = \sum_{\pi \in S_{NC}(\p, \q)} \kappa_\pi. $
\end{lemma}

\begin{proof}
We start by analyzing the series in two variables
$$H(z,w):=\frac{F(z)-F(w)}{z-w}.$$

Recall that $F$ is a formal power series of the form $F(z)=z+a_0+\sum^\infty_{n=1}b_nz^{-n}$. Thus, expanding the difference $F(z)-F(w)$ yields
\begin{eqnarray*}
F(z)-F(w)&=&z-w+ \sum^\infty_{n=1}b_n(z^{-n}-w^{-n})
\\&=&z-w+ \sum^\infty_{n=1}b_n(z^{-1}-w^{-1})\Big(\sum_{j+k=n-1}z^{-k}w^{-j}\Big)
\\&=&(z-w)-(z-w)z^{-1}w^{-1}\sum^\infty_{n=1}b_n \sum_{j+k=n-1}z^{-k}w^{-j},
\end{eqnarray*}
from where
\begin{equation}
\label{eq.FminusF}
H(z,w)=1-z^{-1}w^{-1}\sum^\infty_{n=1}b_n \sum_{j+k=n-1}z^{-k}w^{-j}.
\end{equation}

In particular, if we take $w=z$ the one variable series $H(z,z)$ is well defined and is given by
$$H(z,z)=1-\sum^\infty_{n=1}nb_n z^{-n-1}=F'(z).$$

Thus, we are interested in computing the series $\log[H(z,z)]$. Notice that this is almost the content of Lemma \ref{lem:secondordercauchy}, which asserts that
$$\frac{\partial^2}{\partial z  \partial w}\log\left(H(z,w)\right)=\frac{1}{zw} \sum_{\p, \q \geq 1} \alpha_{\p, \q} z^{-\p} w^{-\q}.$$
Formally integrating with respect to $w$ and with respect to $z$, we obtain 
\begin{equation}
\label{eq.logH}
\log\left(H(z,w)\right)=\sum_{\p,\q\geq 1} \alpha_{\p,\q }\frac{z^{-\p}w^{-\q}}{\p\q} + f_1(z) +f_2(w),
\end{equation}
where $f_1(z)$ is the constant (in $w$) term obtained when integrating with respect to $w$ and $f_2(w)$ is the constant (in $z$) term obtained when integrating with respect to $z$. Notice that evaluating $w=z $ yields 
$$\log[F'(z)]=\log\left[H(z,z)\right]=\sum_{\p,\q\geq 1} \alpha_{\p,\q }\frac{z^{-\p-\q}}{\p\q}+f_1(z)+f_2(z).$$

Thus, to conclude the proof we just need to check that $f_1=f_2=0$, which amounts to observing that $\log\left(H(z,w)\right)$ does not have any terms depending only on $z$ or only on $w$. Indeed, from equation \eqref{eq.FminusF} we observe that $H(z,w)=1+z^{-1}w^{-1}J(z,w)$ where $J$ is a series in two variables where all the powers of $w$ and $z$ are non-positive, meaning that $H(z,w)-1$ does not have any term depending only on $z$ or only on $w$. Therefore, when composing with the logarithm series $\log(1+x)=\sum_{n=1}^\infty \frac{(-1)^{n+1}x^n}{n},$ it is now clear that $$\log\left(H(z,w)\right)= \sum_{n=1}^\infty \frac{(-1)^{n+1}}{n}(z^{-1}w^{-1}J(z,w))^n$$
does not contain any terms depending only on $z$ or only on $w$, as desired.
\end{proof}

%With this notation in hand, we can prove the functional equation for $G_{inf}$ advertised in Theorem \ref{thm:infinitesimaldists}, together with an analogous relation for the infinitesimal $R$-transform.
Now, we can prove the functional equation for $G_{inf}$ advertised in Theorem \ref{thm:infinitesimaldists}, together with an analogous relation for the infinitesimal $R$-transform.

\begin{theorem}
\label{thm.rrad}
Let $\mu\in\rrad$. Then, its canonical sequence of monic polynomials, $(p_d(\mu))_{d= 1}^\infty$, has an infinitesimal asymptotic distribution $(\mu,\mu')$. Moreover, the infinitesimal Cauchy transform and the infinitesimal $R$-transform are given by:
\begin{eqnarray}
G_{inf}(z)&=&\frac{G_\mu''(z)}{2G_\mu'(z)}-\frac{G_\mu'(z)}{G_\mu(z)}, \label{eq.G.inf.rrad} \\
R_{inf}(z)&=&\frac{K''_\mu(z)}{2K'_\mu(z)}+\frac{1}{z}. \label{eq.R.inf.rrad}
\end{eqnarray}
\end{theorem}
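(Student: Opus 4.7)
The plan is to specialize Theorem \ref{thm:3} to the canonical polynomials $p_d(\mu)$ and then convert the resulting formula for the infinitesimal moments into an identity for transforms via Lemma \ref{lem:secondordercauchy}. Since $\kappa_n^d(p_d(\mu)) = \kappa_n(\mu)$ for every $n \leq d$ by construction, the classical moment-cumulant formula yields $\sum_{\pi\in NC(n)}\kappa_\pi^d(p_d(\mu)) = m_n$. Plugging this into Theorem \ref{thm:3} and multiplying by $d$ gives
\begin{equation*}
d\,(m_n(p_d(\mu)) - m_n) = -\frac{n}{2}\sum_{\p+\q=n}\frac{\alpha_{\p,\q}}{\p\q} + O(1/d),
\end{equation*}
where $\alpha_{\p,\q} := \sum_{\pi \in S_{NC}(\p,\q)}\kappa_\pi(\mu)$. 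Letting $d \to \infty$ establishes that the infinitesimal asymptotic distribution $(\mu,\mu')$ exists, with $m'_n = -\tfrac{n}{2}\sum_{\p+\q=n}\alpha_{\p,\q}/(\p\q)$.

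The next step is to evaluate $G_{inf}(z) = \sum_{n\geq 1} m'_n z^{-n-1}$. Introducing the diagonal generating function $H(z) := \sum_{\p,\q\geq 1}\tfrac{\alpha_{\p,\q}}{\p\q}z^{-\p-\q}$ and using $\partial_z z^{-\p-\q} = -(\p+\q)z^{-\p-\q-1}$, a direct reindexing gives $G_{inf}(z) = \tfrac{1}{2}H'(z)$. To compute $H$, I would consider the two-variable function $F(z,w) := \sum_{\p,\q\geq 1}\tfrac{\alpha_{\p,\q}}{\p\q}z^{-\p}w^{-\q}$, whose mixed partial is precisely the series $G(z,w)$ of Lemma \ref{lem:secondordercauchy}. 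Hence $F(z,w)$ and $\log\!\bigl((G_\mu(w)-G_\mu(z))/(z-w)\bigr)$ differ only by a term of the form $g(z)+h(w)$. Matching boundary behavior as $w\to\infty$, where $F\to 0$ and $(G_\mu(w)-G_\mu(z))/(z-w) \sim G_\mu(z)/w$, fixes these correction functions and yields
\begin{equation*}
F(z,w) = \log\!\left(\frac{G_\mu(w)-G_\mu(z)}{(z-w)\,G_\mu(z)\,G_\mu(w)}\right).
\end{equation*}
Taking the diagonal limit $w \to z$ by L'H\^opital's rule gives $H(z) = F(z,z) = \log(-G_\mu'(z)) - 2\log G_\mu(z)$, and differentiating produces
\begin{equation*}
G_{inf}(z) = \tfrac{1}{2}H'(z) = \frac{G_\mu''(z)}{2G_\mu'(z)} - \frac{G_\mu'(z)}{G_\mu(z)},
\end{equation*}
which is \eqref{eq.G.inf.rrad}.

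Finally, \eqref{eq.R.inf.rrad} follows by substituting this expression into \eqref{eq.inf.cauchy.R.transform}: $R_{inf}(z) = -K_\mu'(z)\,G_{inf}(K_\mu(z))$. Differentiating $G_\mu(K_\mu(z)) = z$ twice yields $G_\mu'(K_\mu(z)) = 1/K_\mu'(z)$ and $G_\mu''(K_\mu(z)) = -K_\mu''(z)/K_\mu'(z)^3$, and a routine algebraic simplification gives $R_{inf}(z) = K_\mu''(z)/(2K_\mu'(z)) + 1/z$. The principal technical obstacle is the middle step: correctly identifying the ``constants of integration'' $g(z)$ and $h(w)$ when lifting $G(z,w)$ to $F(z,w)$, since the naive diagonal of $\log\!\bigl((G_\mu(w)-G_\mu(z))/(z-w)\bigr)$ is singular at $w=z$; the renormalization by $G_\mu(z)G_\mu(w)$ forced by the asymptotic analysis at infinity is precisely what produces the $-G_\mu'(z)/G_\mu(z)$ term distinguishing $G_{inf}$ from $\tfrac{1}{2}(\log(-G_\mu'))'$.
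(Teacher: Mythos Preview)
Your proposal is correct and follows essentially the same route as the paper: both specialize Theorem~\ref{thm:3} to obtain $m_n' = -\tfrac{n}{2}\sum_{\p+\q=n}\alpha_{\p,\q}/(\p\q)$, integrate the functional equation of Lemma~\ref{lem:secondordercauchy}, fix the integration ``constants'' by sending a variable to infinity (using $G_\mu(w)\sim 1/w$), take the diagonal $w\to z$, and then derive $R_{inf}$ from \eqref{eq.inf.cauchy.R.transform} by differentiating $G_\mu(K_\mu(z))=z$ twice. The only cosmetic difference is that you package the correction terms into the single closed form $F(z,w)=\log\!\bigl((G_\mu(w)-G_\mu(z))/((z-w)G_\mu(z)G_\mu(w))\bigr)$, whereas the paper works with $g_1'(z)=-G_\mu'(z)/G_\mu(z)$ and $g_2'(w)$ separately; your packaging is arguably cleaner but the content is identical.
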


\begin{proof}
%The main tools to prove this theorem are  Lemma \ref{lem:secondordercauchy} and Theorem \ref{thm:3}. % to compute the infinitesimal distribution of the sequence 

We will combine Lemma \ref{lem:secondordercauchy} and Theorem \ref{thm:3}. to compute the infinitesimal distribution of the sequence  $(p_d(\mu))_{d=1}^\infty$.
First, if $(p_d(\mu))_{d=1}^\infty$ is the canonical sequence
of $\mu\in \rrad$, equation \eqref{eqthm13} from Theorem \ref{thm:3} may be written as
$$m_n(p) - m_n(\mu)= - \, \frac{n}{2 d}  \sum_{ \substack{\p+\q=n\\ \pi \in S_{NC}(\p,\q)}} \frac{\kappa_\pi^d(p)}{\p\q}\, +\,O(1/d^2),$$
which implies that $$m_n' = - \, \frac{n}{2 d}  \sum_{ \substack{\p+\q=n\\ \pi \in S_{NC}(\p,\q)}} \frac{\kappa_\pi^d(p)}{\p\q}\,= - \frac{n}{2}\sum_{\p+\q=n}\frac{1}{\p\q}\alpha_{\p,\q},$$ where we recall the notation from Lemma \ref{lem:secondordercauchy}, $\alpha_{\p, \q} := \sum_{\pi \in S_{NC}(\p, \q)} \kappa_\pi(\mu)$. 

In terms of generating functions we have obtained the relation
\begin{equation} \label{ginf}
G_{inf}(z)=\sum_{n\geq1} m_n'z^{-n-1}=-\frac{1}{2}\sum_{n\geq1} \left(n\sum_{\p+\q=n}\frac{\alpha_{\p,\q}}{\p\q} \right) z^{-n-1}.\end{equation}

% In order to prove that the right-hand side of \eqref{ginf} coincides with the right-hand side of \eqref{eq.G.inf.rrad} we use (\ref{eq:2ndand1stCauchy}) from  Lemma \ref{lem:secondordercauchy}. Indeed, integrating (w.r.t to w and then w.r.t. z) the functional equation (\ref{eq:2ndand1stCauchy}) we get  
%\begin{eqnarray}
%\label{eq:logarithmidentity}
%\log\left(\frac{ F_\mu(w)-F_\mu(z)}{z-w}\right)+f_1(z)+f_2(w)=\sum_{\p,\q\geq 1} \alpha_{\p,\q }\frac{z^{-\p}w^{-\q}}{\p\q} ,\end{eqnarray}
%for some functions $f_1(z)$ and $f_2(w)$. The point here is that when $z=w$ the right-hand side of the above expression  is similar (i.e. equal except for a $-n/2$  in each term) to the expansion of $G_{inf}(z)$ given in (\ref{ginf}).

%Now, to obtain $f_1$ (and similarly for $f_2$), we take the partial derivative with respect to $z$ to deduce that
%\begin{equation*}
%\frac{\partial}{\partial z }\log\left(\frac{F_\mu(w)-F_\mu(z)}{z-w}\right)+f_1'(z)= -\sum_{\p,\q\geq 1} \alpha_{\p,\q }\frac{z^{-\p-1}w^{-\q}}{\q} 
%\end{equation*}
%from which
%$$f_1'(z)=\frac{1}{z-w} - \frac{F_\mu'(z)}{F_\mu(w) - F_\mu(z)}-\sum_{\p,\q\geq 1} \alpha_{\p,\q }\frac{z^{-\p-1}w^{-\q}}{\q}. $$
%Since $F_\mu(w)$ is of the form $F(w)= w+a_0+ a_1w^{-1}+\cdots$, one may deduce that 
%$$f_1'(z)=0~ \quad \left(\text{and similarly }f_2'(w)=0\right).$$
%Thus \eqref{eq:logarithmidentity} is actually written in the following form
Now, from Lemma \ref{lemma:diffquotient} we have that

\begin{eqnarray*}
\log [F_\mu'(z)] &=&
\sum_{\p,\q\geq 1} \alpha_{\p,\q }\frac{z^{-\p-\q}}{\p\q}\\&=& \sum_{n\geq 1}\left(\sum_{\p+\q= n} \frac{\alpha_{\p,\q }}{\p\q} \right) z^{-n}
%\\&=& \log \lim_{w\to z} \frac{ G_\mu(z)-G_\mu(w)}{z-w}+g_1(z)+g_2(z).
\end{eqnarray*}
Finally, by taking a derivative with respect to $z$ and dividing by $2$, we arrive to
$$\frac{1}{2}\frac{\partial}{\partial z}\log [F_\mu'(z)]= -\frac{1}{2}\sum_{n\geq1}
n \left (\sum_{\p+\q=n}\frac{\alpha_{\p,\q}}{\p\q} \right) z^{-n-1}.
$$
Equation \eqref{eq.G.inf.rrad}, now follows from \eqref{ginf} and the fact that 
\begin{eqnarray*}\frac{1}{2} \frac{\partial}{\partial z}\log [F_\mu'(z)]=\frac{1}{2}\frac{F_\mu''(z)}{F_\mu'(z)}=\frac{G_\mu''(z)}{2G_\mu'(z)}-\frac{G_\mu'(z)}{G_\mu(z)}.\end{eqnarray*}

To compute $R_{inf}(z)$ we will use \eqref{eq.inf.cauchy.R.transform}. First, from the equation we just obtained
$$G_{inf}(K_\mu(z))=\frac{G_\mu''(K_\mu(z))}{2G_\mu'(K_\mu(z))}-\frac{G_\mu'(K_\mu(z))}{G_\mu(K_\mu(z))}.$$
Now, differentiating the equation $G_\mu(K_\mu(z))=z$  we have 
$$G_\mu'(K_\mu(z))=\frac{1}{K_\mu'(z)}\quad  \text{and}\quad  G_\mu''(K_\mu(z))=-\frac{K_\mu''(z)}{K_\mu'(z)^3}.$$
Then, by \eqref{eq.inf.cauchy.R.transform} we conclude that
$$R_{inf}(z)=-K_\mu'(z)\left(- \frac{K_\mu''(z)}{2K_\mu'(z)^3}\cdot K_\mu'(z) -\frac{1}{zK_\mu'(z)}\right)=\frac{K_\mu''(z)}{2K_\mu'(z)}+\frac{1}{z},$$
as desired.
\end{proof}
\iffalse
Finally, to conclude the proof of Theorem \ref{thm:infinitesimaldists}, we show that we may rewrite Theorem \ref{thm.rrad} in terms of the inverse Markov transform (see \cite{kerov1998}). Given a probability measure with compact support, the
inverse Markov transform of $\mu$, denoted by $ M(\mu)$, is the unique Schwartz distribution satisfying $$-\frac{\partial}{\partial z}\log(G_\mu(z))=-\frac{G'_\mu(z)}{G_\mu(z)}=G_{M(\mu)}(z).$$
Furthermore, if $M(\mu)$ is a probability measure, we may apply twice the inverse Markov transform and obtain the relation
$$\frac{G'_\mu(z)}{G_\mu(z)}-\frac{G''_\mu(z)}{G'_\mu(z)}=G_{M(M(\mu))}(z),$$
which in our case means that
$
G_{inf}(z)=\frac{1}{2}\left(G_{M(\mu)}(z)-G_{M(M(\mu))}(z)\right).
$
\textcolor{blue}{Now, since weak limits of signed measures are signed measures we have that $\mu'$ is a signed measure, and therefore $G_{inf}(z)$ is the Cauchy transform of a signed measure. Moreover, because the Cauchy transform of sums of signed measures is the sum of the Cauchy transforms of each of the signed measures, we have that  $G_{M(M(\mu))}(z) = 2G_{inf}(z)-G_{M(\mu)}(z)$ is the Cauchy transform of a signed measure, and therefore $M(M(\mu))$ is a signed measure. Moreover, applying the same reasoning, from $
G_{inf}(z)=\frac{1}{2}\left(G_{M(\mu)}(z)-G_{M(M(\mu))}(z)\right)
$ we get 
\begin{equation}\label{eq:Markov} \mu'=\frac{1}{2}\left(M(\mu)-M({M}(\mu))\right).\end{equation}}
\fi

\subsection{Examples}
\label{sec:examples}

\begin{example}[Infinitesimal distribution of Hermite polynomials]
Let $\mu_{sc}$ be the semicircular distribution. As explained in Example \ref{exm.hermite.pol.defi}, we know that in this case the $(p_d(\mu_{sc}))_{d= 1}^\infty$ are precisely the Hermite polynomials $(\herm_d)_{d= 1}^\infty$. It is a well-known fact, see for instance \cite[eq. (2.23)]{nica2006lectures}, that the Cauchy transform of $\mu_{sc}$ is given by $G_{\mu_{sc}}(z)=\frac{z - \sqrt{z^2-4}}{2}$. Hence 
$$G'_{\mu_{sc}}(z) = \frac{\sqrt{z^2-4}-z}{2\sqrt{z^2-4}} \qquad \text{and} \qquad G''_{\mu_{sc}}(z) = \frac{2}{(z^2-4)^{\frac{3}{2}}},$$
so we can use \eqref{eq.G.inf.rrad} to compute
\begin{equation}
G_{inf}(z) 
= \frac{\sqrt{z^2-4}-z}{2(z^2-4)}. \label{eq.G.inf.hermite}
\end{equation} 
Observe that \eqref{eq.G.inf.hermite} can be rewritten as the difference of a symmetric arcsine distribution $\mu_{arc}$ and a Bernoulli distribution $\mu_{ber}$,
$$G_{inf}=\frac{1}{2}\left(\frac{1}{\sqrt{z^2-4}}-\frac{z}{(z-2)(z+2)}\right)=\frac{1}{2}\left(G_{\mu_{arc}}-G_{\mu_{ber}} \right),$$
which is no surprise given formula \eqref{eq:infinitesimalmarkov}, since $\mu_{arc}$ and $\mu_{ber}$ are related to the semicircle distribution via applying once or twice the inverse Markov transform.   Thus the infinitesimal distribution is $$d\mu'(x)=\frac{1}{2}\left(\frac{1}{\pi}\frac{1}{\sqrt{4-x^2}}dx-\frac{\delta_{-2}+\delta_{2}}{2}\right),$$ 
and the infinitesimal moments then are given by $m'_{2n+1}=0$ and
\begin{equation}
\label{eq.inf.moments.hermite}
m'_{2n}=\frac{1}{2}\left(\binom{2n}{n}- 2^{2n} \right), \qquad \forall n\geq 1.
\end{equation} 
This closed formula for the infinitesimal moments can also be obtained directly using the formula for the number of annular pair partitions in $S_{NC}(\p, \q)$ \cite{bousquet2000enumeration}. Notice that infinitesimal moments are (up to a minus sign) given by Sloane's OEIS sequence A000346: $1, 5, 22, 93, 386, 1586, 6476,\dots$.

Finally, from $\eqref{eq.R.inf.rrad}$ since $K_{\mu_{sc}}(z)=\frac{1}{z}+z$ we can compute the infinitesimal $R$-transform $R_{inf}(z) = \frac{z}{z^2-1}$ and from this it is easy to check that the infinitesimal cumulants are $\freec'_{2n+1}=0$ and $\freec'_{2n}=-1$.
\end{example}

\begin{remark}[Hermite polynomials and GOE]
Observe that the infinitesimal Cauchy transform obtained in \eqref{eq.G.inf.hermite} coincides (up to a sign) with the asymptotic infinitesimal distribution of the Gaussian Orthogonal Ensemble, see \cite[Remark 28]{mingo2019non}. Of course this also means that their infinitesimal moments and cumulants coincide (up to a sign), see \cite[Lemma 23 and Corollary 27]{mingo2019non}. Therefore, we just found that the fluctuations of the Hermite polynomials are equal to minus the fluctuations of the GOE, and are given by \eqref{eq.inf.moments.hermite}. This type of result has already appeared in the literature \cite{kornyik2016wigner}. Moreover, the general case for Wigner matrices was solved in  \cite{enriquez2016asymptotic}.

The fact that the $O(1)$ and the $O(1/d)$ terms coincide for the Hermite polynomials and the GOE, prompts the question if this continues to be true for the $O(1/d^2)$ term. However, this turns out to be false and can be observed from small values of $n$. Indeed, from \cite{mingo2019non} and the finite moment-cumulant formula we get the formulas in Table \ref{tab:my_label}. 

\begin{table}[h]
    \centering
    \begin{tabular}{|c|l|l|} \hline
        $n$ & GOE & Hermite polynomial \\ \hline
        $2$ & $1 + d^{-1}$ & $1 - d^{-1}$ \\
        $4$ & $2 + 5d^{-1}+5d^{-2}$ & $2 - 5d^{-1}+3d^{-2}$ \\ 
        $6$ & $5 + 22d^{-1} + 52 d^{-2}+41 d^{-3}$ & $5 - 22d^{-1} + 32 d^{-2}-15 d^{-3}$ \\ \hline
        \end{tabular}
    \caption{Moments of the GOE compared to the moments of the Hermite polynomials.}
    \label{tab:my_label}
\end{table}
\end{remark}

\begin{example}[Infinitesimal distribution of Laguerre polynomials]
Let $\pi(\lambda)$ be the Marchenko-Pastur (free poisson) distribution of parameter $\lambda$. As explained in Example \ref{exm.laguerre.pol.defi}, we know that $(p_d(\pi(\lambda)))_{d\geq 1}$ are the Laguerre polynomials $(\lag_d^{(\lambda)})_{d=1}^\infty$. It is a well-known fact, see for instance \cite[Lecture 12]{nica2006lectures}, that the Cauchy transform of $\pi(\lambda)$ is $$G_{\pi(\lambda)}(z)=\frac{z+1-\lambda +\sqrt{(z-\lambda -1)^2-4\lambda}}{2z}.$$  Again, we can use \eqref{eq.G.inf.rrad} to compute the infinitesimal Cauchy transform which after simplifications has the form
\begin{eqnarray*}
G_{inf}(z) &=& \frac{1}{2}\left(\frac{1}{ \sqrt{(-\lambda + z - 1)^2 - 4 \lambda)}}- \frac{-\lambda + z - 1}{(-\lambda + z - 1)^2 - 4 \lambda}  \right)\label{eq.G.inf.laguerre}
\end{eqnarray*} 
and then we get the infinitesimal distribution,
$$d\mu'(x)=\frac{1}{2}\left(\frac{1}{\pi}\frac{1}{\sqrt{4\lambda-(x-\lambda-1)^2}}dx-\frac{\delta_{(1+\sqrt{\lambda})^2}+\delta_{(1-\sqrt{\lambda})^2}}{2}\right).$$

The moments of $\mu'$ are given then by the difference of the moments of the measures, namely

\begin{equation}
m'_{n}=\frac{1}{2}\left(\sum^n_{k=0}\binom{n}{k}^2\lambda^k-\sum^n_{k=0}\binom{2n}{2k}\lambda^k\right), \qquad \forall n\geq 1.
\end{equation} 
Also since the free cumulants of $\pi(\lambda)$ equal $\lambda$ then $K_{\pi(\lambda)}(z)=\frac{1}{z}+\frac{\lambda}{1-z}$. From \eqref{eq.R.inf.rrad} we can compute the infinitesimal $R$-transform 
$$R_{inf}(z)=\frac{1}{z-1}-\frac{1}{2}\left(
\frac{\sqrt{\lambda}-1}{z-(\sqrt{\lambda}-1)}+\frac{\sqrt{\lambda}+1}{z-(\sqrt{\lambda}+1)}\right),$$
which gives $\kappa'_{n} =1-\frac{1}{2} ((1+\sqrt{\lambda})^n+(1-\sqrt{\lambda})^n)=-\sum^{[n/2]}_{k=1} \binom{n}{2k}\lambda^k$.

For $\lambda=1$ the infinitesimal moments are
\begin{equation}
\label{eq.inf.moments.laguerre}
m'_{n}= \frac{1}{2}\binom{2n}{n}- 4^{n-1}, \qquad \forall n\geq 1.
\end{equation} 
which are given (up to the minus sign) in Sloane's OEIS sequence A008549: 1,6,29,130, 562,2380, 9949, 41226, 169766,$\dots$ %695860 \dots$.
\end{example}

\begin{remark}[Laguerre polynomials and real Wishart matrices]
It is natural to ask if the asymptotic infinitesimal distribution of the Laguerre polynomials coincides with the one of an ensemble of real Wishart matrices. 
The first infinitesimal moments coincide (up to a sign) and are given by 
\begin{eqnarray*}
|m_1'|&=&0, \qquad |m_2'|=\lambda,\qquad |m'_3|=3\lambda + 3\lambda^2, \qquad |m_4'|=6 \lambda+ 17 \lambda^2 + 6 \lambda^3,
 \\ |m_5'|&=&10\lambda + 55\lambda^2 + 55 \lambda^3 + 10 \lambda^4,  \qquad |m_6'|=15 \lambda + 135 \lambda^2 + 262 \lambda^3 + 135 \lambda^4 + 15 \lambda^5,\\
|m_7'|&=&21\lambda + 280 \lambda^2 + 889 \lambda^3 + 889 \lambda^4 + 280 \lambda^5 + 21 \lambda^6.
\end{eqnarray*}
This is true for all $n$, as we learnt from James Mingo, \cite{mingovasquez2021}.

\iffalse
\begin{table}[h]
    \centering
    \begin{tabular}{|c|l|l|} \hline
        $n$ & Wishart matrix & Laguerre polynomial  \\ \hline
        $1$ & $1$ & $1$ \\
        $2$ & $2 + d^{-1}$ & $2 + d^{-1}$ \\ 
        $3$ & $5 + 6d^{-1} + 4 d^{-2}$ & $5 + 6d^{-1} + ?? d^{-2}$ {\color{red}(check)} \\ 
        $4$ & $14 + 29d^{-1} + 42d^{-2}+ 20d^{-3}$ & $14 + 29d^{-1} + ??  d^{-2}+??  d^{-3}$ {\color{red}(check)} \\ \hline
        \end{tabular}
    \caption{Moments of Wishart Matrices compared to the moments of the Laguerre polynomials.}
    \label{tab:my_label}
\end{table}
\fi
\end{remark}

\bigskip

\noindent \textbf{Acknowledgements:} We thank Roland Speicher for leading our attention to the M\"obius algebra and the paper \cite{speicher2000}. We also thank James Mingo, Kamil Szpojankowski and Takahiro Hasebe for various very helpful discussions.

%%%%%%%%%%%%%%%%%%%%%%%%%%%%%%%%%%%%%%%%%%%%%%%
%%%%%%%%%%%%%%%%%%%%%%%%%%%%%%%%%%%%%%%%%%%%%%%%%%

\bibliographystyle{alpha}
\bibliography{FracConv}

\begin{appendix}

\section{Proof of Theorem \ref{thm.cumulant.of.products} via the M\"obius Algebra}
\label{sec:proofviaMobius}

We follow the presentation and notation of \cite{speicher2000}.

A partially ordered set $(P, \leq)$ is called a lattice if each two-element subset, $\{a, b\} \subset P$, has a least upper bound or join, denoted by $a\vee b$, and a  greatest lower bound or meet, denoted by $a\wedge b$.

\begin{notation}
Let $(P, \leq)$ be a finite lattice. 
\begin{enumerate}
    \item For functions $f,g:P\to \mathbb{C}$  we denote by $f*g: P\to \mathbb{C}$ the function defined by
$$f*g~(\pi):=\sum_{\sigma_1\vee\sigma_2=\pi}f(\sigma_1)g(\sigma_2).$$
\item
For a function $f:P\to \mathbb{C}$  we denote by $F(f):P\to \mathbb{C}$ the function defined by
$$F(f) (\pi):=\sum_{\sigma\leq \pi}f(\sigma).$$
\end{enumerate} 
\end{notation}

The main relation between the operation $*$ and the function $F$ is the following.

\begin{proposition}[Proposition 3.3, \cite{speicher2000}] \label{prop: MobiusAlgebra}
Let $(P,\leq)$ be a finite lattice. Then for any functions $f,g:P\to\mathbb{C}$ we have that 
$$F(f*g)=F(f)F(g).$$
\end{proposition}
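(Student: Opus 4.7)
The plan is to prove $F(f*g)(\pi) = F(f)(\pi) F(g)(\pi)$ by direct unfolding of definitions, with the only nontrivial input being the universal property of the join $\vee$ in a lattice. The argument should be short: essentially a reindexing of a double sum.

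First I would expand the left-hand side. By the definition of $F$, we have $F(f*g)(\pi) = \sum_{\tau \leq \pi} (f*g)(\tau)$, and plugging in the definition of $*$ yields
\begin{equation*}
F(f*g)(\pi) \;=\; \sum_{\tau \leq \pi} \;\sum_{\sigma_1 \vee \sigma_2 = \tau} f(\sigma_1)\, g(\sigma_2).
\end{equation*}
The two summations combine into a single sum over pairs $(\sigma_1, \sigma_2)$ subject to the constraint $\sigma_1 \vee \sigma_2 \leq \pi$:
\begin{equation*}
F(f*g)(\pi) \;=\; \sum_{\substack{\sigma_1, \sigma_2 \in P \\ \sigma_1 \vee \sigma_2 \leq \pi}} f(\sigma_1)\, g(\sigma_2).
\end{equation*}

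The key observation — and the one step that actually uses the lattice structure — is that in any lattice, $\sigma_1 \vee \sigma_2 \leq \pi$ holds if and only if both $\sigma_1 \leq \pi$ and $\sigma_2 \leq \pi$ hold, since $\sigma_1 \vee \sigma_2$ is by definition the least upper bound of $\{\sigma_1, \sigma_2\}$. Using this equivalence to rewrite the constraint, the sum decouples and factors as
\begin{equation*}
\sum_{\substack{\sigma_1 \leq \pi \\ \sigma_2 \leq \pi}} f(\sigma_1)\, g(\sigma_2) \;=\; \Bigl(\sum_{\sigma_1 \leq \pi} f(\sigma_1)\Bigr) \Bigl(\sum_{\sigma_2 \leq \pi} g(\sigma_2)\Bigr) \;=\; F(f)(\pi)\, F(g)(\pi),
\end{equation*}
which is the desired identity.

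I do not foresee any obstacle here: the only subtle point is the universal property of $\vee$, which makes the constraint on $\tau$ disappear cleanly, and everything else is formal manipulation of finite sums. The proof does not require the existence of a minimum element, invertibility in the incidence algebra, or the Möbius function — only that $P$ is a lattice so that $\vee$ exists and has its universal property.
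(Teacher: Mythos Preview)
Your proof is correct. The paper does not supply its own proof of this proposition; it is quoted as Proposition~3.3 of \cite{speicher2000} and used as a black box, and your argument is the standard direct verification via the universal property of the join.
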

Now, to prove Theorem 1.1 we consider the lattice $P=P(n)$. We notice that setting $$f(\pi)=d^{\blocks{\pi}}\mu(0,\pi)\kappa^d_{\pi}(p)\qquad\text{and}\qquad g(\pi)=d^{\blocks{\pi}}\mu(0,\pi)\kappa^d_{\pi}(q),$$
we have, by 
\eqref{eq:cumulanttocoefficient}
$$F(f)(1_n)=\frac{n!d^n a^p_n}{(d)_n}\qquad\text{and}\qquad F(g)(1_n)=\frac{n!d^n a^q_n}{(d)_n}.$$
Now, we use Proposition \ref{prop: MobiusAlgebra} to obtain $F(g)F(f)=F(g*f)$,  which means that
$$\frac{(n!)^2d^{2n} a^p_na^q_n}{(d)_n^2}= F(f)(1_n) F(g)(1_n)=F(f*g)(1_n)=\sum_{\pi \in \partlat(n)}b_\pi,$$
where
\begin{equation}\label{baux}
b_n=\sum_{\pi\vee \sigma= 1_n} d^{\blocks{\pi}}\mu(0_n,\pi)\kappa^p_{\pi} d^{\blocks{\sigma}}\mu(0_n,\sigma)\kappa^p_{\sigma}.
\end{equation}
Coming back to $p\boxtimes_d q$, from 
\eqref{eq:coeffmultiplication}  we obtain
$$a^{p\boxtimes_d q}_n=\frac{a^p_na^q_n n!}{ (d)_n}=\frac{(d)_n}{n!d^{2n}} \sum_{\pi \in \partlat(n)}b_\pi=\frac{(d)_n}{n!d^{n}} \left(\frac{1}{d^n} \sum_{\pi \in \partlat(n)}b_\pi\right).$$
Finally, comparing to \eqref{eq:cumulanttocoefficient}
we see that
$$b_n=(-1)^{n-1}d^{n+1}(n-1)!\kappa^d_n(p\boxtimes_d q)$$
and thus from  \eqref{baux}, we arrive to the desired identity.
\section{Truncation via interlacing}
\label{sec:truncation}

In this section we present the details of the truncation argument used in the proof of Theorem \ref{thm:weakconvergence}.  We start by recalling some elementary facts about interlacing polynomials. 

\begin{definition}[Interlacing and common interlacing] Let $p, q \in \monicpolreal$ and denote the roots of $p$ and $q$ by $\lambda_1(p) \leq \cdots \leq \lambda_d(p)$ and $\lambda_1(q)\leq \cdots\leq \lambda_d(q)$ respectively. We say that $q$ interlaces $p$ if
$$\lambda_1(p) \leq \lambda_1(q) \leq \lambda_2(p) \leq \lambda_2(q) \leq \cdots \leq  \lambda_d(p) \leq \lambda_d(q). $$
We say that $p$ and $q$ have a common interlacing if there exists an $r\in \monicpolreal$ that interlaces both $p$ and $q$.
\end{definition}

We will use that convex combinations of polynomials with a common interlacing are real-rooted (see \cite[Lemma 4.5 and proof of Lemma 4.2]{marcus2015interlacing}). 

\begin{lemma}
\label{lem:interlacing}
For $p, q\in \monicpolreal$, $p$ and $q$ have a common interlacing if and only if $tp+(1-t)q\in \monicpolreal$ for every $t\in [0, 1]$. 

Moreover, the $k$-th largest root of $tp+(1-t)q$ lies between the $k$-th largest roots of $p$ and $q$. In particular if $p, q\in \monicpolplus$ and $p$ and $q$ have a common interlacing then $tp+(1-t)q\in \monicpolplus$ for every $t\in [0, 1]$. 
\end{lemma}
 We can then conclude the following.
\begin{lemma}[Preservation of interlacing]
\label{lem:preservinginterlacing}
Take $p, \tilde{p}\in \monicpolreal$ and $q,\tilde{q}\in \monicpolplus$. If $\tilde{p}$ and $p$ have a common interlacing then $\tilde{p} \boxtimes_d q$ and  $p\boxtimes_d q$ have a common interlacing. And, if $\tilde{q}$ and $q$ have a common interlacing then $\tilde{q} \boxtimes_d p$ and  $q\boxtimes_d p$ have a common interlacing.
\end{lemma}

\begin{proof}
Assume that $\tilde{p}$ and $p$ have a common interlacing then, by Lemma \ref{lem:interlacing} $(tp+(1-t)\tilde{p})\in \monicpolreal$. Since multiplicative convolution by $q$ is linear, preserves real-roots and preserves the leading coefficient, then
$$t \tilde{p} \boxtimes_d q + (1-t) p\boxtimes_d q = (tp+(1-t)\tilde{p})\boxtimes_d q\in \monicpolreal \quad  \text{for every }t\in [0, 1].$$
Using Lemma \ref{lem:interlacing} again, this is equivalent to $\tilde{p} \boxtimes_d q $ and $p\boxtimes_d q$ having a common interlacing. 
but because $\tilde{p}$ interlaces $p$ by Lemma \ref{lem:interlacing} we have that $tp+(1-t)\tilde{p} \in \monicpolreal$, and hence  $(tp+(1-t)\tilde{p})\boxtimes_d q\in \monicpolreal$ holds. 

By a similar procedure, if $\tilde{q}$ and $q$ have a common interlacing then, by Lemma \ref{lem:interlacing} $(tq+(1-t)\tilde{q})\in \monicpolplus$. Since multiplicative convolution by $p$ is linear, preserves real-roots and preserves the leading coefficient, then
$$t \tilde{q} \boxtimes_d p + (1-t) q\boxtimes_d p = (tq+(1-t)\tilde{q})\boxtimes_d p\in \monicpolreal \quad  \text{for every }t\in [0, 1].$$
And the conclusion follows in the same way as the previous case.
\end{proof}

\begin{remark}
In a previous version we mistakenly stated Lemma \ref{lem:interlacing} with interlacing (instead of common interlacing). Based on that, the previous version of Lemma \ref{lem:preservinginterlacing} had the following false claim:\\

\emph{Let $p\in \monicpolreal$ and $q,\tilde{q}\in \monicpolplus$. If $\tilde{q}$ and $q$ interlace, then $\tilde{q} \boxtimes_d p$ and  $q\boxtimes_d p$ interlace.}\\

A simple counterexample can be constructed by considering $n=2$ and the polynomials $q(x)=(x-3)(x-0.2)$, $\tilde{q}(x)=(x-.21)(x-3.08621)$ and $p(x)=(x+1)(x-1)$.
\end{remark}

We are ready to present in detail the truncation argument used in the proof of Theorem \ref{thm:weakconvergence}. First recall the setup. We have two sequences $(p_d)_{d=1}^\infty$ and $(q_d)_{d=1}^\infty$ with $p_d \in \monicpolreal$ and $q_d\in \monicpolplus$, with root distributions converging weakly to $\mu$ and $\nu$ respectively. Assume that $M>0$ is such that the supports of $\mu$ and $\nu$ are contained in $[-M, M]$. 

\begin{proof}[Truncation argument in the proof of Theorem \ref{thm:weakconvergence}]
For every $d$ let $n^-(p_d)$ and $n^+(p_d)$ be the number of roots of $p_d$ in $(-\infty, -M]$ and $[M, \infty)$ respectively. Then, define $\hat{p}_d \in \monicpolreal$ to be the polynomial whose $d- n^-(p_d)-n^+(p_d)$ roots in $(-M, M)$ coincide with those roots of $p_d$ that are contained in $(-M, M)$, and set   $n^-(p_d)$ of the remaining roots of $\hat{p}_d$ to be equal to $-M$, while setting the other  $n^+(p_d)$ roots  of $\hat{p}_d$ to be equal to $M$.  Since the empirical root distributions of $p_d$ converge weakly to $\mu$, and $\mu([-M, M])=1$ we conclude that $\lim_{d\to \infty} n^+(p_d)/d = \lim_{d\to \infty} n^-(p_d)/d = 0$. So if  $n_d:= \max \{n^+(p_d), n^-(p_d)\}$ we know that $n_d = o(d)$. 

It follows from the preceding paragraph that the empirical root distributions of $\hat{p}_d$  also converge weakly to $\mu$. Now, for every $d$ we can find a sequence of polynomials $p_d^{(0)}, \cdots , p_d^{(n_d)}$ with $p_d^{(0)}=p_d, p_d^{(n_d)} =\hat{p}_d$ and such that $p_d^{(j+1)}$ and $p_d^{(j)}$ have a common interlacing for every $j=0, \dots, n_{d}-1$. The construction can be done recursively as follows, given a polynomial $p_d^{(i)}$, we let $\lambda_{-}$ be the largest root of $p_d^{(i)}$ in $(-\infty,-M)$ (if any) and let $\lambda_{+}$ be the smallest root of $p_d^{(i)}$ in $(M,\infty)$ (if any). Then $p_d^{(i+1)}$ is constructed with the same roots as $p_d^{(i)}$ except that $\lambda_{-}$ is changed for $-M$ and $\lambda_{+}$ is changed for $M$. Clearly $p_d^{(i)}$ interlaces $p_d^{(i+1)}$ and the values $n^-$, $n^+$ decrease for the new polynomial.

Then, by Lemma \ref{lem:preservinginterlacing}, for every $j=0, \dots, n_d-1$ we have that $p_d^{(j+1)}\boxtimes_d q_d$ and $p_d^{(j)}\boxtimes_d q_d$ have a common interlacing. In turn, this implies that the Kolmogorov distance between the root distributions of $p_d^{(j+1)}$  and $p_d^{(j)}$ (respectively $p_d^{(j)}\boxtimes_d q_d$ and $p_d^{(j+1)}\boxtimes_d q_d$) is less than $2/d$, and hence the Kolmogorov distance between the distributions of $p_d$ and $\hat{p}_d$ (respectively $p_d \boxtimes_d q_d $ and $\hat{p}_d \boxtimes_d q_d$ ) is less than $2n_d/d$. Since $n_d=o(d)$ we conclude that the root distributions of the sequences $(\hat{p}_d)_{d=1}^\infty$ and $(p_d)_{d=1}^\infty$ have the same weak limit (i.e. $\mu$), and similarly for the sequences $(\hat{p}_d\boxtimes q_d)_{d=1}^\infty$ and $(p_d\boxtimes q_d)_{d=1}^\infty$.  

We can define $\hat{q}_d$ in an analogous way and repeat the above arguments to show that the root distributions of $(\hat{q}_d)_{d=1}^\infty$ converge to $\nu$, and that the distributions of $(\hat{p}_d\boxtimes \hat{q}_d)_{d=1}^\infty$ and $(\hat{p}_d\boxtimes q_d)_{d=1}^\infty$ have the same weak limit. With this, the problem has been reduced to studying uniformly bounded families of polynomials. 
\end{proof}

\section{Enumerative proof of Lemma \ref{Cor:NCtoP}}
\label{appendix:alternativeproof}

In this section we will provide an alternative proof for the identity
\begin{equation}
\label{eq:identity}
 \sum_{\pi\in NC(n)} u_\pi v_{Kr(\pi)}=\frac{(-1)^{n-1}}{(n-1)!} \sum_{\substack{ \sigma, \tau\in P(n) \\  \sigma\lor \tau=1_n \\ \blocks{\sigma}+\blocks{\tau}=n+1}} \mu(0_n,\sigma)\mu(0_n,\tau) u_\sigma v_\tau .
\end{equation}

First let us clarify what we mean here by the type of a partition. If $\sigma \in \partlat(n)$ we define the type of $\sigma$ as the $n$-tuple $(s_1, \dots, s_n)$ where $s_i $ is the number of blocks of size $i$ in $\sigma$.\footnote{In the case of permutations we defined the type to be an integer partition rather than an $n$-tuple. Note that both notions are equivalent and only differ at a conceptual level.} Then, the total number of blocks in $\sigma$ equals $s_1+\cdots + s_n$ which we will denote by $\ip{s}$. 

The starting point of our proof is to note that the value of each term $u_\pi v_{Kr(\pi)}$  depends only on the types of $\pi$ and $Kr(\pi)$, and similarly the value of $\mu(0_n,\sigma)\mu(0_n,\tau) u_\sigma v_\tau $  only depends on the types of $\sigma$ and $\tau$. The idea then is that if we group the terms on both sides of (\ref{eq:identity}) by their type, the problem can be reduced to counting pairs of partitions that satisfy certain constraints. To be more precise we need to introduce some notation. 

\begin{definition}
Fix two types $s$  and $t$  with $\ip{s}+\ip{t}=n+1$. 
\begin{enumerate}[i)]
    \item Let $A(s, t)$ be the number of partitions $\pi\in \nc(n)$ of type $s$ such that $Kr(\pi)$ has type $t$. 
    \item Let $B(s, t)$ be the number of pairs $(\sigma, \tau )\in \partlat(n)\times \partlat(n)$ with $\sigma\vee \tau =1_n$ and such that $\sigma$ and $\tau$ have types $s$ and $t$ respectively. 
\end{enumerate}
\end{definition}
Then, the left and right sides of (\ref{eq:identity}) respectively become 
$$ \sum_{s, t} A(s, t) u_s v_t \qquad \text{and} \qquad \frac{(-1)^{n-1}}{(n-1)!} \sum_{s, t} B(s, t) \mu(0_n,s)\mu(0_n,t) u_s v_t,$$
where the sums are over all types $s$ and $t$, and $u_s$ denotes the value of $u_\sigma$ for any $\sigma$ of type $s$ and similarly we define $v_t, \mu(0_n, s)$ and $\mu(0_n, t)$.  So, to show that the above sums are equal, we will show that they coincide term by term, or equivalently we will show that
\begin{equation}
\label{eq:eqeq}
B(s, t) = (-1)^{n-1}\frac{(n-1)!A(s, t)}{\mu(0_n, s)\mu(0_n, t)} 
\end{equation}
for any  $s, t$ with $\ip{s}+\ip{t}=n+1$. 

Now,  from  \cite[Theorem 2.2]{goulden1992combinatorial} (see \cite[Remark 9.24]{nica2006lectures} for a reformulation of the result in our context) we know that
$$A(s, t)= \frac{n(\ip{s}-1)!(\ip{t}-1)!}{(s_1!\cdots s_n!)(t_1!\cdots t_n!)}.$$
Moreover,  we also have formulas for $\mu(0_n, s)$ and $\mu(0_n, t)$ (see (\ref{eq:formulaformu}) above). So, substituting the formulas for $A(s, t)$, $\mu(0_n, s)$ and $\mu(0_n, t)$, we see that proving (\ref{eq:eqeq}) (and therefore showing Lemma \ref{Cor:NCtoP}) boils down to proving the following lemma, which is the main result of this section. 

\begin{lemma}
For any types $s$ and $t$ with $\ip{s}+\ip{t}=n+1$ 
$$B(s, t) = \frac{n!(\ip{s}-1)!(\ip{t}-1)!}{(s_1!\cdots s_n!)(t_1!\cdots t_n!)(2!^{s_3} 3!^{s_4} \cdots(n- 1)!^{s_n})(2!^{t_3} 3!^{t_4} \cdots(n- 1!)^{t_n})}.$$
\end{lemma}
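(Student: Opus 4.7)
The plan is to interpret each pair $(\sigma,\tau)$ counted by $B(s,t)$ as an edge-labeled bipartite tree and then enumerate such trees. Given any such pair, form the bipartite multigraph $G(\sigma,\tau)$ whose vertices are the blocks of $\sigma$ on one side and the blocks of $\tau$ on the other, and in which every element $i\in[n]$ contributes an edge (labeled by $i$) connecting the block of $\sigma$ containing $i$ to the block of $\tau$ containing $i$. This graph has $\ip{s}+\ip{t}=n+1$ vertices and $n$ edges; the hypothesis $\sigma\vee\tau=1_n$ is exactly that it be connected; and these two facts together force $G(\sigma,\tau)$ to be a tree (in particular, no multiple edges can occur). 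Hence the pairs counted by $B(s,t)$ are in bijection with edge-labeled (by $[n]$) bipartite trees whose degree multisets on the two sides are precisely $s$ and $t$.

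With this reformulation in hand, the second step is to invoke the bipartite analogue of Cayley's formula: the number of labeled bipartite trees on $A\sqcup B$ with $|A|=m$, $|B|=k$ and prescribed degrees $d_a$ for $a\in A$ and $d_b$ for $b\in B$ is
$$\frac{(m-1)!\,(k-1)!}{\prod_{a\in A}(d_a-1)!\,\prod_{b\in B}(d_b-1)!}.$$
This is standard and admits a short proof via a bivariate Pr\"ufer code (repeatedly delete the smallest leaf on each side). In our setting $m=\ip{s}$ and $k=\ip{t}$, and since the degree multisets on the two sides are $s$ and $t$ the denominator becomes $\prod_i((i-1)!)^{s_i}\cdot\prod_j((j-1)!)^{t_j}$; the terms with $i,j\le 2$ equal $1$ and drop out, leaving exactly the factors appearing in the claimed formula.

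The final step is the bookkeeping that relates set partitions to labeled bipartite trees. Temporarily labeling the blocks of $\sigma$ by $[m]$ and those of $\tau$ by $[k]$ inflates $B(s,t)$ by the factor $m!\cdot k!$. Such a decorated pair is specified by (i) a sequence of block sizes of type $s$ on the $\sigma$-side (contributing $m!/\prod_i s_i!$ choices) and one of type $t$ on the $\tau$-side (contributing $k!/\prod_j t_j!$); (ii) a labeled bipartite tree with those prescribed degrees, counted by the formula above; and (iii) a bijective labeling of the $n$ edges by $[n]$, contributing a further factor of $n!$. Since the tree count in (ii) depends only on the multisets $s$ and $t$, it factors out of the summation over size sequences. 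Multiplying all the factors together and dividing by $m!\cdot k!$ yields precisely the expression in the statement.

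The only step that demands real care is (iii): one must verify that choosing an ordered pair of partitions with the given block-size sequences amounts to the same data as labeling the $n$ edges of the associated bipartite tree by $[n]$. This is straightforward, since an element $i\in[n]$ is uniquely encoded by the pair (block of $\sigma$ containing $i$, block of $\tau$ containing $i$), which is an edge of the tree, and conversely each edge of the tree is hit by exactly one element. Once this bijection is made explicit and the bipartite Cayley formula is accepted, the remainder is a single line of multinomial bookkeeping.
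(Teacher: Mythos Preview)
Your proof is correct and follows essentially the same route as the paper: biject pairs $(\sigma,\tau)$ (after ordering the blocks) with edge-labeled spanning trees of a complete bipartite graph via the rule ``element $i$ gives an edge between its $\sigma$-block and its $\tau$-block,'' then count such trees using the bipartite Pr\"ufer/Cayley formula for trees with prescribed degrees and sum over degree sequences of the given type. The organization and bookkeeping differ only cosmetically from the paper's Step~1/Step~2 presentation.
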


\begin{proof}
The proof consists of two steps. First we will show that $B(s, t) \ip{s}!\ip{t}!$ counts a certain family of labeled trees. Then we will compute the number of such trees. But first let us introduce some notation. 

By a \emph{sorted partition} we mean an element  in $ \partlat(n)$ that has been endowed with a linear ordering of its elements. E.g. $(\{ 1\},\{2\},\{3, 4\} )$ and $( \{2\}, \{1\}, \{3, 4\})$ are distinct sorted partitions that come from the same element in $\partlat(4)$. Let $\mathcal{P}_{s, t}^{\text{or}}$ be the set of pairs of sorted partitions $(\sigma, \tau)$ that satisfy that $\sigma$ and $\tau$ are of types $s$ and $t$ respectively, and $\sigma \vee \tau =1_n$.  Note that by definition
$$|\calP_{s, t}^{\text{or}}| = B(s, t) \ip{s}!\ip{t}!.$$
Let $K_{s, t}$ denote the complete bipartite graph with vertex components of size $\ip{s}$ and $\ip{t}$. Henceforth, we will set the convention that the vertices in the $s$-component of $K_{ s, t}$ are enumerated with the numbers in $\{1, \dots, \ip{s}\}$ and the vertices in the $\ip{t}$-component are enumerated with the numbers in $\{\ip{s}+1, \dots, \ip{s}+\ip{t}\}$. Let $\mathcal{T}_{s, t}$ denote the set of spanning trees of $K_{ s, t}$ whose edges have been labelled using the numbers from $1$ to $n$ without repeating labels (since $\ip{s}+\ip{t}=n+1$ note that each label is used exactly once), and such that the degree sequences of the $s$-component and $t$-component have types $s$ and $t$ respectively.  
\bigskip

\emph{Step 1: Bijecting  $\calP_{s, t}^{\text{or}}$ and $\mathcal{T}_{s, t}$. } We will now describe a reversible procedure that constructs a tree in $\mathcal{T}_{s, t}$ from a pair in $\calP_{s, t}^{\text{or}}$.   Take any  $(\sigma, \tau)\in \calP_{s, t}^{\text{or}}$ and note that since the blocks in $\sigma $ and $\tau$ are sorted, it is valid to say that we assign the $i$-th block in $\sigma$ to the vertex $i$ in the $s$-component of $K_{s, t}$ and the $j$-th block $\tau$  to the vertex $\ip{s}+j$ in its $t$-component. Then, construct an edge-labelled subgraph $T_{(\sigma, \tau)}$ of $K_{s, t}$ as follows: put an edge with label $i$ between $V\in \sigma$ and $W\in \tau$ if and only if $V\cap W =\{i\}$. See Figure \ref{fig:tree} for an example.

 \begin{figure}[h]
  \centering
\includegraphics[scale=.4]{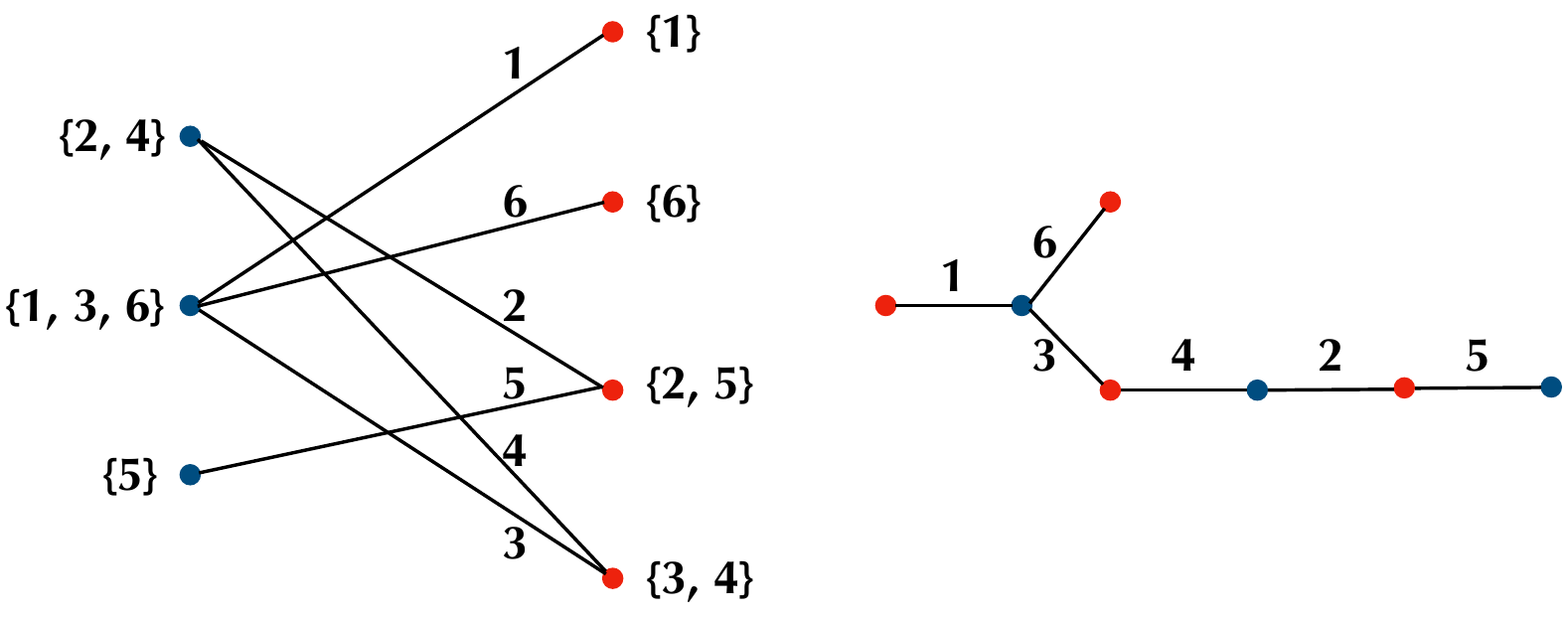}
\caption{Here we show the edge labeled spanning tree of $K_{3, 4}$ associated to the pair of sorted partitions $\sigma= (\{2, 4\}, \{1, 3, 6\}, \{5\})$ and $\tau = (\{1\}, \{6\}, \{2, 5\}, \{3, 4\})$.    }
 \label{fig:tree}
\end{figure}

Note that because $\ip{s}+\ip{t}=n+1$, by construction, $T_{(\sigma, \tau)}$ has $n+1$ vertices, $n$ edges, and the degree sequences of its bipartite components have types $s$ and $t$ respectively. Furthermore, the condition $\sigma \vee \tau=1_n$ implies that $T_{(\sigma, \tau)}$ is connected, hence $T_{(\sigma, \tau)}$ is a (spanning) tree (of $K_{s, t}$) whose edges are labelled with the numbers from 1 to $n$, that is $T_{(\sigma, \tau)}\in \mathcal{T}_{s, t}$. 

To reverse this procedure, start with $T \in \mathcal{T}_{s, t}$, and note that we can construct pair $(\sigma_T, \tau_T)$ of ordered partitions by defining the $i$-th block of $\sigma$ to be the set of the numbers assigned to the edges coming out of the $i$-th vertex in $s$ component of $K_{s, t}$ and similarly for $\tau$.  By construction it is clear that $(\sigma_T, \tau_T)\in \calP_{s, t}^{\text{or}}$ and that if we apply the procedure defined above to this pair we would obtain $T$. 
\bigskip

\emph{Step 2: Counting $|\mathcal{T}_{s, t}|$.} First we observe that we can count the number of spanning trees of $K_{s, t}$ with any prescribed degree subsequence. Indeed, if  $$\vec{d} = (d_1, \dots, d_{\ip{s}}; d_{\ip{s}+1}, \dots, d_{\ip{s}+\ip{t}} )$$
is a fixed degree sequence, one can use Pr\"ufer's trick to show that the set of spanning trees with degree sequence $\vec{d}$ is in 1-1 correspondence with the set of pairs of sequences $(\ell_s, \ell_t )$ satisfying that $\ell_s$ uses  numbers from $\ip{s}+1$ to $\ip{s}+\ip{t}$ and $\ell_t$ uses  numbers from $1$ to $\ip{s}$, and  each number $i$ appears exactly $d_i-1$ times in its corresponding sequence (we refer the reader to \cite[Pages 341-342]{hartsfield1990spanning} for a detailed description on how this is done). On the other hand it is clear that the number of pairs of sequences $(\ell_s, \ell_t)$ with these properties is 
$$\binom{\ip{s}-1} {d_1-1, \dots, d_{\ip{s}}-1 } \binom{\ip{t}-1}{ d_{\ip{s}+1}-1, \dots, d_{\ip{s}+\ip{t}-1}-1}. $$
Now note that the number of degree sequences of type $(s, t)$ is $\frac{\ip{s}!\ip{t}!}{p_sp_t}$ where $p_s:=s_1!\cdots s_n!$ and $p_t:=t_1!\cdots t_n!$. Hence, the number of spanning trees of $K_{s, t}$ with degree sequences of type $(s, t)$ is the product of the two aformentioned quantities, i.e.
$$S_{s, t}:=\frac{\ip{s}! \ip{t}! (\ip{s}-1)!(\ip{t}-1)!)}{p_s p_t (2!^{s_3} 3!^{s_4} \cdots(n- 1)!^{s_n})(2!^{t_3} 3!^{t_4} \cdots(n- 1)!^{t_n}) }. $$
Since $\mathcal{T}_{s, t}$ is the set of trees with the above properties, but also with labelled edges, we have $|\mathcal{T}_{s, t}| = n! S_{s, t} $. 
\bigskip

Finally, combining the discussions in steps 1 and 2 we get 
$$B(s, t) = \frac{|\calP_{s, t}^{\text{or}}|}{\ip{s}!\ip{t}!} = \frac{|\mathcal{T}_{s, t}|}{\ip{s}!\ip{t}!} = \frac{n! S_{s, t}}{\ip{s}!\ip{t}!} $$
and the proof is concluded. 
\end{proof}

\end{appendix}

\end{document}